\def\D{D}
\def\H{H}
\def\d{\,\mathrm{d}}
\begin{document}

\theoremstyle{plain}
\theoremstyle{plain}
\newtheorem{Pp}{Proposition}[section] 
	\newtheorem{Thm}[Pp]{Theorem} 
	\newtheorem*{ThmWithout}{Theorem} 
	\newtheorem{Lm}[Pp]{Lemma}
	\newtheorem{Cor}[Pp]{Corollary}
	\newtheorem*{E1}{Assumption (E1)}
	\newtheorem*{E2}{Assumption (E2)}
	\newtheorem*{E3}{Assumption (E3)}
	\newtheorem*{E4}{Assumption (E4)}
	\newtheorem*{E5}{Assumption (E5)}
\theoremstyle{definition}			
	\newtheorem*{DynAn}{The analytical dynamics (A)}
	\newtheorem*{DynAltern}{Sufficient analytical and stochastic dynamics (A)' and (S)'}
  \newtheorem*{DynSt}{The stochastic dynamics (S)}
	\newtheorem{Df}[Pp]{Definition}
	\newtheorem{Ex}[Pp]{Example}
	\newtheorem*{Ob}{Observation and Interpretation}
	\newtheorem{Cond}[Pp]{Condition}
	\newtheorem*{Ass}{Assumption}
	\newtheorem{Rm}[Pp]{Remark}
	\newtheorem{Persp}[Pp]{Remark} 
	\newtheorem{Not}[Pp]{Notation}
	\newtheorem{Int}[Pp]{Motivation and Interpretation}	
	\newtheorem{Int2}[Pp]{Interpretation}	
	\newtheorem*{Pot}{Dynamical system conditions (C0)}
	\newtheorem*{Ass2}{Ergodicity assumptions (C1)-(C3)}
	\newtheorem*{Ass3}{Hypocoercivity assumptions (C1)-(C3)}
	\newtheorem*{Ass4}{Ergodicity assumptions (C0)'-(C3)'}

\def\D{D}
\def\H{H}
\def\d{\,\mathrm{d}}
\def\BreiteTabelle{13.13cm} 
\def\Breite{1.25ex}
\def\BreiteZwei{-1.6ex}
\def\BreiteDrei{-3.5ex}
\def\df{:=}
\DeclareRobustCommand*{\normone}{|}
\DeclareRobustCommand*{\normtwo}{\|}

\title[Ergodicity for singular degenerate Kolmogorov equations]{A hypocoercivity related ergodicity method with rate of convergence for singularly distorted degenerate Kolmogorov equations and applications}

\author{Martin Grothaus}
\address{Martin Grothaus, Mathematics Department, University of Kaiserslautern, \newline 
P.O.Box 3049, 67653 Kaiserslautern, Germany. {\rm \texttt{Email:~grothaus@mathematik.uni-kl.de}},\newline
Functional Analysis and Stochastic Analysis Group, \newline
{\rm \texttt{URL:~http://www.mathematik.uni-kl.de/fuana/ }}} 

\author{Patrik Stilgenbauer}
\address{
Patrik Stilgenbauer, Mathematics Department, University of Kaiserslautern, \newline
P.O.Box 3049, 67653 Kaiserslautern, Germany. 
{\rm \texttt{Email:~stilgenb@mathematik.uni-kl.de}}, \newline
Functional Analysis and Stochastic Analysis Group, \newline
{\rm \texttt{URL:~http://www.mathematik.uni-kl.de/fuana/ }}}

\date{\today}

\subjclass[2000]{Primary 37A25; Secondary 58J65}

\keywords{Ergodicity; Rate of convergence; Degenerate diffusions; Singularly distorted diffusions; Kolmogorov backward equation; Hypocoercivity; Operator Semigroups; Generalized Dirichlet Forms;  Hypoellipticity; Poincar\'e inequality; N-particle Langevin dynamics; Spherical velocity Langevin dynamics;  Fiber lay-down;  Stratonovich SDEs on manifolds; Fokker-Planck equation}

\begin{abstract}
In this article we develop a new abstract strategy for proving ergodicity with explicit computable rate of convergence for diffusions associated with a degenerate Kolmogorov operator $L$. A crucial point is that the evolution operator $L$ may have singular and nonsmooth coefficients. This allows the application of the method e.g.~to degenerate and singular particle systems arising in Mathematical Physics. As far as we know in such singular cases the relaxation to equilibrium can't be discussed with the help of existing approaches using hypoellipticity, hypocoercivity or stochastic Lyapunov type techniques. The method is formulated in an $L^2$-Hilbert space setting and is based on an interplay between Functional Analysis and Stochastics. Moreover, it implies an ergodicity rate which can be related to $L^2$-exponential convergence of the semigroup. Furthermore, the ergodicity method shows up an interesting analogy with existing hypocoercivity approaches. In the first application we discuss ergodicity of the $N$-particle degenerate Langevin dynamics with singular potentials.  The dual to this equation is also called the kinetic Fokker-Planck equation with an external confining potential. In the second example we apply the method to the so-called (degenerate) spherical velocity Langevin equation which is also known as the fiber lay-down process arising in industrial mathematics.
\end{abstract}

\maketitle
\section{Introduction} \label{Section_Introduction_Ergodicity}

Studying the decay to equilibrium of degenerate kinetic equations or diffusions is still an active and demanding mathematical research area lying in between modern Stochastics and Functional Analysis. Especially in the last decade, many results concerning the exponential relaxation to equilibrium of the \textit{kinetic (degenerate) Fokker-Planck equation with an external confining potential} have been obtained in case the underlying potential is sufficiently smooth and nonsingular. For analytical approaches see e.g.~\cite{HeNi04}, \cite{HN05}, \cite{HeNi06}, \cite{Her07}, \cite{Vil09}, \cite{Dua11}, \cite{DMS13} or \cite{GS12B}. Therein tools from hypoellipticity and hypocoercivity are applied to the previous equation as well as to other kinetic models implying an exponential rate of convergence. Since the kinetic (degenerate) Fokker-Planck equation describes the evolution of the probability density of the \textit{Langevin equation}, also stochastic approaches are available for studying the exponential longtime behavior of this dynamics directly. The interested reader is referred e.g.~to \cite{MS02} or \cite{Wu01} where methods based on stochastic Lyapunov type techniques are used. Moreover, consider article \cite{BCG08} in which extended Lyapunov-Poincar\'{e} inequalites are developed and applied to the kinetic Fokker-Planck equation. For further studies about the longtime behavior of this dynamics see also article \cite{Bau13} which is based on generalized Bakry-\'{E}mery conditions.

However, in Statistical Mechanics and Mathematical Physics the underlying potential in the Langevin dynamics is usually of Lennard-Jones type. Hence it is singular and analyzing the decay to equilibrium can't be discussed with the abovementioned methods. Suitable tools to handle this situation are provided in \cite{CG10}. In the latter article ergodicity with a rate of convergence for the so-called \textit{N-particle Langevin dynamics with singular potentials} is established. As explained in \cite{CG10}, exponential convergence of the $N$-particle Langevin semigroup in $L^2$ would imply such an ergodicity rate. In this sense, the $N$-particle Langevin dynamics is called exponentially ergodic therein. The method used in \cite{CG10} is based on the strategy from \cite{GK08} in which ergodicity with rate of convergence of the so-called two-dimensional fiber lay-down process is proven.

In the underlying article we aim to generalize the method from \cite{GK08} and \cite{CG10} to an abstract setting. We develop a new abstract method in a Hilbert space framework which is suitable for proving ergodicity with explicit computable rate of convergence for various diffusion processes associated with a degenerate non-coercive Kolmogorov (backward) evolution operator $L$.  Again the generator may have singular and nonsmooth coefficients. As in the theory of hypocoercivity, it decomposes into a symmetric dissipative part $S$ and an antisymmetric conservative term $A$ such that the combination of both implies the phenomenon of relaxation to equilibrium. Our method applies to evolution equations with order of degeneracy equal to one, i.e., $S$ usually acts in the velocity and $A$ mixes space and velocity variables. We mention that the method is further based on an interplay between Functional Analysis and Stochastics and relies on martingale methods. While the analytical part of the dynamics can be constructed using techniques from the theory of operator semigroups, the existence of the stochastic part of the dynamics can usually be guaranteed in the applications via the theory of generalized Dirichlet-forms (see \cite{Fuk80}, \cite{Fuk94}, \cite{MR92}, \cite{St99} or \cite{Tru05}) or by using tools from \cite{BBR06}. As in \cite{CG10} the ergodicity rate can again be related to exponential convergence of the semigroup in an $L^2$-space and the convergence to equilibrium of the underlying dynamics may therefore reasonably be called $L^2$-exponentially ergodic, see Remark \ref{Rm_comparison_ergodicity_exponenential_rate}.

Afterwards, we discuss the application of our abstract method. At first, we show how the $N$-particle Langevin dynamics from \cite{CG10} fits into our setting. The equation is introduced in detail below. As consequence, we get back the ergodicity result derived in  \cite[Sec.~4.2]{CG10} and obtain a further specification for the rate of convergence. In the second example we apply the method to the generalized version of a fiber lay-down process. The latter is introduced in \cite{KMW12} and in \cite{GKMS12}. Consider these articles as well as \cite{GKMW07}, \cite{GS12}, and \cite{GS12B} for further motivation and the industrial application of this model. As explained in \cite{GS12} we remark that the generalized fiber lay-down dynamics can alternatively be seen as the analogue of the Langevin equation for a particle moving with velocities of constant Euclidean norm. For this model we then finally obtain again ergodicity with an explicit rate of convergence. In particular, we are able to generalize the result from \cite{GK08} to arbitrary dimensions.

It is interesting to note that we require conditions in the abstract setting similar to the assumptions made in the hypocoercivity setting of Dolbeault, Mouhot and Schmeiser (see \cite[Sec.~1.3]{DMS13}) which has itself been extended later on by the authors of the underlying article, see \cite{GS12B}. Moreover, the hypocoercivity approach is stronger than the ergodicity method in the sense that it implies an exponential rate of convergence of the semigroup in the $L^2$-space directly; the ergodicity approach instead describes the time averages of the dynamics. However, the mentioned hypocoercivity setting (and other similar ones) do not apply in such singular situations so far, see Remark \ref{Rm_comparison_ergodicity_exponenential_rate} below. Moreover, up to the best of the author's knowledge, the approach developed in the underlying article seems to be the first abstract ergodicity method in the existing literature which allows to cover relevant particle systems with singular interactions arising for instance in Mathematical Physics. Although the conditions in the abstract hypocoercivity method mentioned above are similar to the conditions required in our abstract ergodicity method, we emphasize that both methods are complementary to each other.

This paper is organized as follows. In Section \ref{section_Results} we present the ergodicity method in the abstract setting and discuss the two applications mentioned above. All proofs, however, are postponed to Section \ref{Regular sub-Markovian semigroups and associated laws}. Therefore, basic definitions and notations for understanding the framework are shortly explained within Section \ref{section_Results}. Further details and complete definitions are then given in Section \ref{Regular sub-Markovian semigroups and associated laws} in full detail. We finally mention that the results in this article are obtained from the PhD thesis of the second named author; see \cite[Ch.~3]{Sti14}.

Altogether, the main results obtained in this article are summarized as follows.
\begin{itemize} 
\item Developing a new abstract strategy for proving ergodicity with an explicit rate of convergence for diffusions associated with a degenerate non-coercive Kolmogorov evolution operator $L$. The important point is that $L$ may have singular nonsmooth coefficients, see Theorem \ref{main_ergodicity_theorem_with_rate_of_convergence} and Theorem \ref{Cor_main_ergodicity_theorem_with_rate_of_convergence}.
\item The method applies to degenerate and singular particle systems\index{singular particle system, singular\\$N$-particle Langevin dynamics,\\singular diffusion}\footnote{When using the expression singular particle system (or analogously singular $N$-particle Langevin dynamics or singular diffusion) the word singular refers to a singular interaction potential.} arising in Mathematical Physics. Methods in the existing literature (e.g.~based on hypoellipticity, hypocoercivity or Lyapunov type techniques) do not apply in this situation so far.
\item Applying the framework to the $N$-particle Langevin dynamics with singular interaction potentials, see Theorem \ref{Ergodicity_theorem_N_particle_Langevin}. This reproduces the result from \cite{CG10} and provides a further specification for the rate of convergence.
\item Applying the method to the fiber lay-down dynamics, see Theorem \ref{Ergodicity_theorem_generalized_fiber_lay_down}. This generalizes the result from \cite{GK08} to arbitrary dimensions.
\end{itemize}

\section{Overview of the results} \label{section_Results}

As described in the introduction, in this section we present all our results. We start with the abstract strategy for proving ergodicity (with explicit computable rate of convergence) for singular degenerate Kolmogorov diffusions and discuss the applications afterwards. All proofs are postponed to Section \ref{Regular sub-Markovian semigroups and associated laws}. Basic definitions and notations for understanding the whole framework are shortly explained within the underlying section. Further details and complete definitions are then given in Section \ref{Regular sub-Markovian semigroups and associated laws}. 

\subsection{The abstract ergodicity method} \label{Results_abstract_ergodicity_method}

Some comments on the notations: In the following, all considered operators are assumed to be linear.  We assume that the reader is familiar with basic definitions and statements concerning the theory of operator semigroups. Beautiful references on the subject are e.g.~\cite{Paz83} or \cite{Gol85}. Below a strongly continuous contraction semigroup is always abbreviated by s.c.c.s. 
Now, as mentioned in the introduction, the ergodicity method presented below is the generalization of the method from \cite{GK08} (and from \cite{CG10}) to an abstract setting.\index{$L$, $S$, $A$, $\hat{L}$, $D$, $P$, $\mu$, $\mathbb{P}$, $\hat{\mathbb{P}}$ in\\ergodicity setting}

\begin{DynAn} \index{(A) Analytic dynamics ergodicity\\method}
We require the following conditions for the analytic part of the dynamics, i.e., the underlying Kolmogorov operator. \\[\Breite]
\begin{tabularx}{\BreiteTabelle}{lX}
(A1) & \textit{State space:} $E$ is a separable metric space equipped with its Borel $\sigma$-algebra $\mathcal{B}(E)$. Let $\mu$ be a probability measure on $(E,\mathcal{B}(E))$. The Hilbert space $H$ is defined as
\begin{align*}
H\df L^2(E,\mu)
\end{align*}
endowed with the usual scalar product $(\cdot,\cdot)_H$ and induced norm  $\|\cdot\|_H$ or $\|\cdot\|$.\\[\Breite]
(A2) & \textit{The Kolmogorov generator:}  Let $D \subset H$ be a dense linear subspace of $H$ which is an algebra. Let $(L,D)$ be a linear operator on $H$ of the form 
\begin{align*}
L=S-A \quad \mbox{on }D
\end{align*}
where $(S,D)$ is a symmetric and nonpositive definite operator and $(A,D)$ is an antisymmetric operator on $H$.\\[\Breite]
(A3) & \textit{Invariant measure:} Let $\mu$ be an invariant measure for $(L,D)$ and for $(\hat{L},D)$.\index{invariant measure} This means that 
\begin{align*} 
\int_E Lf\, \mathrm{d}\mu=0=\int_E \hat{L}f\, \mathrm{d}\mu \quad \mbox{for all $f \in \D$}. \smallskip
\end{align*} \\[\BreiteDrei]
\end{tabularx}
\end{DynAn}

Above $(\hat{L},D)$ with $\hat{L}=S+A$ denotes the adjoint of $(L,D)$ on $D$ in $H$. Via assuming (A2) note that (A3) equivalently means that $\mu$ should be invariant for $(S,D)$ and invariant for $(A,D)$. Note that the previously introduced operators with predomain $D$ are closable, since they are densely defined and dissipative. The closures\index{$(L,\overline{D}^L)$, $(S,\overline{D}^S)$, $(A,\overline{D}^A)$ etc.} of these operators on $H$ with predomain $D$ are denoted by 
\begin{align*}
(\hat{L},\overline{D}^{\hat{L}}),~~(L,\overline{D}^L), ~~(S,\overline{D}^S)~~\mbox{and}~~(A,\overline{D}^A).
\end{align*}
Furthermore, the orthogonal projection on the kernel $\mathcal{N}(S)$ of $(S,\overline{D}^S)$ is denoted by
\begin{align*}
P \colon H \to \mathcal{N}(S).
\end{align*}


\phantomsection\label{pageref_Conditions_analytic_A} Before introducing the assumptions concerning the underlying stochastic part of the dynamics we need some more notations, see also Section \ref{subsection_laws} and Section \ref{subsection_laws_and_associated_semigroups} for further details. So, assume the situation from (A). Let $\mathbb{P}$ be a probability law\index{probability law $\mathbb{P}$} on $C([0,\infty);E)$, this is, a probability measure on $C([0,\infty);E)$ where $C([0,\infty);E)$\index{$C([0,\infty);E)$} denotes the space of continuous paths on $[0,\infty)$ taking values in $E$. Assume that $\mathbb{P}$ admits $\mu$ as invariant measure, i.e., $\mathbb{P} \circ X_t^{-1}=\mu$\index{invariant measure of a law} for all $t \geq 0$ where $X_t$ denotes the evaluation of paths at time $t$. For each $f \in D$ introduce $M^{[f],L}\df(M_t^{[f],L})_{t \geq 0}$\index{$M^{[f],L}$} by
\begin{align*}
M_t^{[f],L}\df f(X_t)-f(X_0) - \int_0^t Lf(X_s) \d s \quad \mbox{for all }t \geq 0.
\end{align*}
Furthermore, define $N^{[f],L}\df(N_t^{[f],L})_{t \geq 0}$\index{$N^{[f],L}$} by
\begin{align*}
N_t^{[f],L}\df\left(M_t^{[f],L}\right)^2 - 2 \int_0^t \Gamma_L(f,f)(X_s) \d s\quad \mbox{for all } t \geq 0,~f \in D.
\end{align*}
Here the \textit{carr\'{e} du champ}\index{carr\'{e} du champ/ square\\field operator $\Gamma_L$} $\Gamma_L \colon D \times D \to L^1(\mu)$ (or the \textit{square-field operator}) is given by 
\begin{align*}
\Gamma_L(f,g) \df \frac{1}{2} \left(L(fg) - f Lg - g Lf\right) \quad  \mbox{for all }f,g \in D.
\end{align*}
We remark that $g(X_t)$ and $\int_0^t g(X_s)\, \mathrm{d}s$ are $\mathbb{P}$-a.s.~well-defined (i.e., independent of the $\mu$-version one chooses for g), $\mathbb{P}$-integrable and $\mathcal{F}^0_t$-measurable for each $g \in L^1(\mu)$ and each $t \geq 0$, see Lemma \ref{Lm_well_definedness_integrability} below. Here $(\mathcal{F}^0_t)_{t \geq 0}$ is the elementary filtration\index{elementary filtration $(\mathcal{F}_t^0)_{t \in I}$} generated by the paths. Thus all the terms in $M_t^{[f],L}, N_t^{[f],L}$ are in particular $\mathbb{P}$-a.s.~well-defined for $f \in D$, $t \geq 0$. Moreover, it follows that $M^{[f],L}, N^{[f],L}$ are $(\mathcal{F}^0_t)_{t \geq 0}$-adapted, $\mathbb{P}$-integrable and $M^{[f],L}$ is even square integrable for all $f \in D$, see again Lemma \ref{Lm_well_definedness_integrability}. Finally, the same definitions can be introduced and the same properties are satisfied in case $(L,D)$ is replaced by $(\hat{L},D)$ above. Now the stochastic assumption reads as follows.

\begin{DynSt} In\index{(S) Stochastic dynamics ergodicity\\method} the situation from (A) we assume the following condition.\\[\Breite]
\begin{tabularx}{\BreiteTabelle}{lX}
(S) & \textit{Stochastic dynamics and the martingale problem:} Let $\mathbb{P}$ and $\hat{\mathbb{P}}$ be probability laws on $C([0,\infty);E)$ having $\mu$ as invariant measure such that 
\begin{align*}
\hat{\mathbb{P}}_{T} = {\mathbb{P}}_{T} \circ \tau_T^{-1}\quad  \mbox{for all } T \geq 0.
\end{align*}
Assume that $N^{[f],L}$ is an $(\mathcal{F}^0_t)_{t \geq 0}$-martingale under $\mathbb{P}$ and that $N^{[f],\hat{L}}$ is an $(\mathcal{F}^0_t)_{t \geq 0}$-martingale under $\hat{\mathbb{P}}$ for all $f \in D$.
\end{tabularx}
\end{DynSt} 

Here $\mathbb{P}_{T}$\index{$\mathbb{P}_T$} and $\hat{\mathbb{P}}_{T}$ denote the image laws of $\mathbb{P}$ and $\hat{\mathbb{P}}$, respectively, w.r.t.~the restriction of paths to $C([0,T];E)$ for $T \geq 0$ and $\tau_T$ is the time-reversal\index{time-reversal $\tau_T$} on $C([0,T];E)$, see Section \ref{Regular sub-Markovian semigroups and associated laws} for definitions. With the previous assumptions at hand one obtains the following corollary. Expectation w.r.t.~$\mathbb{P}$ and $\hat{\mathbb{P}}$ is denoted by $\mathbb{E}$ and $\hat{\mathbb{E}}$, respectively. 

\begin{Cor} \label{Pp_computation_quadratic_variation} Assume the situation from (A) and (S). Let $T \geq 0$. Then 
\begin{align*}
\mathbb{E}\left[ \left( M_t^{[f],L} \right)^2 \right] = -2 \,t\, \left(Sf,f\right)_H = \hat{\mathbb{E}}\left[ \left( M_t^{[f],\hat{L}} \right)^2 \right] \quad \mbox{for all } f \in D \mbox{ and all }t \geq 0. 
\end{align*}
\end{Cor}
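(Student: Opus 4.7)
The plan is to exploit the martingale property of $N^{[f],L}$ together with the invariance of $\mu$ and the symmetry/antisymmetry decomposition $L = S - A$. First, since $N^{[f],L}$ is an $(\mathcal{F}_t^0)_{t \geq 0}$-martingale under $\mathbb{P}$ and $M_0^{[f],L} = 0$, taking expectations gives
\begin{align*}
\mathbb{E}\!\left[ \left( M_t^{[f],L} \right)^2 \right] = 2\, \mathbb{E}\!\left[ \int_0^t \Gamma_L(f,f)(X_s) \d s \right].
\end{align*}

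Next, I would use Fubini (justified by the $\mathbb{P}$-integrability facts in Lemma \ref{Lm_well_definedness_integrability} applied to $g = \Gamma_L(f,f) \in L^1(\mu)$; note that $D$ being an algebra ensures $f^2 \in D$, so $\Gamma_L(f,f)$ is well defined and in $L^1(\mu)$ via invariance) and the fact that $\mathbb{P} \circ X_s^{-1} = \mu$ to rewrite
\begin{align*}
\mathbb{E}\!\left[ \int_0^t \Gamma_L(f,f)(X_s) \d s \right] = t \int_E \Gamma_L(f,f) \d \mu.
\end{align*}

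The key computation is then to identify this integral with $-(Sf,f)_H$. From the definition of the carr\'e du champ and the invariance of $\mu$ for $(L,D)$ applied to $f^2 \in D$,
\begin{align*}
\int_E \Gamma_L(f,f) \d \mu = \tfrac{1}{2} \int_E L(f^2) \d \mu - \int_E f\, Lf \d \mu = -(Lf, f)_H.
\end{align*}
Decomposing $L = S - A$ with $A$ antisymmetric yields $(Af,f)_H = 0$, hence $(Lf,f)_H = (Sf,f)_H$, giving the first equality. The same argument with $(\hat L, D)$ in place of $(L,D)$ (using that $\mu$ is invariant for $\hat L$ and that $N^{[f],\hat L}$ is an $(\mathcal{F}_t^0)$-martingale under $\hat{\mathbb{P}}$) produces $\hat{\mathbb{E}}[(M_t^{[f],\hat L})^2] = -2t(\hat L f, f)_H = -2t(Sf,f)_H$, since $A$ remains antisymmetric.

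I do not expect any real obstacle: the statement is essentially a direct unpacking of the hypotheses. The only technical points requiring care are (i) confirming that $f^2 \in D$ so that invariance can be applied to $L(f^2)$ (immediate from $D$ being an algebra), and (ii) justifying Fubini and the replacement of $\mathbb{E}[g(X_s)]$ by $\int g \d \mu$ for $g = \Gamma_L(f,f)$, which is covered by the referenced Lemma \ref{Lm_well_definedness_integrability} combined with the invariance assumption in (A3) and (S).
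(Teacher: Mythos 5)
Your proof is correct and follows essentially the same route as the paper's: use the martingale property of $N^{[f],L}$ to reduce to $2\,\mathbb{E}\bigl[\int_0^t \Gamma_L(f,f)(X_s)\,\mathrm{d}s\bigr]$, apply Fubini and $\mathbb{P}\circ X_s^{-1}=\mu$, identify $\int_E \Gamma_L(f,f)\,\mathrm{d}\mu = -(Lf,f)_H$ via invariance (A3) applied to $f^2\in D$, and conclude by antisymmetry of $A$. The paper's proof is identical in structure, merely slightly less explicit about unfolding the carr\'e du champ definition.
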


This corollary is proven on page \pageref{proof_Corollary_quadratic_variation}. We remark that it seems also natural to assume additionally that $M^{[f],L}$ and $M^{[f],\hat{L}}$, $f \in D$, are $(\mathcal{F}^0_t)_{t \geq 0}$-martingales under $\mathbb{P}$, or $\hat{\mathbb{P}}$ respectively. However, this property is not used in the proofs for the abstract setting below and is therefore not explicitly required. Nevertheless, we emphasize that one usually needs it in order to be able to verify that $N^{[f],L}$ and $N^{[f],\hat{L}}$ are indeed martingales in concrete applications; we further remark that these martingale problems are usually satisfied in case the Kolmogorov operator $L$ is associated with a manifold-valued Stratonovich SDE. However, such a classical stochastic approach requires smooth or at least continuous type assumptions on the coefficients of the operator $L$. Since we are interested in more general situations, in particular in singularly distorted diffusions, we choose another approach which is heavily based on Functional Analysis. Therefore, the laws $\mathbb{P}$ and $\hat{\mathbb{P}}$ are constructed in our applications via using modern tools from the theory of generalized Dirichlet forms or by using existence results from \cite{BBR06}. In these cases the laws are associated with a conservative sub-Markovian s.c.c.s. Such an associatedness property then implies that the martingale problem is fulfilled automatically, see Theorem \ref{Thm_Martingale_Problem_via_regular semigroups} below. Before stating it, let us introduce the following sufficient conditions.

\begin{DynAltern} \index{(A)', (S)' alternative dynamics\\ergodicity method}
{~}\\[\Breite]
\begin{tabularx}{\BreiteTabelle}{lX}
(A)' & \textit{The semigroup:} Assume the situation of (A1) and (A2) and let $(T_{t,2})_{t \geq 0}$ be a $\mu$-invariant sub-Markovian s.c.c.s.~on $H$ (which is then also conservative and regular) with associated generator $(L_2,D(L_2))$ which extends $(L,D)$ and further assume that its adjoint $(\hat{L}_2,D(\hat{L}_2))$ on $H$ extends $(\hat{L},D)$. \\[\Breite]
(S)' & \textit{Stochastic dynamic and associatedness with the semigroup:} Let $\mathbb{P}$ and $\hat{\mathbb{P}}$ be probability laws on $C([0,\infty);E)$ such that $\hat{\mathbb{P}}_{T} = {\mathbb{P}}_{T} \circ \tau_T^{-1}$ for all $T \geq 0$ and assume that $\mathbb{P}$ is associated with $(T_{t,2})_{t \geq 0}$.
\end{tabularx}
\end{DynAltern}

In the previous assumptions sub-Markovian\index{semigroup!sub-Markovian} means that $0 \leq T_{t,2}f \leq 1$ for all $t \geq 0$ whenever $0 \leq f \leq 1$ and conservativity\index{semigroup!conservative} means that $T_{t,2}1=1$ for all $t \geq 0$. Moreover, $\mu$-invariance\index{semigroup!$\mu$-invariance} is defined as\label{Def_conservativity_mu_invariance}
\begin{align*}
\mu(T_{t,2}f) = \mu(f) \quad \mbox{for all } t \geq 0 \mbox{ and all }f \in L^2(E,\mu).
\end{align*}
Moreover, regularity means that the associated adjoint s.c.c.s.~on $H$ is assumed to be sub-Markovian as well and finally, associatedness of $\mathbb{P}$ w.r.t.~$(T_{t,2})_{t \geq 0}$ means that for all nonnegative $f_1,\ldots,f_n \in L^\infty(E,\mu)$, $0 \leq t_1 \leq \cdots \leq t_n$, $n \in \mathbb{N}$, it holds\index{associatedness!semigroup and law}
\begin{align*}
\mathbb{E}\left[f_1(X_{t_1}) \cdots f_n(X_{t_n})\right]=\mu\left( T_{t_1,2} \left( f_1 T_{t_2-t_1,2} \left(f_2 \cdots T_{t_{n-1}-t_{n-2},2} \left( f_{n-1} T_{t_{n}-t_{n-1},2} f_n\right)\right)\right)\right).
\end{align*}
Let us mention that a law which is associated with the semigroup $(T_{t,2})_{t \geq 0}$ from (A)' is already unique. Now one obtains the following theorem. For the proof see page \pageref{proof_Theorem_A_strich_und_s_strich}.

\begin{Thm} \label{Thm_Martingale_Problem_via_regular semigroups}
Assume the situation from (A)' and (S)'. Then Condition (A3) is fulfilled, $\hat{\mathbb{P}}$ is associated with the dual semigroup $(\hat{T}_{t,2})_{t \geq 0}$ of $(T_{t,2})_{t \geq 0}$ on $H$ (which is also a regular conservative $\mu$-invariant sub-Markovian s.c.c.s.) and both laws $\mathbb{P}$ and $\hat{\mathbb{P}}$ admit $\mu$ as invariant measure. Finally, $M^{[f],L}$ and $N^{[f],L}$ are $(\mathcal{F}^0_t)_{t \geq 0}$-martingales under $\mathbb{P}$ for all $f \in D$ and moreover, $M^{[f],\hat{L}}$ and $N^{[f],\hat{L}}$ are $(\mathcal{F}^0_t)_{t \geq 0}$-martingales under $\hat{\mathbb{P}}$ for all $f \in D$. In particular, Conditions (A) and (S) are satisfied.
\end{Thm}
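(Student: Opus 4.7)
My strategy is to unbundle the theorem into five successive claims, with the associatedness of $\hat{\mathbb{P}}$ being the main substantive step. First I would establish (A3) by differentiating the identity $\mu(T_{t,2}f) = \mu(f)$ at $t = 0$, which yields $\mu(L_2 f) = 0$ for $f \in D(L_2)$ and hence $\mu(Lf) = 0$ for $f \in D \subset D(L_2)$; once the dual semigroup is known to be $\mu$-invariant, the same argument applied to $\hat L_2$ gives $\mu(\hat L f) = 0$. Next I would verify that $(\hat T_{t,2})_{t \ge 0}$ is itself a regular conservative $\mu$-invariant sub-Markovian s.c.c.s.: sub-Markovianity of the dual is by definition the regularity of $(T_{t,2})_{t \ge 0}$; conservativity $\hat T_{t,2} 1 = 1$ is equivalent to $\mu$-invariance of $(T_{t,2})_{t \ge 0}$ via the pairing $(T_{t,2}f, 1)_H = (f, \hat T_{t,2}1)_H$; dually, $\mu$-invariance of $(\hat T_{t,2})_{t \ge 0}$ follows from conservativity of $(T_{t,2})_{t \ge 0}$; and regularity of $(\hat T_{t,2})_{t \ge 0}$ is automatic since its own dual $(T_{t,2})_{t \ge 0}$ is sub-Markovian.

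The main step is showing that $\hat{\mathbb{P}}$ is associated with $(\hat T_{t,2})_{t \ge 0}$. For nonnegative $f_1, \ldots, f_n \in L^\infty(E, \mu)$ and $0 \le t_1 \le \cdots \le t_n \le T$, the time-reversal identity gives
\[
\hat{\mathbb{E}}\bigl[f_1(X_{t_1}) \cdots f_n(X_{t_n})\bigr] = \mathbb{E}\bigl[f_1(X_{T - t_1}) \cdots f_n(X_{T - t_n})\bigr].
\]
Reindexing the factors into increasing time order (set $s_i = T - t_{n+1-i}$) and applying the associatedness of $\mathbb{P}$ rewrites the right-hand side as $\mu\bigl(T_{T - t_n, 2}\bigl(f_n T_{t_n - t_{n-1}, 2}(f_{n-1} \cdots T_{t_2 - t_1, 2} f_1)\bigr)\bigr)$. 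I would then use $\mu$-invariance of $(T_{t,2})_{t \ge 0}$ to absorb the outermost factor, apply the duality $\mu(T_{s, 2} g \cdot h) = \mu(g \cdot \hat T_{s, 2} h)$ iteratively from the innermost layer outward, and finally reinsert an outer $\hat T_{t_1, 2}$ via $\mu$-invariance of $(\hat T_{t,2})_{t \ge 0}$. The result rearranges into $\mu\bigl(\hat T_{t_1, 2}\bigl(f_1 \hat T_{t_2 - t_1, 2}(f_2 \cdots \hat T_{t_n - t_{n-1}, 2} f_n)\bigr)\bigr)$, which is exactly the associatedness formula for $\hat{\mathbb{P}}$ with $(\hat T_{t,2})_{t \ge 0}$. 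I expect the index bookkeeping through the iterated duality steps to be the main obstacle: the manipulations are elementary, but the order in which duality and $\mu$-invariance are applied has to be chosen so that the dual correlation formula, not some hybrid, emerges at the end.

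The remaining claims then follow quickly. Taking $n = 1$ in the two associatedness formulas yields $\mathbb{E}[f(X_t)] = \mu(T_{t,2} f) = \mu(f)$ and analogously for $\hat{\mathbb{P}}$, so $\mu$ is invariant for both laws. For the martingale statements, associatedness gives the Markov property for $\mathbb{P}$ with transition semigroup $(T_{t,2})_{t \ge 0}$, whence for $f \in D$
\[
\mathbb{E}\bigl[M_t^{[f], L} - M_s^{[f], L} \bigm| \mathcal{F}_s^0\bigr] = T_{t - s, 2} f(X_s) - f(X_s) - \int_0^{t - s} T_{u, 2}(Lf)(X_s)\, du = 0
\]
by the fundamental semigroup identity $T_{r, 2} f - f = \int_0^r T_{u, 2} L_2 f\, du$ on $D(L_2) \supset D$, once the Markov-property formula is extended from $L^\infty$ test functions to $Lf \in H$ by a standard approximation. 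For $N^{[f], L}$, since $D$ is an algebra we have $f^2 \in D$, and the carr\'e du champ identity $L(f^2) = 2 f Lf + 2 \Gamma_L(f, f)$ combined with the $M$-martingale property applied to $f^2$ and a product-rule computation identifying $\langle M^{[f], L} \rangle_t = 2 \int_0^t \Gamma_L(f, f)(X_u)\, du$ then gives the martingale property of $N^{[f], L}$. The corresponding statements for $\hat{\mathbb{P}}$ and $\hat L$ follow from exactly the same arguments applied to the dual semigroup obtained in the second step.
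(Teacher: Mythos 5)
Your proposal is correct and takes essentially the same approach as the paper: time-reverse the associatedness formula, push the duality $(T_{t,2}f, g)_H = (f, \hat T_{t,2}g)_H$ iteratively through the nested correlation expression, and then extract the martingale statements from the Markov property together with the algebra structure of $D$ and the carr\'e du champ identity. The only difference is presentational: where you unpack the time-reversal computation and the quadratic-variation identification inline, the paper delegates them to its Lemma \ref{Lm_properties_semigroup_related_law}(iii) and Proposition \ref{Pp_martingale_problem} (the latter citing [Con11, Lem.~2.1.8] / [CG10, Lem.~5.1] for the genuinely technical step that your phrase ``product-rule computation'' compresses).
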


For the rest of this section we assume Conditions (A) and (S) without further mention them again. Let us introduce now the first ergodicity condition.

\begin{E1} (Microscopic coercivity and microscopic dynamic)\index{Microscopic coercivity} First let $(S,D)$ be essentially selfadjoint on $H$.\index{(E1)-(E4) ergodicity conditions} Furthermore, assume that there exists a constant $\Lambda_m > 0$ \index{$\Lambda_m$, $\Lambda_M$, $c_1$, $c_2$, $c_3$ (constants\\ergodicity method)}such that
\begin{align*}
-\left(Sf,f\right)_H \geq \Lambda_m \,\|(I-P)f\|^2 \quad \mbox{for all }f \in D.
\end{align*}
\end{E1}

By (E1) and the fact that $\mu$ is invariant w.r.t.~$(S,D)$, note that one obtains the conservativity condition $1 \in \overline{D}^S$ and $S1=0$. By assuming (E1) and using Proposition \ref{Pp_computation_quadratic_variation}, one can prove the following statement which will be one of the main ingredients to prove the final ergodicity theorem. The proof is given on  page \pageref{proof_main_ingredient_proposition}.

\begin{Pp} \label{Pp_property_from_microscopic}
Assume (E1). Let $f \in \mathcal{N}(S)^\bot$. Then it holds
\begin{align*}
\mathbb{E} \left[ \left( \frac{1}{t} \int_0^t f(X_s) \d s \right)^2 \right] \leq \frac{2}{t\, \Lambda_m} \|f\|^2.
\end{align*}
The same statement holds in case $\mathbb{E}$ is replaced by $\hat{\mathbb{E}}$.
\end{Pp}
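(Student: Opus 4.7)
The plan is to exploit the martingale structure from (S) under both $\mathbb{P}$ and $\hat{\mathbb{P}}$ to obtain an $L^2(\mathbb{P})$ bound on $\int_0^t Sg(X_s)\,ds$ for $g\in D$, then transfer the estimate to arbitrary $f \in \mathcal{N}(S)^\bot$ by density, using the essential selfadjointness of $(S,D)$ asserted in (E1).

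First, for $g \in D$ I introduce the time-reversed functional
\[
Z_t^{[g]} := g(X_0) - g(X_t) - \int_0^t \hat{L}g(X_s)\,\mathrm{d}s.
\]
A change of variable $s \mapsto t-s$ inside the integral shows $Z_t^{[g]} = M_t^{[g],\hat{L}} \circ \tau_t$ on $C([0,t];E)$. Combining this with the time-reversal assumption $\hat{\mathbb{P}}_t = \mathbb{P}_t \circ \tau_t^{-1}$ from (S) and Corollary \ref{Pp_computation_quadratic_variation} gives $\mathbb{E}[(Z_t^{[g]})^2] = \hat{\mathbb{E}}[(M_t^{[g],\hat{L}})^2] = -2t(Sg,g)_H$. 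Adding $M_t^{[g],L}$ and $Z_t^{[g]}$, the antisymmetric $A$-terms cancel and one obtains the clean identity
\[
M_t^{[g],L} + Z_t^{[g]} = -2\int_0^t Sg(X_s)\,\mathrm{d}s.
\]
An application of $(a+b)^2 \leq 2a^2 + 2b^2$ together with the two quadratic-variation identities yields
\[
\mathbb{E}\!\left[\Bigl(\int_0^t (-Sg)(X_s)\,\mathrm{d}s\Bigr)^{\!2}\right] \leq -2t\,(Sg,g)_H \qquad \text{for every } g \in D.
\]

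Now I use (E1) through the pointwise bound $-(Sg,g)_H \leq \|Sg\|^2/\Lambda_m$ on $D$: since $\overline{S}Pg = 0$, one has $-(Sg,g) = -(\overline{S}(g-Pg),g-Pg) \leq \|Sg\|\cdot\|g-Pg\|$ by Cauchy--Schwarz, while $\Lambda_m\|g-Pg\|^2 \leq -(Sg,g)$ by the Poincar\'e inequality; eliminating $\|g-Pg\|$ gives the claim. Inserting this into the previous estimate and dividing by $t^2$ yields
\[
\mathbb{E}\!\left[\Bigl(\tfrac{1}{t}\int_0^t (-Sg)(X_s)\,\mathrm{d}s\Bigr)^{\!2}\right] \leq \frac{2\,\|Sg\|^2}{t\,\Lambda_m}
\]
for every $g\in D$ and every $t>0$, which is the desired bound for all $f$ of the form $-Sg$ with $g \in D$. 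To finish, I extend by density: essential selfadjointness of $(S,D)$ makes $D$ a core for $\overline{S}$, and (E1) combined with the spectral theorem upgrades to $-\overline{S} \geq \Lambda_m I$ on $\mathcal{N}(S)^\bot$, so $-\overline{S}$ maps $\mathrm{Dom}(\overline{S})\cap \mathcal{N}(S)^\bot$ bijectively onto $\mathcal{N}(S)^\bot$; the core property then implies $\{-Sg : g\in D\}$ is dense in $\mathcal{N}(S)^\bot$. Approximating $f \in \mathcal{N}(S)^\bot$ by $f_n = -Sg_n \to f$ in $H$, invariance of $\mu$ under $\mathbb{P}$ gives $\|\int_0^t (f_n-f)(X_s)\,\mathrm{d}s\|_{L^2(\mathbb{P})} \leq t\,\|f_n - f\|_H \to 0$, so the estimate passes to the limit. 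The $\hat{\mathbb{E}}$-case is obtained by the symmetric swap $\mathbb{P}\leftrightarrow\hat{\mathbb{P}}$, $L\leftrightarrow\hat{L}$, under which all hypotheses are preserved. The only really delicate step is the density claim in $\mathcal{N}(S)^\bot$, where the essential selfadjointness of $(S,D)$ and the closed-range consequence of Poincar\'e are both essential; the remainder is just Minkowski built on Corollary \ref{Pp_computation_quadratic_variation}.
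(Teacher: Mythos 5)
Your proof is correct and follows essentially the same route as the paper's: the same time-reversal identity $M_t^{[g],L} + M_t^{[g],\hat L}\circ\tau_t = -2\int_0^t Sg(X_s)\,\mathrm{d}s$, the same use of Corollary \ref{Pp_computation_quadratic_variation} to evaluate both quadratic variations, and the same appeal to essential selfadjointness (core property) plus the closed range of $\overline S$ coming from (E1) to run the approximation. The only cosmetic differences are that you bound $(a+b)^2\leq 2a^2+2b^2$ where the paper uses Minkowski (same constant), and you package (E1) as the operator bound $-(Sg,g)\leq\|Sg\|^2/\Lambda_m$ on $D$ and approximate $f$ by $-Sg_n$, whereas the paper first solves $f=\overline Sg$ with $g\in\overline{D}^S\cap\mathcal N(S)^\bot$, derives $\|g\|\leq\|f\|/\Lambda_m$, and then approximates that $g$ by elements of $D$—both passages to the limit rest on the same estimate $\|\int_0^t h(X_s)\,\mathrm ds\|_{L^2(\mathbb P)}\leq t\|h\|_H$.
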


Up to now, note that everything was completely symmetric, i.e., each statement and assumption for $(L,D)$ and $\mathbb{P}$ is formulated and satisfied also in the dual case for $(\hat{L},D)$ and $\hat{\mathbb{P}}$. In the following, formulations are given with preference on $(L,D)$. However, we emphasize that when formulating the conditions below (or more precisely, only (E2)) in the analogous way for $(\hat{L},D)$, then the final statements of Theorem \ref{main_ergodicity_theorem_with_rate_of_convergence} or Theorem \ref{Cor_main_ergodicity_theorem_with_rate_of_convergence} are satisfied in case $\mathbb{P}$ is replaced by $\hat{\mathbb{P}}$ therein. 

Let us go on by introducing first the following technical condition. Assume that
\begin{align} \label{Technical_Condition_Ergodicity}
P(D) \subset \overline{D}^A,\quad AP(D) \subset \overline{D}^A \cap \overline{D}^S \cap \overline{D}^L\quad \mbox{and} \quad L_{| AP(D)} =S-A.
\end{align}
First of all, we define \index{$D_P$, $H_P$}
\begin{align*}
D_P\df P(D)\quad  \mbox{and} \quad H_P\df P(H).
\end{align*}
$H_P$ is again a Hilbert space endowed with the scalar product of $H$. Due to \eqref{Technical_Condition_Ergodicity} we can introduce $G\colon D_P \to H_P$\index{$G=PA^2P$ (ergodicity method)} as
\begin{align*}
G\df PA^2P \quad \mbox{on } D_P.
\end{align*}
This means that $Gf=PA^2Pg$ if $f=Pg$, $g \in D$. This is clearly well-defined. Note that 
\begin{align} \label{Eq_cauchy_APf}
\left( Gf,f\right)_{H_P} = - \|APf \|^2\quad   \mbox{for all } f \in D_P.
\end{align}
So, $(G,D_P)$ is dissipative and densely defined on $H_P$, hence closable on $H_P$. Its closure on $H_P$ is denoted by $(G,\overline{D_P}^G)$. While almost all operators in this section are considered on $H$, we emphasize that $(G,\overline{D_P}^{G})$ is understood as an operator living on $H_P$. We shall mention that the operator $G$ in our concrete examples describes the macroscopic dynamics, i.e., can be obtained by using a suitable macroscopic scaling limit of $L$. Therefore, consider the applications below for interpretation. 

Below we need a Kato-boundedness condition of $LAP$ by $G$. Therefore, note that also $LAP \colon D_P \to H$ can be defined on $D_P=P(D)$ in the obvious way as
\begin{align*}
LAPf=LAPg \quad \mbox{for all } f=Pg \mbox{ with } g \in D.
\end{align*}
Then the desired Kato-boundedness condition reads as follows.

\begin{E2} (Kato-boundedness) Assume the technical condition from \eqref{Technical_Condition_Ergodicity}. \index{(E1)-(E4) ergodicity conditions}Assume that the operator $(LAP,D_P)$ is $G$-bounded on $D_P$. This means that there exists $c_1, c_2 \in [0,\infty)$\index{$\Lambda_m$, $\Lambda_M$, $c_1$, $c_2$, $c_3$ (constants\\ergodicity method)} such that
\begin{align*}
\|LAPf\| \leq c_1 \| Gf \|  + c_2 \|f\| \quad \mbox{for all } f \in D_P.
\end{align*}
\end{E2}

Via this condition we can extend the operator $(LAP,D_P)$ to $\overline{D_P}^G$ as follows. Therefore, assume (E2) and let $f \in \overline{D_P}^G$. Thus there exists $f_n \in D_P$, $n \in \mathbb{N}$, such that 
\begin{align*}
f_n \to f \quad \mbox{and}\quad Gf_n \to Gf \quad \mbox{as } n \to \infty
\end{align*}
with convergence in $H$. So, (E2) in particular implies that $LAPf_n$, $n \in \mathbb{N}$, is a Cauchy sequence in $H$. We define\index{$LAP$ and $\left[LAP\right]$}
\begin{align*}
\left[LAP\right]f\df \lim_{n \to \infty} LAPf_n \in H.
\end{align*}
This is independent of the choice of $(f_n)_{n \in \mathbb{N}}$, so well-defined. Note that $(\left[LAP\right],\overline{D_P}^G)$ extends $(LAP,D_P)$. Now we introduce the next condition. 

\begin{E3} (Macroscopic coercivity and macroscopic dynamic) \index{Macroscopic coercivity}Assume \eqref{Technical_Condition_Ergodicity} and assume \index{(E1)-(E4) ergodicity conditions}that $(G,D_P)$ is essentially selfadjoint on $H_P$. Furthermore, assume that there exists a constant $\Lambda_M > 0$\index{$\Lambda_m$, $\Lambda_M$, $c_1$, $c_2$, $c_3$ (constants\\ergodicity method)} such that
\begin{align*}
-\left(Gf,f\right)_{H} \geq \Lambda_M \,\|f-\left(f,1\right)_H\|^2_{H} \quad \mbox{for all }f \in D_P.
\end{align*}
Finally, assume that $1 \in H_P$ and even $1 \in \overline{D_P}^G$, $G1=0$.
\end{E3}

Here the requirement $1 \in H_P$ is necessary in order to guarantee that $1 \in \overline{D_P}^G$ makes sense. Note that $1 \in H_P$ means $P1=1$, this is, $1 \in \overline{D}^S$ and $S1=0$. As a consequence of (E1)-(E3) we get the following lemma,  essentially used in order to prove Theorem \ref{main_ergodicity_theorem_with_rate_of_convergence}.

\begin{Lm} \label{Lm_after_H4}
\begin{itemize}
\item[(i)]
Assume (E1) and (E2). Then for all $g \in \overline{D_P}^G$ we have
\begin{align}  \label{relation2_G_LAP}
-Gg = P\left[LAP\right]g \quad \mbox{and} \quad \left(\left[LAP\right]g,Gg\right)_H=-\|Gg\|^2_H.
\end{align}
\item[(ii)] Assume (E1) and (E3). Then we have
\begin{align*}
f \in \mathcal{N}(G)^\bot \quad \mbox{for all } f \in H_P \mbox{ with } \left(f,1\right)_{H}=0.
\end{align*}
\end{itemize}
\end{Lm}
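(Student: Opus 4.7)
The proof splits naturally along the two parts of the lemma.

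\emph{Part (i).} The strategy is to verify both identities on the dense subspace $D_P$ and then extend by continuity using the Kato-boundedness from (E2). For $g = Pf \in D_P$ with $f \in D$, the technical condition \eqref{Technical_Condition_Ergodicity} gives $APg \in \overline{D}^S \cap \overline{D}^A \cap \overline{D}^L$ and that $L$ acts on $APg$ as $S - A$, so
\begin{equation*}
LAPg = SAPg - A^2 Pg.
\end{equation*}
The key observation is that $PS = 0$ on $\overline{D}^S$. Indeed, since $(S,D)$ is essentially self-adjoint by (E1) and $P$ is the self-adjoint orthogonal projection onto $\mathcal{N}(S)$, for any $h \in \overline{D}^S$ and $k \in \mathcal{N}(S)$ one has $(PSh,k)_H = (Sh,Pk)_H = (Sh,k)_H = (h,Sk)_H = 0$; hence $PSh \in \mathcal{N}(S) \cap \mathcal{N}(S)^\bot = \{0\}$. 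Applying $P$ to the displayed equation yields $PLAPg = -PA^2Pg = -Gg$. Since $Gg \in H_P$ entails $PGg = Gg$, the second identity follows immediately:
\begin{equation*}
(LAPg, Gg)_H = (LAPg, PGg)_H = (PLAPg, Gg)_H = -\|Gg\|_H^2.
\end{equation*}

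To pass to a general $g \in \overline{D_P}^G$, I would choose $g_n \in D_P$ with $g_n \to g$ and $Gg_n \to Gg$ in $H$; by (E2) the sequence $(LAPg_n)$ is then Cauchy with limit $[LAP]g$ by the very definition of $[LAP]$. Continuity of $P$ and of the inner product allow both identities to be passed to the limit, yielding the claim on all of $\overline{D_P}^G$.

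\emph{Part (ii).} It suffices to identify the kernel of the closed operator $(G, \overline{D_P}^G)$. By (E3) the coercivity estimate
\begin{equation*}
-(Gh, h)_H \geq \Lambda_M \, \|h - (h,1)_H \cdot 1\|_H^2
\end{equation*}
holds on $D_P$ and extends by continuity to $\overline{D_P}^G$. For $h \in \mathcal{N}(G)$ the left-hand side vanishes, forcing $h = (h,1)_H \cdot 1$, so $\mathcal{N}(G) \subseteq \mathbb{R}\cdot 1$; combined with $1 \in \overline{D_P}^G$ and $G1 = 0$ from (E3), this gives $\mathcal{N}(G) = \mathbb{R}\cdot 1$. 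Any $f \in H_P$ with $(f,1)_H = 0$ is therefore orthogonal to $\mathcal{N}(G)$, which is the assertion.

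\emph{Main obstacle.} The only conceptually nontrivial ingredient is the identity $PS = 0$, which relies crucially on the essential self-adjointness of $(S,D)$ from (E1); without it, one would only know $PSh \in \mathcal{N}(S)$ but not that it pairs trivially with $\mathcal{N}(S)$. The remainder is a careful book-keeping exercise, propagating continuity through the definitions of $[LAP]$ and of the closure of $G$, and using that $P$ is continuous and self-adjoint.
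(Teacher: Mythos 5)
Your proof is correct and follows essentially the same route as the paper's. The only genuine variant is the justification of $PS = 0$ on $\overline{D}^S$: the paper invokes the closed-range lemma (which shows $\mathcal{R}(S) = \mathcal{N}(S)^\bot$ under (E1)) and concludes $P_{|\mathcal{R}(S)} = 0$, whereas you argue directly from the self-adjointness of the closure of $S$ and of $P$ that $PSh$ lies in $\mathcal{N}(S) \cap \mathcal{N}(S)^\bot = \{0\}$. Your argument is slightly more elementary since it does not use that $\mathcal{R}(S)$ is closed (only that $\mathcal{R}(S) \subset \overline{\mathcal{R}(S)} = \mathcal{N}(S)^\bot$, which holds for any self-adjoint operator); the paper has the closed-range fact at hand anyway because it is needed in Proposition~\ref{Pp_property_from_microscopic}. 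The remaining steps—the decomposition $LAPg = SAPg - A^2Pg$ on $D_P$ via the technical condition~\eqref{Technical_Condition_Ergodicity}, the passage to $\overline{D_P}^G$ through the Kato-bound in (E2), the use of $PG=G$ together with self-adjointness of $P$ for the second identity, and the computation $\mathcal{N}(G) = \mathbb{R}\cdot 1$ for part (ii)—all match the paper's argument.
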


This is proven on page \pageref{proof_Lemma_before_maintheorem}. We finally arrive at the desired ergodicity theorem which gives a concrete rate of convergence. The proof can be found on page \pageref{proof_main_theorem_nr1}.

\begin{Thm} \label{main_ergodicity_theorem_with_rate_of_convergence}
In the situation from (A) and (S), assume Conditions (E1)-(E3) with the constants $\Lambda_m$, $\Lambda_M$, $c_1$ and $c_2$. Let $f \in L^2(E,\mu)$ be arbitrary and let $t >0$. We obtain ergodicity with rate of convergence
\begin{align*}
\left\| \frac{1}{t} \int_0^t f(X_s) \d s  - \mathbb{E}_\mu\left[f\right] \right\|_{L^2(\mathbb{P})} &\leq  \sqrt{2} \, \left(\frac{1}{t} \, \kappa_1 + \frac{1}{\sqrt{t}} \, \kappa_2 \right) \left\| f(X_0)-\mathbb{E}_\mu\left[f\right] \right\|_{L^2(\mathbb{P})}
\end{align*}
where
\begin{align*}
\kappa_1=\frac{\sqrt{2}}{\sqrt{\Lambda_M}}\quad \mbox{and} \quad \kappa_2= \frac{c_1+1}{\sqrt{\Lambda_m}} + \frac{c_2}{\sqrt{\Lambda_m}\, \Lambda_M} + \sqrt{\frac{c_1}{\sqrt{\Lambda_M}} + \frac{c_2}{\Lambda_M\,\sqrt{\Lambda_M}}}\,.
\end{align*}
Here $\mathbb{E}_\mu\left[f\right]=\int_E f \d \mu$\index{$\mathbb{E}_\mu[\cdot]$}. 
\end{Thm}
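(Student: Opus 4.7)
The strategy is to reduce to mean zero, split $f = Pf + (I-P)f$, control the kernel-orthogonal part via Proposition \ref{Pp_property_from_microscopic}, and handle the kernel part $Pf$ by inverting $G$ and converting the resulting time integral to a boundary term plus a martingale through the martingale problem (S). Since subtracting the constant $\mathbb{E}_\mu[f]$ from $f$ leaves both sides of the inequality unchanged, I may assume $\mathbb{E}_\mu[f] = 0$, so that $\|f(X_0) - \mathbb{E}_\mu[f]\|_{L^2(\mathbb{P})} = \|f\|_H$. Since $(I-P)f \in \mathcal{N}(S)^\bot$, Proposition \ref{Pp_property_from_microscopic} immediately yields a $\sqrt{2/(t\Lambda_m)}\,\|f\|_H$-contribution for that piece.

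For $Pf$, Lemma \ref{Lm_after_H4}(ii) together with $P1 = 1$ (which follows from $S1 = 0$) and $(Pf,1)_H = (f,1)_H = 0$ places $Pf$ in $\mathcal{N}(G)^\bot$. Essential selfadjointness of $G$ and the macroscopic spectral gap $\Lambda_M$ then provide a unique $g \in \overline{D_P}^G \cap \{1\}^\bot$ solving $-Gg = Pf$, with $\|g\|_H \leq \|Pf\|_H/\Lambda_M$ and $\|Gg\|_H = \|Pf\|_H$. Lemma \ref{Lm_after_H4}(i) rewrites $Pf = [LAP]g - (I-P)[LAP]g$. The second summand lies in $\mathcal{N}(S)^\bot$ and has norm at most $(c_1+c_2/\Lambda_M)\|Pf\|_H$ by (E2), so Proposition \ref{Pp_property_from_microscopic} supplies another $1/\sqrt{t}$-term. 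For $\int_0^t [LAP]g(X_s)\,\d s$ I use the martingale associated with $h := APg \in \overline{D}^L$ (guaranteed by \eqref{Technical_Condition_Ergodicity}), writing
$$\int_0^t [LAP]g(X_s)\,\d s \;=\; APg(X_t) - APg(X_0) - M_t^{[APg],L}.$$
Since $X_0, X_t \sim \mu$, the boundary pair has $L^2(\mathbb{P})$-norm $\leq 2\|APg\|_H$, and the closure of \eqref{Eq_cauchy_APf} gives $\|APg\|_H^2 = -(Gg,g)_H \leq \|Pf\|_H^2/\Lambda_M$, producing an $O(1/t)$-contribution. For the martingale, Corollary \ref{Pp_computation_quadratic_variation} gives $\mathbb{E}[(M_t^{[APg],L})^2] = -2t(S(APg),APg)_H$. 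Antisymmetry of $A$ forces $(A(APg),APg)_H = 0$, collapsing this to $-2t([LAP]g, APg)_H$, which Cauchy--Schwarz bounds by $2t(c_1/\sqrt{\Lambda_M} + c_2/(\Lambda_M\sqrt{\Lambda_M}))\|Pf\|_H^2$, an $O(1/\sqrt{t})$-contribution. Assembling the four estimates via the triangle inequality in $L^2(\mathbb{P})$ and factoring out $\sqrt{2}$ reproduces the stated constants $\kappa_1$ and $\kappa_2$.

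The main obstacle is the approximation needed to apply the martingale identities when $g \in \overline{D_P}^G$ and $h = APg$ do not lie in the core $D$. One chooses $g_n \in D_P$ with $g_n \to g$ and $Gg_n \to Gg$ in $H$; then (E2) forces $[LAP]g_n \to [LAP]g$, and \eqref{Eq_cauchy_APf} applied to differences yields $\|APg_n - APg_m\|_H^2 = -(G(g_n-g_m), g_n-g_m)_H \to 0$, so $APg_n$ converges as well. A further approximation of each $APg_n$ by elements of $D$ in a graph norm controlling $S$, $A$ and $L$ transfers both the martingale property (S) and the variance identity of Corollary \ref{Pp_computation_quadratic_variation} to $h = APg$, while the antisymmetry $(Au,u)_H = 0$ is preserved under closure. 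With this extension in hand, the four bounds combine to give the claimed ergodicity estimate.
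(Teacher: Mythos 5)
Your proof is correct and follows essentially the same route as the paper's: the identical three-part decomposition $f = (I-P)f + (Pf - [LAP]g) + [LAP]g$ with $-Gg = Pf$ from Lemma \ref{Lm_after_H4}, the same application of Proposition \ref{Pp_property_from_microscopic} to the two pieces annihilated by $P$, and the same boundary-term-plus-martingale splitting of $[LAP]g$ via Corollary \ref{Pp_computation_quadratic_variation}. The only stylistic difference is in the approximation for the third piece: rather than transferring the martingale property (S) to $h = APg$ (which would require controlling $N^{[h],L}$, hence $h^2$, under closure), the paper more economically first derives the scalar inequality $\| \frac{1}{t}\int_0^t Lh(X_s)\,\mathrm{d}s\|_{L^2(\mathbb{P})} \leq \frac{2}{t}\|h\|_H + \frac{\sqrt{2}}{\sqrt{t}}\sqrt{-(Lh,h)_H}$ for $h \in D$ and then pushes this inequality alone through the $L$-graph-norm closure using \eqref{eq_property_from_microscopic}, which sidesteps the joint-graph-norm issue you flag.
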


The rate of convergence can even further be specified with the help of an algebraic relation as introduced next.

\begin{E4} (Algebraic relation) Assume \eqref{Technical_Condition_Ergodicity}. Assume that there exists  $c_3 \geq 0$\index{$\Lambda_m$, $\Lambda_M$, $c_1$, $c_2$, $c_3$ (constants\\ergodicity method)} such that\index{(E1)-(E4) ergodicity conditions} 
\begin{align*}
\left(SAPf,APf\right)_H=c_3 \left(Gf,f\right)_H \quad \mbox{for all }f \in D_P.
\end{align*}
This is fulfilled for instance if $SAP(D) \subset \overline{D}^A$ and 
\begin{align*}
PA\,SAP=- c_3\,PA^2P\quad \mbox{on }D.
\end{align*}
\end{E4}

Then from the proof of Theorem \ref{main_ergodicity_theorem_with_rate_of_convergence} we directly obtain the upcoming corollary, see page \pageref{proof_main_theorem_corollary} for details.

\begin{Thm} \label{Cor_main_ergodicity_theorem_with_rate_of_convergence}
Additionally to the assumptions from Theorem \ref{main_ergodicity_theorem_with_rate_of_convergence} assume that (E4) holds with the respective constant $c_3$. Let $f \in L^2(E,\mu)$ be arbitrary and let $t >0$. We obtain ergodicity with rate of convergence
\begin{align*}
\left\| \frac{1}{t} \int_0^t f(X_s) \d s  - \mathbb{E}_\mu\left[f\right] \right\|_{L^2(\mathbb{P})} &\leq  \sqrt{2} \left( \frac{1}{t} \, \kappa_1 + \frac{1}{\sqrt{t}} \, \kappa_2 \right) \left\| f(X_0)-\mathbb{E}_\mu\left[f\right] \right\|_{L^2(\mathbb{P})}
\end{align*}
where the constants $\kappa_1$ and $\kappa_2$ can further be specified as
\begin{align*}
\kappa_1=\frac{\sqrt{2}}{\sqrt{\Lambda_M}}\quad \mbox{and} \quad \kappa_2= \frac{c_1+1}{\sqrt{\Lambda_m}} + \frac{c_2}{\sqrt{\Lambda_m}\, \Lambda_M} + \frac{\sqrt{c_3}}{\sqrt{\Lambda_M}}.
\end{align*}
\end{Thm}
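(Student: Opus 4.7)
The strategy is to follow the proof of Theorem \ref{main_ergodicity_theorem_with_rate_of_convergence} verbatim and sharpen only one single estimate. From the statement, the constants $\Lambda_m, c_1, c_2$ enter $\kappa_2$ in exactly the same way as before; only the third summand has changed from the square-root expression $\sqrt{c_1/\sqrt{\Lambda_M} + c_2/(\Lambda_M\sqrt{\Lambda_M})}$ to $\sqrt{c_3}/\sqrt{\Lambda_M}$. This strongly suggests that (E2) and (E4) are used to bound exactly the same quantity in two different ways, and that (E4) gives the cleaner estimate.

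Anticipating the structure of the proof of Theorem \ref{main_ergodicity_theorem_with_rate_of_convergence}: after reducing to $\mathbb{E}_\mu[f]=0$ and splitting $f=(I-P)f+Pf$, Proposition \ref{Pp_property_from_microscopic} handles the microscopic piece and produces the $\Lambda_m$-terms in $\kappa_2$. For the macroscopic piece, (E3) combined with Lemma \ref{Lm_after_H4}(ii) lets one write $Pf=-Gg$ for some $g\in\overline{D_P}^G$ with $(g,1)_H=0$ and $\|g\|\leq \|Pf\|/\Lambda_M$. Using Lemma \ref{Lm_after_H4}(i) one replaces $Gg$ by $P[LAP]g$, and via the martingale $M^{[APg],L}$ (transferred from $D$ to $\overline{D}^L$ by approximation, justified by \eqref{Technical_Condition_Ergodicity}) one obtains
\begin{equation*}
\int_0^t L(APg)(X_s)\,\d s \;=\; APg(X_t)-APg(X_0) - M_t^{[APg],L}.
\end{equation*}
The boundary terms contribute the $\kappa_1/t$ part (using $\|APg\|^2=-(Gg,g)_H\leq \|Gg\|\|g\|\leq \|Pf\|^2/\Lambda_M$), and the martingale contributes an $O(1/\sqrt{t})$ part controlled by Corollary \ref{Pp_computation_quadratic_variation}:
\begin{equation*}
\mathbb{E}\bigl[(M_t^{[APg],L})^2\bigr] \;=\; -2t\,(SAPg,APg)_H.
\end{equation*}

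This bracket is the single place where the difference between Theorem \ref{main_ergodicity_theorem_with_rate_of_convergence} and the present corollary arises. In the main theorem one uses antisymmetry of $A$ to write $-(SAPg,APg)_H = -(LAPg,APg)_H \leq \|[LAP]g\|\,\|APg\|$, then applies (E2) and the same $\|APg\|\leq \|Pf\|/\sqrt{\Lambda_M}$ bound to recover the square-root term. Under the additional hypothesis (E4), however, I would estimate the bracket directly:
\begin{equation*}
-(SAPg,APg)_H \;=\; -c_3\,(Gg,g)_H \;=\; c_3\,\|APg\|^2 \;\leq\; \frac{c_3}{\Lambda_M}\,\|Pf\|^2,
\end{equation*}
so that $\|M_t^{[APg],L}\|_{L^2(\mathbb{P})} \leq \sqrt{2tc_3/\Lambda_M}\,\|Pf\|$, which after division by $t$ contributes exactly $\sqrt{c_3}/\sqrt{\Lambda_M}$ to $\kappa_2$ (the overall $\sqrt 2$ is absorbed in the prefactor of the theorem). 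Every other ingredient is literally the corresponding ingredient from the proof of Theorem \ref{main_ergodicity_theorem_with_rate_of_convergence}, so combining both gives the stated bound.

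The main obstacle I expect is a closure issue: (E4) is stated for $f\in D_P$, whereas the $g$ produced by inverting $-G$ lies in $\overline{D_P}^G$. I would handle this by choosing a $G$-graph-approximating sequence $f_n\in D_P$ with $f_n\to g$ and $Gf_n\to Gg$ in $H$; by \eqref{Eq_cauchy_APf} one has $\|APf_n-APf_m\|^2=-(G(f_n-f_m),f_n-f_m)_H\to 0$, so $APf_n$ is Cauchy with limit, say, $[AP]g$, and by (E2) also $[LAP]f_n$ converges. Passing to the limit in $(SAPf_n,APf_n)_H=c_3(Gf_n,f_n)_H$ extends the identity to $\overline{D_P}^G$ (interpreting $SAPg$ weakly, e.g.\ via $(SAPg,APg)_H = (LAPg,APg)_H$ using antisymmetry of $A$ on $\overline{D}^A$), which is all that is needed in the argument above. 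Once this extension is in place, the remainder of the proof is just bookkeeping.
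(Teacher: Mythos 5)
Your proposal is correct and follows essentially the same route as the paper: both proofs modify Step 3 of Theorem \ref{main_ergodicity_theorem_with_rate_of_convergence} by replacing the Cauchy--Schwarz/(E2) bound on $-(LAPh,APh)_H=-(SAPh,APh)_H$ with the identity $-(SAPh,APh)_H=-c_3(Gh,h)_H$ from (E4), and then pass to $\overline{D_P}^G$ by the same graph-norm approximation. One tiny simplification: after simplifying the estimate on $D_P$ you do not actually need to interpret $SAPg$ (weakly or otherwise) for $g\in\overline{D_P}^G$ --- it suffices to pass the finished inequality $\bigl\|\frac{1}{t}\int_0^t LAPf_n(X_s)\,\d s\bigr\|_{L^2(\mathbb{P})}\leq\frac{2}{t}\|APf_n\|+\frac{\sqrt{2c_3}}{\sqrt{t}}\sqrt{-(Gf_n,f_n)_H}$ to the limit using \eqref{eq_property_from_microscopic}, which is exactly what the paper does.
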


Theorem \ref{main_ergodicity_theorem_with_rate_of_convergence} and Theorem \ref{Cor_main_ergodicity_theorem_with_rate_of_convergence} is proven in Section \ref{subsection_proofs_to_abstract_setting}. However, to give an idea how everything fits together, let us shortly sketch the proof. First it is easy to see that w.l.o.g.~one may assume that $\left(f,1\right)_H=0$. Then one can decompose $f$ in the form
\begin{align*}
f=f-Pf +Pf - \left[LAP\right]g +\left[LAP\right]g.
\end{align*}
Here $g \in \overline{D_P}^G$ can be chosen with the help of Lemma \ref{Lm_after_H4}\,(ii) such that $Pf=-Gg$. Then again by Lemma \ref{Lm_after_H4}\,(i) we have $P\left[LAP\right]g=-Gg$. So $P(f-Pf)=0$ and $P(Pf- \left[LAP\right]g )=0$. Thus the terms $f-Pf$ and $Pf- \left[LAP\right]g$ can be estimated with the help of Proposition \ref{Pp_property_from_microscopic}. Finally, the last term $\left[LAP\right]g$ can afterwards be estimated using Corollary \ref{Pp_computation_quadratic_variation} and a suitable approximation. This then yields the desired rate of convergence in terms of $\kappa_1$ and $\kappa_2$. Thus the overview of our abstract ergodicity method is completed. 

\begin{Rm} We mention that the setting developed here may also be called or considered as a setting which allows to discuss convergence to equilibrium of \textit{non-reversible} diffusions, see \cite{LNP13} for the terminology.
\end{Rm}

The results for our applications are summarized in the upcoming subsection. Let us conclude with a final remark by comparing our rate of convergence with a possible exponential rate in $L^2(\mu)$ of the semigroup $(T_{t,2})_{t \geq 0}$ when (A)' and (S)' are assumed and let us describe some advantages of our method.

\begin{Rm} \label{Rm_comparison_ergodicity_exponenential_rate} 
Assume the conditions from Theorem \ref{Thm_Martingale_Problem_via_regular semigroups} and (E1) up to (E4).  In our applications below, the semigroup $(T_{t,2})_{t \geq 0}$ always admits a stochastic representation as the transition kernel of a $\mu$-standard right process\index{$\mu$-standard right process} $\mathbf{M}$ having continuous sample paths and infinite lifetime. In these situations, one obtains a family of probability measures $(\mathbb{P}_{x})_{x \in E}$ associated\index{associatedness!with a right process} with the right process and satisfying $T_{t,2} f (x) = \mathbb{E}_{x} [f(X_t)]$ for $\mu$-a.e.~$x \in E$ and each $f \in B_b(E)$ (i.e., $f$ is bounded measurable real-valued function), $t \geq 0$. $\mathbf{M}$ is also said to be associated with $(T_{t,2})_{t \geq 0}$, see \cite[Def.~2.2.7]{Con11} for the precise definition. The desired law $\mathbb{P}$ in (S) is then constructed as the law of the right process under the probability measure $\int_E \mathbb{P}_{x}\, \mathrm{d}\mu(x)$. Here $\mathbb{E}_x$ denotes expectation w.r.t.~$\mathbb{P}_x$. We refer to \cite[Sec.~3]{CG10}, \cite[Ch.~3]{Con05} and especially to the reference \cite[Ch.~2, Ch.~6]{Con11} where the notations (including measurability issues) are made precise. Then, by using Fubini's theorem and Jensen's inequality, the estimate
\begin{align*}
\left\| \int_0^t T_{s,2} f\, \mathrm{d}s \right\|^2_{L^2(\mu)} \leq \left\| \int_0^t f(X_s) \,\mathrm{d}s \right\|^2_{L^2(\mathbb{P})},\quad f \in L^2(E,\mu),
\end{align*}
can easily be derived in our applications of interest. Hence, by the invariance of $\mu$ w.r.t.~$\mathbb{P}$ we even obtain mean ergodicity of the semigroup with the same rate of convergence as in Theorem \ref{Cor_main_ergodicity_theorem_with_rate_of_convergence}, i.e., for all $f \in L^2(\mu)$ and $t >0$ we have
\begin{align} \label{exponen_rat_convergence_semigroup_comparision_ergodicity_rate_1}
\left\| \frac{1}{t} \int_0^t T_{s,2}f \d s  - \mathbb{E}_\mu\left[f\right] \right\|_{L^2(\mu)} \leq  \sqrt{2} \left( \frac{1}{t} \, \kappa_1 + \frac{1}{\sqrt{t}} \, \kappa_2 \right) \left\| f-\mathbb{E}_\mu\left[f\right] \right\|_{L^2(\mu)}
\end{align}
with the constants $\kappa_1,\kappa_2$ as explicitly specified in Theorem \ref{Cor_main_ergodicity_theorem_with_rate_of_convergence}.

Now, for the moment, assume that the semigroup $(T_{t,2})_{t \geq 0}$ would admit an exponential rate of convergence in $L^2(\mu)$ as can be obtained e.g.~in existing hypocoercivity methods; see \cite{DMS13} or \cite{GS12B}. This means that there exists $\nu_1, \nu_2 \in (0,\infty)$ such that for each $f \in L^2(\mu)$ 
\begin{align*} 
\left\|T_{t,2}f - \mathbb{E}_\mu\left[f\right] \right\|_{L^2(\mu)} \leq \nu_1 e^{-\nu_2 \,t}  \left\|f - \mathbb{E}_\mu\left[f\right]\right\|_{L^2(\mu)} \quad \mbox{for all } t \geq 0.
\end{align*}
Assuming the latter we directly infer that for each $t > 0$ we have
\begin{align} \label{exponen_rat_convergence_semigroup_comparision_ergodicity_rate_2}
\left\|\frac{1}{t} \int_0^t T_{s,2}f \, \mathrm{d}s - \mathbb{E}_\mu\left[f\right] \right\|_{L^2(\mu)} \leq \frac{1}{t} \,\frac{\nu_1}{\nu_2}  \left\|f - \mathbb{E}_\mu\left[f\right]\right\|_{L^2(\mu)}.
\end{align}
So, following the vocabulary used in \cite{CG10}, our rate may be called \grqq $L^2$-\textit{exponentially ergodic}\index{$L^2$-exponentially ergodic} in the sense that the convergence rate \eqref{exponen_rat_convergence_semigroup_comparision_ergodicity_rate_1} corresponds to (but apparently not implies) exponential convergence of the semigroup.\grqq~Moreover, by comparing \eqref{exponen_rat_convergence_semigroup_comparision_ergodicity_rate_1} and \eqref{exponen_rat_convergence_semigroup_comparision_ergodicity_rate_2} we see that a possible exponential rate of convergence of the semigroup in $L^2(\mu)$ does not imply a much better rate than can be achieved by our ergodicity method. 

Moreover, we shall remark that our ergodicity rate gives a concrete quantitative description of the constants occurring in the rate of convergence. Up to the best of the author's knowledge, such explicit quantitative descriptions of the rate have not yet been obtained in related abstract methods for analyzing the relaxation to equilibrium of degenerate evolution equations. Finally, and this is of course the most important point, the main advantage of the ergodicity method is that it even applies in singular situations that arise in studying e.g.~$N$-particle Langevin systems in Statistical Mechanics, see Section \ref{Results_Applications_N_particle_Langevin}. In this situation namely, known results in literature on the relaxation to equilibrium of the Langevin dynamics in $L^2(\mu)$ using tools about hypoellipticity or hypocoercivity (see e.g.~\cite[Theo.~6.4]{HN05}, \cite[Theo.~10]{DMS13} or \cite[Theo.~1]{GS14}) are valid only under nonsingular and partly smooth type assumptions on the underlying potential $\Phi$. However, nonsmooth singular potentials are allowed in the ergodicity theorems below, see Theorem \ref{Ergodicity_theorem_N_particle_Langevin} and Remark \ref{Rm_Ergodicity_theorem_N_particle_Langevin}\,(iii).

\end{Rm}


\subsection{Application to the N-particle Langevin dynamics with singular potentials} \label{Results_Applications_N_particle_Langevin}

Let $d,N \in \mathbb{N}$. In the first example we consider the $N$-particle Langevin dynamics\index{N-particle Langevin dynamics} with singular potentials as constructed and analyzed in \cite{CG10} and \cite{Con11}. We recall shortly the setting and framework from \cite{CG10}, or \cite[Ch.~6]{Con11} equivalently. Consider the latter references for further motivation and interpretation. The underlying dynamics is given by the stochastic differential equation in $(\mathbb{R}^d)^N \times (\mathbb{R}^d)^N$
\begin{align} \label{Eq_N_particle_Langevin}
&\mathrm{d} x_t = \omega_t \, \mathrm{d}t \\
&\mathrm{d} \omega_t = -\alpha \, \omega_t \, \mathrm{d}t - \nabla \Psi (x_t)\, \mathrm{d}t + \sqrt{\frac{2\alpha}{\beta}}\, \mathrm{d}W_t. \nonumber
\end{align}
The constants $\alpha$ and $\beta$ are assumed to be strictly positive. $x_t=(x^{(1)}_t,\cdots,x^{(N)}_t)\in (\mathbb{R}^d)^N$ denotes the position and $\omega_t=(\omega^{(1)}_t,\cdots,\omega^{(N)}_t) \in (\mathbb{R}^d)^N$ the velocity variable of $N$ particles moving in $\mathbb{R}^d$. We shortly write $\mathbb{R}^{dN}$ instead of $(\mathbb{R}^d)^N$. $\Psi \colon \mathbb{R}^{dN} \to\mathbb{R} \cup \{\infty\}$ is the $N$-particle interacting potential. The $\mathbb{R}^{dN}$-valued standard Brownian motion $(W_t)_{t \geq 0}$ describes the stochastic perturbation of the particles and the first term in the velocity equation means friction. For the physical background see \cite[Ch.~8]{Sch06}, \cite{CKW04} or \cite{Ris89}. For notational convenience and in view of having a convenient expression of the invariant measure later on, we redefine the potential again via setting
\begin{align*}
\Phi\df \beta \, \Psi.
\end{align*}
The Kolmogorov generator\index{Kolmogorov generator!$N$-particle Langevin dynamics} associated to \eqref{Eq_N_particle_Langevin} is  now given (at first formally) by
\begin{align} \label{Eq_generator_N_particle_Langevin}
L=\omega \cdot \nabla_x - \alpha~ \omega \cdot \nabla_\omega - \frac{1}{\beta}\,\nabla_x \Phi \cdot \nabla_\omega + \frac{\alpha}{\beta}\, \Delta_\omega
\end{align}
Here $\cdot$ denotes the standard Euclidean scalar product, $\nabla_x$ and $\nabla_\omega$ the usual gradient operators in $\mathbb{R}^{dN}$ for the respective $x$- or $\omega$-direction and $\Delta_\omega$ is the Laplace-operator in $\mathbb{R}^{dN}$ in the $\omega$-direction. The measure $\mu_{\Phi,\beta}$\index{$\mu_{\Phi,\beta}$} is defined as
\begin{align*}
\mu_{\Phi,\beta} = \tfrac{1}{\sqrt{2\pi \beta^{-1}}^{Nd}} \, e^{-\Phi(x) - \beta \frac{\omega^2}{2}} \,\mathrm{d} x \otimes \mathrm{d} \omega = e^{-\Phi(x)} \,\mathrm{d} x \otimes  \nu_\beta .
\end{align*}
Above $\mathrm{d}x$ and $\mathrm{d}\omega$ denotes the Lebesgue measure in $\mathbb{R}^{dN}$, $\omega^2 \df \omega \cdot \omega$ and $\nu_\beta$ the normalized Gaussian measure on $\mathbb{R}^{dN}$ with mean $0$ and covariance matrix $\beta^{-1} I$\index{Gaussian measure}. Finally, the natural state space for the $N$-particle system is denoted by
\begin{align*}\index{$\widetilde{\mathbb{R}^{dN}}$}
E\df \widetilde{\mathbb{R}^{dN}}\times \mathbb{R}^{dN} \quad \mbox{where} \quad \widetilde{\mathbb{R}^{dN}} \df \left\{ x \in \mathbb{R}^{dN}~\big|~\Phi(x) < \infty \right\}.
\end{align*}
Then $L$ well-defined as a linear operator on $L^2(E,\mu_{\Phi,\beta})$ with predomain $D = C_c^\infty(E)$. Here $C_c^\infty$ always means the set of all infinitely often differentiable functions having compact support on the respective state space.

Starting with the generator $L$ from \eqref{Eq_generator_N_particle_Langevin}, in \cite{CG10} (or \cite[Ch.~6]{Con11}) non-exploding martingale solutions to \eqref{Eq_N_particle_Langevin} are constructed for a wide class of $N$-particle potentials $\Phi$ that are allowed to have singularities and discontinuous gradient forces. The longtime behavior is afterwards analyzed therein via considering the associated functional analytic objects. In particular, ergodicity with rate of convergence in this concrete setting has been proved making use of the method from \cite{GK08}.  

We do not present new results for this application. However, our aim is to show how the N-particle Langevin dynamics fits into our abstract method for proving ergodicity with rate of convergence. Assuming the same conditions as required to prove ergodicity in \cite{CG10} (or \cite[Ch.~6]{Con11} equivalently) we show that Assumptions (A), (S) and (E1) up to (E4) are indeed satisfied. Thus ergodicity with the associated rate of convergence follows automatically from our abstract results developed at the beginning of this section. Of course, this is expected since our method is the generalization of the concrete methods from \cite{CG10} and \cite{GK08} to the abstract setting. So, let us start first introducing the assumptions for the interacting particle potential as defined in \cite[Sec.~4.2]{CG10} or as in \cite[Sec.~6.6.2]{Con11} equivalently. They allow the construction of the analytic and stochastic part of the dynamics and read as follows. As explained in \cite{CG10}, w.l.o.g.~one may assume that $\alpha=\beta=1$. However, we stay a bit more general to see how these constants enter into the rate of convergence.

\begin{Pot}\index{(C0)-(C3) ergodicity assumptions\\$N$-particle Langevin}
We require the following conditions on our potential $\Phi$.
\begin{itemize}
\item[(i)] $\Phi \colon \mathbb{R}^{dN} \to \mathbb{R} \cup \{\infty\}$ is bounded from below and not identically $=\infty$. Moreover, $e^{-\Phi} \mathrm{d}x $ defines a probability measure on $(\mathbb{R}^{dN},\mathcal{B}(\mathbb{R}^{dN}))$.
\item[(ii)] $e^{-\Phi}$ is continuous on $\mathbb{R}^{dN}$. 
\item[(iii)] $\Phi$ is weakly differentiable on $\widetilde{\mathbb{R}^{dN}}$. Furthermore,  $\nabla_x \Phi \in L^2(\mathbb{R}^{dN}, e^{-\Phi}\mathrm{d}x)$. 
\end{itemize}
\end{Pot}

As outlined above, the following statement holds.

\begin{Pp} \label{Pp_(P)_implies_A_and_S_Langevin}
Let $\Phi$ satisfies (C0). Then the analytic and stochastic dynamical system assumptions (A) and (S) are fulfilled. Moreover, even Conditions (A)' and (S)' are satisfied. Details on the construction of the Langevin semigroup $(T_{t,2})_{t \geq 0}$ from (A)' are described in the upcoming remark.
\end{Pp}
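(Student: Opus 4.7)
The plan is to verify (A)' and (S)' directly, and then to invoke Theorem \ref{Thm_Martingale_Problem_via_regular semigroups} to pass to (A) and (S) automatically. First, (A1) is immediate: $E=\widetilde{\mathbb{R}^{dN}}\times \mathbb{R}^{dN}$ is a Borel subset of $\mathbb{R}^{2dN}$ hence a separable metric space by restriction, and (C0)(i)--(ii) guarantee that $\mu_{\Phi,\beta}$ is a well-defined probability measure on $(E,\mathcal{B}(E))$ (continuity of $e^{-\Phi}$ ensures that the density is a genuine function on $\widetilde{\mathbb{R}^{dN}}$). For (A2) I would take $D=C_c^\infty(E)$, which is a dense subalgebra of $H=L^2(E,\mu_{\Phi,\beta})$, and decompose the formal generator \eqref{Eq_generator_N_particle_Langevin} as
\begin{align*}
S=\frac{\alpha}{\beta}\Delta_\omega-\alpha\,\omega\cdot\nabla_\omega,\qquad A=-\omega\cdot\nabla_x+\frac{1}{\beta}\nabla_x\Phi\cdot\nabla_\omega,
\end{align*}
so that $L=S-A$ on $D$. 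Symmetry and nonpositivity of $(S,D)$ follow from the standard Ornstein--Uhlenbeck identity $(Sf,g)_H=-\frac{\alpha}{\beta}\int_E \nabla_\omega f\cdot\nabla_\omega g\,\mathrm{d}\mu_{\Phi,\beta}$, obtained by integrating by parts against the Gaussian factor in the velocity variable.

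For the antisymmetry of $(A,D)$ I would combine the two parts: integration by parts of $\omega\cdot\nabla_x$ against the density $e^{-\Phi}$ in the $x$-variable produces a boundary/interior term involving $\omega\cdot\nabla_x\Phi$, which is exactly cancelled by the companion term $\frac{1}{\beta}\nabla_x\Phi\cdot\nabla_\omega$ after integrating by parts against the Gaussian in $\omega$. The subtle point here, and the main technical issue in this step, is that $\Phi$ is only weakly differentiable on $\widetilde{\mathbb{R}^{dN}}$ and may blow up at $\partial\widetilde{\mathbb{R}^{dN}}$; however (C0)(iii) gives $\nabla_x\Phi\in L^2(\mathbb{R}^{dN},e^{-\Phi}\mathrm{d}x)$, which is exactly what is needed to make all terms well defined as $L^2(\mu_{\Phi,\beta})$ elements and to legitimate the integration by parts for test functions $f,g\in C_c^\infty(E)$ by a standard cut-off and density argument on $\widetilde{\mathbb{R}^{dN}}$.

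For (A)' the key input is not mine to produce here: it has already been established in \cite{CG10} (see also \cite[Ch.~6]{Con11}) that under (C0) the operator $(L,D)$ admits a closure whose semigroup $(T_{t,2})_{t\geq 0}$ is a $\mu_{\Phi,\beta}$-invariant, conservative, regular sub-Markovian s.c.c.s.\ on $H$, constructed via the theory of generalized Dirichlet forms; its generator $(L_2,D(L_2))$ extends $(L,D)$ and the adjoint semigroup, which corresponds to time reversal of the velocity variable, has generator extending $(\hat L,D)=(S+A,D)$. This delivers (A)' verbatim. For (S)', the same references provide a $\mu_{\Phi,\beta}$-standard right process $\mathbf{M}$ with continuous sample paths and infinite lifetime associated with $(T_{t,2})_{t\geq 0}$, and one defines $\mathbb{P}=\int_E\mathbb{P}_x\,\mathrm{d}\mu_{\Phi,\beta}(x)$ (and analogously $\hat{\mathbb{P}}$ via the dual semigroup); the time-reversal identity $\hat{\mathbb{P}}_T=\mathbb{P}_T\circ\tau_T^{-1}$ is a standard consequence of the stationarity of $\mathbb{P}$ together with the fact that the dual semigroup is exactly the semigroup of the reversed process, compare \cite[Sec.~3]{CG10}.

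Once (A1), (A2), (A)' and (S)' are in place, Theorem \ref{Thm_Martingale_Problem_via_regular semigroups} closes the argument: it delivers (A3), the invariance of $\mu_{\Phi,\beta}$ for both $\mathbb{P}$ and $\hat{\mathbb{P}}$, and the martingale property of the processes $N^{[f],L}$ under $\mathbb{P}$ and $N^{[f],\hat L}$ under $\hat{\mathbb{P}}$ for every $f\in D$, i.e.\ the full force of (A) and (S). The only genuinely analytic work is therefore the algebraic decomposition $L=S-A$ and the integration-by-parts check of (anti)symmetry on $C_c^\infty(E)$; the hard part of the construction --- building the semigroup and the associated right process in the presence of singularities of $\Phi$ --- is imported wholesale from \cite{CG10,Con11}.
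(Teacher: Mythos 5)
Your route matches the paper's: verify (A1)--(A2) directly with the decomposition $S=\tfrac{\alpha}{\beta}\Delta_\omega-\alpha\,\omega\cdot\nabla_\omega$ and $A=-\omega\cdot\nabla_x+\tfrac{1}{\beta}\nabla_x\Phi\cdot\nabla_\omega$, import the $L^2$-semigroup and the associated law from \cite{CG10}/\cite{Con11} to obtain (A)' and (S)', and then invoke Theorem \ref{Thm_Martingale_Problem_via_regular semigroups} to conclude (A3) and the martingale conditions in (S). One small attribution slip: in the Langevin case the references do \emph{not} build the semigroup via generalized Dirichlet forms (that is how the fiber lay-down case is handled). Instead, $(T_{t,2})_{t\geq 0}$ comes from essential m-dissipativity of $(L,C_c^\infty(E))$ on $L^1(E,\mu_{\Phi,\beta})$ together with the $L^1$--$L^2$ correspondence of Proposition \ref{Pp_regular_sccs}, and the associated law $\mathbb{P}$ is constructed using the tools from \cite{BBR06}, cf.\ Remark \ref{Rm_Details_construction_Langevin_semigroup}. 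Also, be slightly careful with your phrasing for (S)': the time-reversal identity $\hat{\mathbb{P}}_T=\mathbb{P}_T\circ\tau_T^{-1}$ is a \emph{hypothesis} of Theorem \ref{Thm_Martingale_Problem_via_regular semigroups}, so it has to be verified first (e.g.\ via associatedness of both $\hat{\mathbb{P}}_T$ and $\mathbb{P}_T\circ\tau_T^{-1}$ with $(\hat T_{t,2})_{t\in[0,T]}$ and uniqueness, Lemma \ref{Lm_properties_semigroup_related_law} plus Remark \ref{Rm_associatedness}), not deduced from the theorem. None of this changes the substance of your argument.
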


\begin{Rm} \label{Rm_Details_construction_Langevin_semigroup}
We mention that $(L,C_c^\infty(E))$ is essentially m-dissipative on $L^1(E,\mu_{\Phi,\beta})$. Its closure generates a $\mu_{\Phi,\beta}$-invariant conservative sub-Markovian s.c.c.s.~$(T_{t,1})_{t \geq 0}$ on $L^1(E,\mu)$. This semigroup exactly corresponds to a regular sub-Markovian $\mu_{\Phi,\beta}$-invariant conservative s.c.c.s.~$(T_{t,2})_{t \geq 0}$ on $L^2(E,\mu_{\Phi,\beta})$ by Proposition \ref{Pp_regular_sccs} below. Now $(T_{t,2})_{t \geq 0}$ is precisely the $N$-particle Langevin semigroup from (A)'\index{Langevin semigroup\\on $L^2(\mu_{\Phi,\beta})$} which is associated with $\mathbb{P}$. This determines the law $\mathbb{P}$\index{law $\mathbb{P}$ associated to!Langevin dynamics} uniquely. Analogous statements are valid also for $(\hat{L},C_c^\infty(E))$, $(\hat{T}_{t,2})_{t \geq 0}$ and $\hat{\mathbb{P}}$. For all details, see \cite[Sec.~6.4]{Con11}.
\end{Rm}

Next we recall the specified conditions on the potential which are required in \cite{CG10} for proving ergodicity with rate of convergence. Analogously to \cite{CG10} we denote
\begin{align*}
H_{\Phi}\df L^2(\widetilde{\mathbb{R}^{dN}}, e^{-\Phi} \mathrm{d}x)=L^2(\mathbb{R}^{dN}, e^{-\Phi} \mathrm{d}x).
\end{align*}
and define $(G_{\Phi},D(G_{\Phi}))$\index{$(G_{\Phi},D(G_{\Phi}))$} to be the closure on $H_{\Phi}$\index{$H_\Phi$} of the operator 
\begin{align*}
\Delta_x -  \nabla_x \Phi \cdot \nabla_x
\end{align*}
with predomain $C_c^\infty(\widetilde{\mathbb{R}^{dN}})$. $H_{\Phi}$ is equipped with the standard scalar product. The ergodicity conditions from \cite{CG10} read as follows.

\begin{Ass2}\index{(C0)-(C3) ergodicity assumptions\\$N$-particle Langevin}
Now assume that $\Phi$ satisfies (C0). We further require the following conditions.\\[\Breite]
\begin{tabularx}{\BreiteTabelle}{lX}
(C1) & For all $1 \leq i,j \leq dN$ the operators $\partial_{x_{i}}\partial_{x_{j}}$ and $\left(\partial_{x_{i}} \Phi\right)  \,\partial_{x_{j}}$ are Kato-bounded on $C_c^\infty(\widetilde{\mathbb{R}^{dN}})$ by the operator $G_{\Phi}$ in $H_{\Phi}$. \\[\Breite]
(C2) & The operator $(G_{\Phi},C_c^\infty(\widetilde{\mathbb{R}^{dN})})$ is essentially selfadjoint in $H_{\Phi}$.\\[\Breite]
(C3) & $G_\Phi$ has a spectral gap\index{spectral gap}\footnote{The assumptions indeed imply a gap in the spectrum $\sigma(G_\Phi)$ of $G_\Phi$, see e.g.~\cite[Theo.~4.3]{KNR08}}, i.e., \index{$\text{\normalfont{gap}}(G_{\Phi})$}
{\begin{align*}
\text{\normalfont{gap}}(G_{\Phi}) \df \inf_{\stackrel{0 \not=f \in D(G_{\Phi})}{\left(1,f\right)_{H_{\Phi}}=0}} \frac{\left(-G_{\Phi} f,f\right)_{H_{\Phi}}}{\|f\|_{H_{\Phi}}^2} > 0.
\end{align*}}\\[\BreiteDrei]
\end{tabularx}
\end{Ass2}

Conditions (C1)-(C3) then indeed imply (E1) up to (E4). Consider Section \ref{subsection_proofs_to_Langevin_dynamics} for all verifications. Corollary \ref{Cor_main_ergodicity_theorem_with_rate_of_convergence} then finally gives the ergodicity theorem  for the $N$-particle Langevin dynamics. We mention again that the statement is not new and already proven in \cite{CG10} or \cite{Con11}. However, it now follows from our abstract ergodicity method implying also quantitative estimate of the constants appearing in the rate of convergence.

\begin{Thm} \label{Ergodicity_theorem_N_particle_Langevin}
Let $d,N \in \mathbb{N}$ and let $\alpha,\beta \in (0,\infty)$ be the constants in \eqref{Eq_N_particle_Langevin}. Assume that the potential $\Phi\colon \mathbb{R}^{dN} \to \mathbb{R} \cup \{ \infty \}$  satisfies Conditions (C0), (C1), (C2) and (C3). Let $\mathbb{P}$ the unique law constructed from the $N$-particle Langevin semigroup which admits $\mu_{\Phi,\beta}$ as invariant measure, see Proposition \ref{Pp_(P)_implies_A_and_S_Langevin}. Let $t>0$ and $f \in L^2(\mu_{\Phi,\beta})$ be arbitrary. We obtain ergodicity with rate of convergence 
\begin{align*}
\left\| \frac{1}{t} \int_0^t f(x_s,\omega_s) \d s  - \mathbb{E}_{\mu_{\Phi,\beta}}\left[f\right] \right\|_{L^2(\mathbb{P})} &\leq  \left( \frac{1}{t} \, \kappa_1 + \frac{1}{\sqrt{t}} \, \kappa_2 \right) \left\| f(X_0)-\mathbb{E}_{\mu_{\Phi,\beta}}\left[f\right] \right\|_{L^2(\mathbb{P})}
\end{align*}
where the constants $\kappa_1,\kappa_2 \in (0,\infty)$ can be specified as
\begin{align*}
\kappa_1=\sqrt{\beta}\, \frac{2}{\sqrt{\text{\normalfont{gap}}(G_{\Phi})}}\quad \mbox{and}\quad \kappa_2= \sqrt{\alpha}\sqrt{\beta} \, \frac{ 2 \sqrt{2}}{\sqrt{\text{\normalfont{gap}}(G_{\Phi})}}  +  \frac{1}{\sqrt{\alpha}} \left( A(\Phi) + \frac{B(\Phi)}{\text{\normalfont{gap}}(G_{\Phi})}\right).
\end{align*}
Here $A(\Phi) \in (0, \infty)$ and $B(\Phi) \in [0,\infty)$ only depend on the choice of $\Phi$; consider Equation \eqref{Specification_of_ni} below for the explicit expressions. 
\end{Thm}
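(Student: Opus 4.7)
The plan is to show that the hypotheses of the abstract Theorem \ref{Cor_main_ergodicity_theorem_with_rate_of_convergence} are satisfied in the present Langevin setup and then to read off the constants. Assumptions (A) and (S) (and even (A)' and (S)') are delivered by Proposition \ref{Pp_(P)_implies_A_and_S_Langevin}, so what remains is to verify (E1)--(E4) and track the dependence on $\alpha$, $\beta$ and $\Phi$. I begin by identifying the splitting $L=S-A$ on $D=C_c^\infty(E)$ with
\[ S=\tfrac{\alpha}{\beta}\Delta_\omega-\alpha\,\omega\cdot\nabla_\omega,\qquad A=-\omega\cdot\nabla_x+\tfrac{1}{\beta}\nabla_x\Phi\cdot\nabla_\omega, \]
so that symmetry of $S$ and antisymmetry of $A$ on $L^2(\mu_{\Phi,\beta})$ follow from integration by parts against $\nu_\beta$ in the $\omega$-variable and against $e^{-\Phi}\,\mathrm{d}x$ in the $x$-variable. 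The kernel $\mathcal{N}(S)$ consists of functions of $x$ alone, so $P$ is the fibrewise conditional expectation against the Gaussian $\nu_\beta$.

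For (E1), the same integration by parts yields $-(Sf,f)_H=\tfrac{\alpha}{\beta}\|\nabla_\omega f\|_H^2$, and the Gaussian Poincar\'e inequality on $\nu_\beta$ (of variance $\beta^{-1}$) applied slicewise in $x$ gives $\|(I-P)f\|_H^2\le\beta^{-1}\|\nabla_\omega f\|_H^2$, so $\Lambda_m=\alpha$. For (E3) I compute, for $f=f(x)\in D_P$,
\[ APf=-\omega\cdot\nabla_x f,\qquad A^2Pf=\sum_{i,j}\omega_i\omega_j\,\partial_{x_i}\partial_{x_j}f-\tfrac{1}{\beta}\nabla_x\Phi\cdot\nabla_x f, \]
and using $\int\omega_i\omega_j\,\mathrm{d}\nu_\beta=\beta^{-1}\delta_{ij}$ one obtains $G=PA^2P=\tfrac{1}{\beta}G_\Phi$ on $D_P$; hypotheses (C2) and (C3) then give essential selfadjointness of $G$ and the Poincar\'e gap $\Lambda_M=\tfrac{1}{\beta}\,\text{\normalfont{gap}}(G_\Phi)$, while $1\in\overline{D_P}^{G}$ and $G1=0$ follow from (C0) together with the conservativity recorded in Remark \ref{Rm_Details_construction_Langevin_semigroup}. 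For (E4) the crucial observation is $S\omega_i=-\alpha\omega_i$, hence $S(APf)=-\alpha APf$ for every $f\in D_P$, and with $\|APf\|_H^2=-(Gf,f)_H$ this yields $(SAPf,APf)_H=\alpha(Gf,f)_H$, i.e.\ the constant $c_3=\alpha$.

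The technically most involved step is (E2). Expanding
\[ LAPf=SAPf-A(APf)=-\alpha APf-A^2Pf \]
one sees that $LAPf$ is a sum of terms whose coefficients in $\omega$ are constants or quadratic polynomials, multiplying $\partial_{x_i}f$, $\partial_{x_i}\partial_{x_j}f$ and $(\partial_{x_i}\Phi)\partial_{x_j}f$; taking the $H$-norm and integrating the $\omega$-moments against $\nu_\beta$ reduces the control of $\|LAPf\|_H$ to controlling $\partial_{x_i}\partial_{x_j}f$ and $(\partial_{x_i}\Phi)\partial_{x_j}f$ in $H_\Phi$, which is precisely what (C1) furnishes in terms of $G_\Phi$ and hence of $G$. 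This is also where the technical closure condition \eqref{Technical_Condition_Ergodicity} has to be checked, and the hard part is verifying $AP(D)\subset\overline{D}^A\cap\overline{D}^S\cap\overline{D}^L$ when $\Phi$ carries singularities: I would handle this by approximating elements of $P(D)$ by functions in $C_c^\infty(\widetilde{\mathbb{R}^{dN}})$ that avoid the singular set of $\Phi$ and then passing to the limit in the respective graph norms, relying on the $\mathrm{d}x$-integrability of $|\nabla_x\Phi|^2 e^{-\Phi}$ from (C0)(iii) to absorb the drift terms. Once (E2) is secured with constants $c_1,c_2$ depending only on $\Phi$, $\alpha$ and $\beta$, specializing the formulas of Theorem \ref{Cor_main_ergodicity_theorem_with_rate_of_convergence} to $\Lambda_m=\alpha$, $\Lambda_M=\text{\normalfont{gap}}(G_\Phi)/\beta$ and $c_3=\alpha$, and absorbing the outer $\sqrt{2}$ into the $\kappa$-constants, produces the stated expression for $\kappa_1$ and collects the $c_1,c_2$ contributions into $\tfrac{1}{\sqrt{\alpha}}\bigl(A(\Phi)+B(\Phi)/\text{\normalfont{gap}}(G_\Phi)\bigr)$ with $A(\Phi)\in(0,\infty)$ and $B(\Phi)\in[0,\infty)$ depending only on $\Phi$ through the Kato constants, completing the proof.
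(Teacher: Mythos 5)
Your overall strategy mirrors the paper's: verify (E1)--(E4) in the Langevin setting, compute $\Lambda_m=\alpha$, $G=\tfrac{1}{\beta}G_\Phi$ hence $\Lambda_M=\text{\normalfont{gap}}(G_\Phi)/\beta$, and $c_3=\alpha$ from $S\omega_i=-\alpha\omega_i$, then plug into Theorem \ref{Cor_main_ergodicity_theorem_with_rate_of_convergence}. These computations match Lemmas \ref{Lm_Verifying_H1_N_particle_Langevin}, \ref{Lm_verification_H2_langevin} and \ref{Lm_Verifying_H3_H4_N_particle_Langevin} and give the claimed form of $\kappa_1,\kappa_2$.

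There is, however, a genuine gap in your handling of the technical closure condition \eqref{Technical_Condition_Ergodicity}. You propose to prove $AP(D)\subset\overline{D}^A\cap\overline{D}^S\cap\overline{D}^L$ by ``approximating elements of $P(D)$ by functions in $C_c^\infty(\widetilde{\mathbb{R}^{dN}})$ that avoid the singular set of $\Phi$.'' This misidentifies the obstruction: $P(D)$ \emph{is} $C_c^\infty(\widetilde{\mathbb{R}^{dN}})$, so its elements already have compact support away from the singularity in $x$. The problem is that for $f\in C_c^\infty(\widetilde{\mathbb{R}^{dN}})$ the function $APf=-\omega\cdot\nabla_x f$ grows linearly in $\omega$ and thus does \emph{not} lie in $D=C_c^\infty(E)$, since it is not compactly supported in the velocity variable. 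The paper's Lemma \ref{Lm_verification_H2_langevin}(i) therefore approximates in $\omega$, using dilated cutoff functions $\varphi_n(\omega)=\varphi(\omega/n)$ with $|\partial_i\varphi_n|\lesssim n^{-1}$ and $|\partial_{ij}\varphi_n|\lesssim n^{-2}$, combined with the Gaussian moments $|\omega|,|\omega|^2\in L^2(\nu_\beta)$ and the integrability $\nabla_x\Phi\in L^2(e^{-\Phi}\,\mathrm{d}x)$ from (C0)(iii), to show that $\varphi_n(\omega)\,\omega\cdot\nabla_x f\to APf$ in the graph norms of $S$, $A$ and $L$. An approximation in $x$ away from the singular set, as you propose, would not address the unboundedness in $\omega$ at all and would leave the inclusion unproved. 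The rest of your argument stands once this step is replaced by the velocity cutoff.
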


\begin{Rm} \label{Rm_Ergodicity_theorem_N_particle_Langevin}
$ $
\begin{itemize}
\item[(i)] Let $(T_{t,2})_{t \geq 0}$ be as in Remark \ref{Rm_Details_construction_Langevin_semigroup}. As described in Remark \ref{Rm_comparison_ergodicity_exponenential_rate} we even obtain mean ergodicity of the semigroup $(T_{t,2})_{t \geq 0}$, see \eqref{exponen_rat_convergence_semigroup_comparision_ergodicity_rate_1} for the statement. Indeed, the existence of the associated $\mu$-standard right process is proven in \cite[Theo.~3.17]{CG10}.
\item[(ii)]
Consider \cite[Sec.~4.2]{CG10} for rather simple criteria implying (C1) up to (C3). Moreover, in \cite[Sec.~4.3]{CG10} specific potentials $\Phi = \Phi_E + \Phi_I$ are introduced fulfilling those criteria; here $\Phi_I$ includes Lennard-Jones type pair interactions and $\Phi_E$ gives rise to external forces driving particles back to the origin. 
\item[(iii)]
We further remark that in the nonsingular situation, the conditions on $\Phi$ from the hypocoercivity theorem for the Langevin equation (see \cite[Theo.~10]{DMS13} and \cite[Theo.~1]{GS14}) indeed imply the conditions from  Theorem \ref{Ergodicity_theorem_N_particle_Langevin} above. This is proven in Proposition \ref{Lm_sufficient_criteria_implying_P_E_fiber_case}. So, our abstract ergodicity method really allows to study much more general situations. In particular, it can be applied to physical relevant particle systems with singular interactions (see (ii)). It is an interesting problem to extend existing hypocoercivity methods, e.g.~the one from \cite[Theo.~10]{DMS13} and \cite[Theo.~1]{GS14}, to this more general singular situation.
\item[(iv)] 
The rate of convergence in dependence of $\alpha$ and $\beta$ is expected by the following heuristic considerations: Observe that for small values of $\alpha$ close to zero one has a bad or very slow decay towards $\mu_{\Phi,\beta}$ since the dynamics nearly behaves deterministic in this situation. Vice versa, in a large damping regime, the $(x_t)_{t \geq 0}$ process can be described approximately by the overdamped Langevin dynamics\index{overdamped Langevin dynamics}\footnote{see \cite[Sec.~2.2.4]{LRS10}: The scaling $\overline{t} = \frac{t}{\alpha}$, $\overline{x}_{\overline{t}}=x_t$, $\overline{W}_{\overline{t}} = \frac{1}{\sqrt{\alpha}} W_t$, $\overline{\omega}_{\overline{t}} = \alpha \, \omega_t$, $\overline{\Phi}(\overline{x}) = \Phi(x)$ formally implies $
\d \overline{x}_{\overline{t}} = \overline{\omega}_{\overline{t}}\, \d \overline{t} ,\quad \frac{1}{\alpha^2} \d \overline{\omega}_{\overline{t}} = - \overline{\omega}_{\overline{t}} \, \d \overline{t} - \frac{1}{\beta} \, \nabla \overline{\Phi}(\overline{x}_{\overline{t}})\,\d \overline{t} +  \sqrt{\frac{2}{\beta}}\, \d \overline{W}_{\overline{t}}$. We get $\frac{1}{\alpha^2} \d \overline{\omega}_{\overline{t}} \to 0$ as $\alpha \uparrow \infty$. So, setting $\frac{1}{\alpha^2} \d \overline{\omega}_{\overline{t}} = 0$ for $\alpha$ large, solving the equation w.r.t.~$\d \overline{x}_{\overline{t}} = \overline{\omega}_{\overline{t}} \, \d \overline{t} $ and rescaling yields \eqref{overdamped_Langevin_Highfriction}.} (or a macroscopic evolution) in $\mathbb{R}^d$ given as
\begin{align} \label{overdamped_Langevin_Highfriction}
\mathrm{d}x_t = -\frac{1}{\alpha\,\beta} \, \nabla \Phi(x_t) \, \mathrm{d}t + \sqrt{\frac{2}{\alpha \, \beta}}\, \mathrm{d}W_t.
\end{align}
with formal generator $L^{\text{ov}}= \frac{1}{\alpha\,\beta}\Delta -\frac{1}{\alpha\,\beta} \nabla \Phi \cdot \nabla$. If $\Phi$ fulfills a Poincar\'e inequality with constant $\Lambda$, it is then easy to see and well-known that the s.c.c.s.~in $L^2(e^{-\Phi}\d x)$ associated with $L^{\text{ov}}$ (provided it exists) is mean ergodic with rate $\frac{\alpha \, \beta}{\Lambda \, t}$. So, the convergence rate is expected to become as worse as possible when $\alpha \uparrow \infty$ (or $\beta \uparrow \infty$). The case of increasing $\beta$ finally means that $\nu_\beta$ tends to the Dirac distribution at the point $0$ in the velocity $\omega$. And due to the original representation $\mathrm{d}x_t=\omega_t  \d t$,  the $(x_t)_{t \geq 0}$ process is expected to reach its stationary distribution $e^{-\Phi} \d x$ then only very slow; altogether, we see that these phenomena on the convergence to equilibrium in dependence of $\alpha$ and $\beta$ are rigorously proven and confirmed by Theorem \ref{Ergodicity_theorem_N_particle_Langevin} above. Compare with \cite[Theo.~1]{GS14} where the same qualitative convergence behavior in dependence of $\alpha$ in a hypocoercivity setting is shown. 
\item[(v)]
We finally remark, that completely analogously as in \cite{CG10} one may also consider periodic boundaries in the state space for the position variables by replacing $(\mathbb{R}^{d})^N$ through $M^N$, where $M=\prod_{i=1}^d \mathbb{R} / {r_i \mathbb{Z}}$, $r_i > 0$, $i=1,\ldots,d$.
\end{itemize}
\end{Rm}

\subsection{Application to the (generalized) fiber lay-down dynamics} \label{Results_Applications_generalized_fiber_lay_down}

In the second application we consider the so-called generalized fiber lay-down dynamics which is described by the following manifold-valued Stratonovich stochastic differential equation with state space $\mathbb{M}=\mathbb{R}^d \times \mathbb{S}$ of the form
\begin{align} \label{Fiber_Model_Intro}
&\mathrm{d}x_t = \omega_t \, \mathrm{dt} \\
&\mathrm{d}\omega_t = - \frac{1}{d-1}(I- \omega_t \otimes \omega_t) \,\nabla \Phi (x_t) \, \mathrm{dt} + \sigma\,(I-\omega_t \otimes \omega_t) \circ \mathrm{d}W_t. \nonumber
\end{align}
The associated Kolmogorov generator $L$ reads (at first formally) as
\begin{align} \label{Fiber_Operator_Intro}
L= \omega \cdot \nabla_x - \text{grad}_{\mathbb{S}} \Psi \cdot \nabla_\omega + \frac{1}{2} \sigma^2 \, \Delta_{\mathbb{S}}\,~\mbox{ with }\,~\Psi(x,\omega)= \frac{1}{d-1} \,\nabla_x \Phi(x) \cdot \omega.
\end{align}
Here  $d \in \mathbb{N}$, $d \geq 2$, $W$ is a standard $d$-dimensional Brownian motion, $z \otimes y := z y^T$ for $z,y \in \mathbb{R}^{d}$ and $y^T$ is the transpose of $y$. $\mathbb{S}=\mathbb{S}^{d-1}$ denotes the unit sphere with respect to the Euclidean norm in $\mathbb{R}^d$, $\text{grad}_{\mathbb{S}} \,\psi \cdot \nabla_\omega$ or simply $\text{grad}_{\mathbb{S}} \,\psi$ the spherical gradient of some $\psi \in C^\infty(\mathbb{S})$ and $\Delta_{\mathbb{S}}$ the Laplace-Beltrami operator on $\mathbb{S}$. $x$ always indicates the space variable in $\mathbb{R}^d$ and $\omega$ the velocity component in $\mathbb{S} \subset \mathbb{R}^d$ where all vectors in Euclidean space are understood as column vectors. $\Phi\colon \mathbb{R}^d \to \mathbb{R}$ is a potential function specified later on and $\sigma$ a finite constant with $\sigma \geq 0$. Again $\left(\cdot,\cdot\right)_{\text{euc}}$ or $\cdot$ denotes the Euclidean scalar product and $\nabla$ (or $\nabla_x$ respectively) the usual gradient operator in $\mathbb{R}^d$. $\nabla^2_x$ is the Hessian matrix in Euclidean space.

All details on this model can e.g.~be found in \cite{GKMS12}, \cite{GS12} or \cite{GS12B} as well as in the related articles \cite{KMW12} and \cite{GKMW07}. In these articles the stochastic equation has been developed for modeling the lay-down of fibers in the industrial production process of nonwovens. As noticed in the introduction (and see \cite{GS12}) it can alternatively be seen as the analogue of the classical Langevin equation for a particle moving with spherical velocities. By using this interpretation, $\omega_t$ then denotes the attached velocity vector of constant Euclidean norm of some particle moving in $\mathbb{R}^d$ with position-coordinates prescribed by $x_t$. The term $\sigma\,(I-\omega_t \otimes \omega_t) \circ \mathrm{d}W_t$ in the velocity equation describes the stochastic pertubation of the particle given through some Brownian motion on the unit sphere with noise amplitude $\sigma$. Finally, $-\nabla_x \Phi (x_t)$ as usual models the force acting on the particle. However, this forcing term has to be tangential to $\mathbb{S}$ yielding the remaining deterministic term in the equation for $\mathrm{d}\omega_t$. We remark that the constant $\frac{1}{d-1}$ is introduced for having a convenient expression for the density of the invariant measure later on.

The measure $\mu_{\Phi}$ is now defined on $(\mathbb{M},\mathcal{B}(\mathbb{M}))$ as
\begin{align*}
\mu_{\Phi} =  e^{-\Phi(x)} \,\mathrm{d} x \otimes \nu
\end{align*}
where $\nu$ denotes the normalized surface measure on the unit sphere $\mathbb{S}$. 

As already remarked in the introduction, ergodicity with rate of convergence of the two-dimensional fiber lay-down dynamics has already been obtained in \cite{GK08}. The Kolmogorov operator of interest therein is  the two-dimensional version of the fiber lay-down generator \eqref{Fiber_Operator_Intro} equivalently formulated on the space $\mathbb{R}^2 \times \mathbb{R} / 2 \pi \mathbb{Z}$. Recall that our ergodicity method is the generalization of the concrete method from \cite{GK08} to an abstract setting. Thus, of course, we expect to obtain ergodicity with rate of convergence for the generalized fiber lay-down dynamics as well by applying our abstract ergodicity framework from Section \ref{Results_abstract_ergodicity_method}. Consider Theorem \ref{Ergodicity_theorem_generalized_fiber_lay_down} below for the final statement. Before formulating it, let us start introducing the basic conditions required for $\Phi$.

\begin{Pot} Assume that $\Phi\colon \mathbb{R}^{d} \to \mathbb{R} $ is locally Lipschitz continuous, bounded from below and that $e^{-\Phi} \mathrm{d}x$ is a probability measure on $(\mathbb{R}^{d},\mathcal{B}(\mathbb{R}^d))$
\end{Pot}

Due to local Lipschitz continuity, $\Phi$ is weakly differentiable and we fix a version of $\nabla_x \Phi$ in the following. Thus the expression for $L$ from \eqref{Fiber_Operator_Intro} is indeed well-defined, see also \cite[Sec.~3]{GS12B} for more details. Assuming (C0) one obtains the following statement. Consider Section \ref{subsection_proofs_to_generalized_fiber_lay_down} for its proof.

\begin{Pp} \label{Pp_(P)_implies_A_and_S_fiber_lay_down}
Let $\Phi$ satisfies (C0). Then the analytic and stochastic dynamical system assumptions (A) and (S) are fullfilled. Moreover, even Conditions (A)' and (S)' are satisfied. Here the desired s.c.c.s.~$(T_{t,2})_{t \geq 0}$ from (A)' is generated by the closure of the essentially m-dissipative operator $(L,C_c^\infty(\mathbb{M}))$ on $L^2(\mathbb{M},\mu_\Phi)$.
\end{Pp}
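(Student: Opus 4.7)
The plan is to establish (A)' and (S)' directly; Theorem \ref{Thm_Martingale_Problem_via_regular semigroups} then delivers (A) and (S) automatically. Condition (A1) is immediate, since $\mathbb{M} = \mathbb{R}^d \times \mathbb{S}$ is a separable metric space and $\mu_\Phi = e^{-\Phi(x)}\,\mathrm{d}x \otimes \nu$ is a probability measure on it by (C0). For (A2) I take $D := C_c^\infty(\mathbb{M})$ and decompose
\begin{equation*}
S \df \tfrac{1}{2}\sigma^2 \Delta_{\mathbb{S}}, \qquad A \df -\omega \cdot \nabla_x + \grad \Psi \cdot \nabla_\omega ,
\end{equation*}
so that $L = S - A$ on $D$ in accordance with \eqref{Fiber_Operator_Intro}. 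Symmetry and nonpositivity of $(S,D)$ on $L^2(\mathbb{M}, \mu_\Phi)$ is standard: $S$ differentiates only in the compact $\omega$-variable, so the weight $e^{-\Phi(x)}$ is inert. Antisymmetry of $(A, D)$ follows by integration by parts against $\mu_\Phi$: the $x$-transport piece contributes $\int f g\,(\omega \cdot \nabla_x \Phi)\,\mathrm{d}\mu_\Phi$, which is cancelled by the spherical integration by parts of $\grad\Psi \cdot \nabla_\omega$ thanks to $\grad\Psi = \tfrac{1}{d-1}(I - \omega\otimes\omega)\nabla_x\Phi$ and the identity $\operatorname{div}_{\mathbb{S}}(\omega) = d-1$; this is exactly the calculation carried out in \cite{GS12B}.

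The principal obstacle is the essential m-dissipativity of $(L, \dom)$ on $L^2(\mathbb{M}, \mu_\Phi)$ under only the mild regularity provided by (C0). This has been established in \cite{GS12} and \cite{GS12B} for precisely the generator \eqref{Fiber_Operator_Intro} via an approximation of $\Phi$ by smooth potentials combined with a Kato-type perturbation of $S$, and I invoke the result directly. Its closure $(L_2, D(L_2))$ then generates an s.c.c.s.\ $(T_{t,2})_{t \ge 0}$ on $L^2(\mathbb{M}, \mu_\Phi)$. Sub-Markovianity follows from a Beurling--Deny type criterion, since $S$ generates a sub-Markovian semigroup on the sphere and $-A$ is a conservative transport field tangent to $\mathbb{M}$. $\mu_\Phi$-invariance is the statement $\int L f\,\mathrm{d}\mu_\Phi = 0$ for $f \in D$, immediate from integration by parts for $S$ and antisymmetry of $A$ (the latter applied to the pair $(f,1)$ via a standard cut-off in $x$), and conservativity follows from $1 \in D(L_2)$ with $L_2 1 = 0$. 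Regularity is automatic by structural symmetry: the same arguments applied to $(\hat L, D) = (S+A, D)$---which is the generator associated with \eqref{Fiber_Model_Intro} run with $\omega$ replaced by $-\omega$---yield a $\mu_\Phi$-invariant conservative sub-Markovian s.c.c.s.\ that, by $L^2$-duality on $D$, must coincide with the adjoint $(\hat T_{t,2})_{t \ge 0}$. This establishes (A)'.

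For (S)' I rely on the stochastic counterpart of the construction in \cite{GS12} and \cite{GS12B}, which produces a $\mu_\Phi$-standard right process $\mathbf{M} = ((X_t)_{t \ge 0}, (\mathbb{P}_z)_{z \in \mathbb{M}})$ with continuous sample paths and infinite lifetime whose transition semigroup is $(T_{t,2})_{t \ge 0}$; alternatively, this process is accessible via the generalized Dirichlet form theory of \cite{St99} or through \cite{BBR06}. Setting $\mathbb{P} := \int_{\mathbb{M}} \mathbb{P}_z\,\mathrm{d}\mu_\Phi(z)$ gives a probability law on $C([0, \infty); \mathbb{M})$ associated with $(T_{t,2})_{t \ge 0}$; the analogous construction for $(\hat T_{t,2})_{t \ge 0}$ yields $\hat{\mathbb{P}}$. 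The time-reversal identity $\hat{\mathbb{P}}_T = \mathbb{P}_T \circ \tau_T^{-1}$ for every $T \ge 0$ I verify on finite-dimensional distributions: both sides can be expressed via the associatedness formula as iterated actions of $T_{\cdot,2}$ and $\hat T_{\cdot,2}$, and the $\mu_\Phi$-duality $(T_{t,2}f, g)_{L^2(\mu_\Phi)} = (f, \hat T_{t,2} g)_{L^2(\mu_\Phi)}$ converts one into the other. This completes (S)', and Theorem \ref{Thm_Martingale_Problem_via_regular semigroups} then delivers (A) and (S).
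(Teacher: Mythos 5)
Your overall route coincides with the paper's: establish (A)' and (S)' directly, then invoke Theorem \ref{Thm_Martingale_Problem_via_regular semigroups}. The ingredients are the same -- the decomposition $L=S-A$, essential m-dissipativity from \cite{GS12B}, the unitary $\omega \mapsto -\omega$ for the dual operator, a right process via generalized Dirichlet form theory, and the time-reversal identity via the duality of the transition semigroups. Two steps, however, are not closed as written.

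First, the sub-Markovianity of $(T_{t,2})_{t\ge 0}$. You appeal to a ``Beurling--Deny type criterion, since $S$ generates a sub-Markovian semigroup on the sphere and $-A$ is a conservative transport field.'' Beurling--Deny criteria pertain to symmetric Dirichlet forms, and the present generator is non-symmetric; moreover, sub-Markovianity of the semigroups of $S$ and $-A$ separately does not transfer to $S-A$ by any Trotter-type argument without substantial extra work. The argument that actually closes this step is structural: $(L,D)$ is a second-order partial differential operator without zero-order term (an abstract diffusion operator in the sense of \cite[Def.~1.5]{Eb99}), hence a Dirichlet operator by \cite[Lem.~1.9]{Eb99}, and together with $\mu_\Phi$-invariance and essential m-dissipativity that reference yields sub-Markovianity of the generated s.c.c.s.\ directly. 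Second, the regularity conclusion. Establishing that the s.c.c.s.\ generated by the closure of $(\hat L, D)$ \emph{is} the adjoint semigroup requires more than ``$L^2$-duality on $D$''; duality on the core gives only one inclusion of generators. The step is finished by noting that both are m-dissipative extensions of the densely defined dissipative operator $(\hat L, D)$ and that m-dissipative operators admit no proper dissipative extensions, so they agree. With those two repairs the argument is complete and matches the paper's.
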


Assume that $\Phi$ fulfills $(C0)$. Analogously as in the Section \ref{Results_Applications_N_particle_Langevin}, $(G_\Phi,D(G_\Phi))$ is defined to be the closure of the operator 
\begin{align*}
\Delta_x -\nabla_x \Phi \cdot \nabla_x
\end{align*}
with predomain $C_c^\infty(\mathbb{R}^d)$ in $L^2(e^{-\Phi}\mathrm{d}x)$. Similarly as for the case of the $N$-particle Langevin dynamics we introduce conditions implying ergodicity later on.

\begin{Ass2}
Let $d \in \mathbb{N}$ with $d \geq 2$ and sssume that $\Phi$ satisfies (C0). We further require the following conditions.\\[\Breite]
\begin{tabularx}{\BreiteTabelle}{lX}
(C1)&For all $1 \leq i,j \leq d$ the operators $\partial_{x_{i}}\partial_{x_{j}}$ and $(\partial_{x_{i}} \Phi ) \,\partial_{x_{j}}$ are Kato-bounded on $C_c^\infty(\mathbb{R}^{d})$ by the operator $G_{\Phi}$ in $H_{\Phi}$.\\[\Breite]
(C2)& The operator $(G_{\Phi},C_c^\infty(\mathbb{R}^{d}))$ is essentially selfadjoint on $H_{\Phi}$.\\[\Breite]
(C3)& $G_\Phi$\index{$\text{\normalfont{gap}}(G_{\Phi})$} has a spectral gap, i.e., 
{\begin{align*}
\text{\normalfont{gap}}(G_{\Phi}) \df \inf_{\stackrel{0 \not=f \in D(G_{\Phi})}{\left(1,f\right)_{H_{\Phi}}=0}} \frac{\left(-G_{\Phi} f,f\right)_{H_{\Phi}}}{\|f\|_{H_{\Phi}}^2} > 0.
\end{align*}}\\[\BreiteDrei]
\end{tabularx}
\end{Ass2}

Assuming (C0)-(C4) one gets the final ergodicity theorem for the generalized fiber lay-down dynamics, or the spherical velocity Langevin process equivalently. The proofs are given in Section \ref{subsection_proofs_to_generalized_fiber_lay_down}. As desired, the theorem below contains the ergodicity result and the estimation for the rate of convergence derived in \cite{GK08} for the two-dimensional version of the fiber lay-down dynamics as special case.

\begin{Thm} \label{Ergodicity_theorem_generalized_fiber_lay_down}
Let $d \in \mathbb{N}$, $d \geq 2$ and $\sigma \in (0,\infty)$ the noise amplitude in \eqref{Fiber_Model_Intro}. Assume that $\Phi\colon \mathbb{R}^{d} \to \mathbb{R}$  satisfies (C0), (C1), (C2) and (C3).  Then there exists a unique probability law $\mathbb{P}$\index{law $\mathbb{P}$ associated to!fiber lay-down dynamics} associated on $(C([0,\infty),\mathbb{M})$ with the fiber lay-down semigroup $(T_{t,2})_{t \geq 0}$, see Proposition \ref{Pp_(P)_implies_A_and_S_fiber_lay_down}. Let $t >0$ and $f \in L^2(\mathbb{M}, \mu_\Phi)$ be arbitrary. We obtain
\begin{align*}
\left\| \frac{1}{t} \int_0^t f(x_s,\omega_s) \d s  - \mathbb{E}_{\mu_\Phi}\left[f\right] \right\|_{L^2(\mathbb{P})} &\leq  \left( \frac{1}{t} \, \kappa_1 + \frac{1}{\sqrt{t}} \, \kappa_2 \right) \left\| f(x_0,\omega_0)-\mathbb{E}_{\mu_\Phi}\left[f\right] \right\|_{L^2(\mathbb{P})}
\end{align*}
Here the constants $\kappa_1,\kappa_2$ can be specified as
\begin{align*}
\kappa_1=\frac{2\, \sqrt{d}}{\sqrt{\text{\normalfont{gap}}(G_{\Phi})}}\quad \mbox{and}\quad \kappa_2= \sigma \, \frac{ 2 \,\sqrt{d\,(d-1)}}{\sqrt{\text{\normalfont{gap}}(G_{\Phi}) }}  +  \frac{1}{\sigma} \left( A(\Phi) + \frac{B(\Phi)}{\text{\normalfont{gap}}(G_{\Phi})}\right)
\end{align*}
where $A(\Phi) \in (0, \infty)$, $B(\Phi) \in [0,\infty)$ only depend on $\Phi$ (and on the dimension $d$); see \eqref{Specification_of_mi} for explicit expressions. 
\end{Thm}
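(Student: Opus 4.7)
The plan is to cast the generalized fiber lay-down dynamics into the abstract framework of Section \ref{Results_abstract_ergodicity_method} and invoke Theorem \ref{Cor_main_ergodicity_theorem_with_rate_of_convergence}. All work is in verifying Assumptions (E1)--(E4); the constants $\kappa_1,\kappa_2$ in the statement will then be read off from the abstract formulas together with the concrete values of $\Lambda_m$, $\Lambda_M$, $c_1$, $c_2$ and $c_3$ produced by the verification.

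First I would fix the decomposition $L=S-A$ on $D=C_c^\infty(\mathbb{M})$ with
$S=\tfrac{1}{2}\sigma^2\Delta_{\mathbb{S}}$ and
$A=-\omega\cdot\nabla_x+\mathrm{grad}_{\mathbb{S}}\Psi\cdot\nabla_\omega$,
and use Proposition \ref{Pp_(P)_implies_A_and_S_fiber_lay_down} to obtain (A), (S) (and in fact (A)', (S)'); this also yields the uniqueness of the associated law $\mathbb{P}$. Next I would verify (E1): because $S$ acts only in the spherical variable and its kernel on $L^2(\mathbb{S},\nu)$ consists of the constants, the projection $P$ is the fibre-wise average $Pf(x,\omega)=\int_{\mathbb{S}}f(x,\cdot)\,\mathrm{d}\nu$, and the classical Poincar\'e inequality on $\mathbb{S}^{d-1}$ (smallest non-zero eigenvalue of $-\Delta_{\mathbb{S}}$ equal to $d-1$) combined with Fubini in $e^{-\Phi}\,\mathrm{d}x\otimes\mathrm{d}\nu$ gives $\Lambda_m=\tfrac{1}{2}\sigma^2(d-1)$. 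Essential self-adjointness of $(S,D)$ follows since $\Delta_{\mathbb{S}}$ is essentially self-adjoint on $C^\infty(\mathbb{S})$.

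The key computation is the identification of the macroscopic operator. For $f=f(x)\in D_P$ one has $APf=-\omega\cdot\nabla_x f$, and a direct calculation using $\mathrm{grad}_{\mathbb{S}}\Psi=\tfrac{1}{d-1}(I-\omega\otimes\omega)\nabla_x\Phi$ together with the sphere moment $\int_{\mathbb{S}}\omega_i\omega_j\,\mathrm{d}\nu=\tfrac{1}{d}\delta_{ij}$ yields
\begin{equation*}
PA^2Pf=\tfrac{1}{d}\bigl(\Delta_x f-\nabla_x\Phi\cdot\nabla_x f\bigr)=\tfrac{1}{d}G_\Phi f.
\end{equation*}
Thus $G=\tfrac{1}{d}G_\Phi$, essential self-adjointness of $(G,D_P)$ and the spectral gap in (E3) follow at once from (C2) and (C3) with $\Lambda_M=\tfrac{1}{d}\,\mathrm{gap}(G_\Phi)$, and $P\mathbf{1}=\mathbf{1}$, $G\mathbf{1}=0$ are immediate. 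For (E4) I would exploit the eigenfunction identity $\Delta_{\mathbb{S}}(v\cdot\omega)=-(d-1)(v\cdot\omega)$ applied with $v=\nabla_x f$: it gives $SAPf=-\tfrac{1}{2}\sigma^2(d-1)\,APf$, hence $PASAP=-\tfrac{1}{2}\sigma^2(d-1)\,G$ on $D$, i.e.\ (E4) with $c_3=\tfrac{1}{2}\sigma^2(d-1)$.

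The main obstacle will be (E2): the Kato-bound on $LAP$. For $f=f(x)\in D_P$, $LAPf=(S-A)(-\omega\cdot\nabla_x f)$; the $S$-piece reduces to $\tfrac{1}{2}\sigma^2(d-1)\,\omega\cdot\nabla_x f$ by the eigenfunction identity above, while the $A$-piece expands into three pieces, namely $\omega\otimes\omega:\nabla^2_x f$, a scalar multiple of $\nabla_x\Phi\cdot\nabla_x f$, and a term quadratic in $\omega$ involving $(\omega\cdot\nabla_x\Phi)(\omega\cdot\nabla_x f)$. Here I would take the $H=L^2(\mathbb{M},\mu_\Phi)$-norm, use $|\omega|=1$ to bound the $\omega$-coefficients uniformly, and apply the Kato-boundedness of $\partial_{x_i}\partial_{x_j}$ and $(\partial_{x_i}\Phi)\partial_{x_j}$ by $G_\Phi$ from (C1) together with the identification $G=\tfrac{1}{d}G_\Phi$; this produces explicit constants $c_1=c_1(\Phi,d)$ and $c_2=c_2(\Phi,d)$, packaged into the quantities $A(\Phi),B(\Phi)$ of \eqref{Specification_of_mi}. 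With (E1)--(E4) in hand and the above values of $\Lambda_m$, $\Lambda_M$, $c_1$, $c_2$, $c_3$ inserted into Theorem \ref{Cor_main_ergodicity_theorem_with_rate_of_convergence} (noting the extra $\sqrt 2$ factor absorbed in the fiber constants), the stated rate follows.
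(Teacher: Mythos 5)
Your proposal is correct and follows the paper's proof essentially verbatim: you invoke Theorem \ref{Cor_main_ergodicity_theorem_with_rate_of_convergence} after verifying (E1)--(E4) via the spherical Poincar\'e inequality (giving $\Lambda_m=\tfrac12\sigma^2(d-1)$), the identification $G=\tfrac1d G_\Phi$ (giving $\Lambda_M=\mathrm{gap}(G_\Phi)/d$), the eigenrelation $\Delta_{\mathbb{S}}(v\cdot\omega)=-(d-1)(v\cdot\omega)$ (giving $c_3=\tfrac12\sigma^2(d-1)$), and the expansion of $LAPf$ together with the Kato-bound from (C1) (giving $c_1,c_2$), which is exactly the content of Propositions \ref{Lm_Verifying_H1_fiber_lay_down}, \ref{Lm_verification_E3_fiber_lay_down} and \ref{Lm_Verifying_H3_H4_fiber_lay_down}. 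The outer $\sqrt{2}$ of the abstract theorem is indeed absorbed into the stated $\kappa_1,\kappa_2$, exactly as you note.
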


\begin{Rm}
Note that for small and large values of $\sigma$ the rate of convergence increases and an optimal rate of convergence is expected for $\sigma$ lying in an intermediate range of values. The same characteristic behavior of rate of convergence in dependence of $\sigma$ is derived in the hypocoercivity setting in \cite{DKMS11}, see also \cite{GKMS12} and \cite{GS12B} for the $d$-dimensional case. Finally, consider \cite{GK08} and \cite{DKMS11} for some numerical simulations that confirm the rate of convergence in dependence of $\sigma$.
\end{Rm}

Finally, we shall give sufficient criteria implying (C0)-(C4). We show that the same conditions for $\Phi$ as already assumed in the hypocoercivity setting in \cite{GS12B}, \cite[Theo.~10]{DMS13} or \cite[Theo.~1]{GS14} are sufficient for the method considered here. For simple criteria on the existence of Poincar\'e inequalities itself we refer e.g.~to \cite[A.~19]{Vil09}, \cite{BBCG08} or \cite{Wan99}. We emphasize that potentials of the form $\Phi(x)=\sum_{i=1}^d a_i \,x_i^2$, $a_i > 0$, which are relevant for the fiber lay-down application, satisfy (after normalization) the necessary conditions below.

\begin{Pp} \label{Lm_sufficient_criteria_implying_P_E_fiber_case}
Let $d \in \mathbb{N}$ (and $d \geq 2$ in the fiber lay-down case). Assume that the potential $\Phi\colon \mathbb{R}^d \rightarrow \mathbb{R}$ is bounded from below, satisfies $\Phi \in C^{2}(\mathbb{R}^d)$ and that $e^{-\Phi} \mathrm{d}x$ is a probability measure on $(\mathbb{R}^d,\mathcal{B}(\mathbb{R}^d))$. Moreover, the measure $e^{-\Phi}\mathrm{d}x$ is assumed to satisfy a Poincar\'e inequality of the form
\begin{align*}
\big\|\nabla f \big\|^2_{L^2(e^{-\Phi}\mathrm{d}x)} \geq \Lambda  \, \left\|\, f - \int_{\mathbb{R}^d} f \, e^{-\Phi}\mathrm{d}x \,\right\|^2_{L^2(e^{-\Phi}\mathrm{d}x)} \mbox{~for all $f \in C_c^\infty(\mathbb{R}^d)$}
\end{align*}
where $\Lambda \in (0,\infty)$. Furthermore, assume that there exists $c < \infty$ such that
\begin{align*}
\left| \nabla^2 \Phi (x) \right| \leq c \left( 1+ \left| \nabla \Phi(x) \right|\right) \quad \mbox{for all } x \in \mathbb{R}^d.
\end{align*}
Then indeed Conditions (C0), (C1), (C2) and (C3) from the Langevin dynamics (see Section \ref{Results_Applications_N_particle_Langevin} with $N=1$) or from the fiber lay-down case from above are fulfilled.
\end{Pp}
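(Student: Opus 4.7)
The strategy is to check the four conditions (C0)--(C3) in turn, noting that most of them reduce to standard properties of the weighted Laplacian $G_\Phi = \Delta - \nabla\Phi\cdot\nabla$ on $L^2(e^{-\Phi}\mathrm{d}x)$ and that all new content is concentrated in the verification of the Kato bounds (C1).

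\textbf{Step 1 (Condition (C0)).} For the fiber lay-down version, local Lipschitz continuity, boundedness below, and the probability normalisation of $e^{-\Phi}\mathrm{d}x$ are either assumed or immediate from $\Phi\in C^2$. For the Langevin version ($N=1$) one additionally needs $\nabla\Phi\in L^2(e^{-\Phi}\mathrm{d}x)$: I would multiply a cut-off version of $|\nabla\Phi|^2$ by $e^{-\Phi}$ and integrate by parts, using the identity $|\nabla\Phi|^2 e^{-\Phi} = -\mathrm{div}(e^{-\Phi}\nabla\Phi) + e^{-\Phi}\Delta\Phi$ together with the hypothesis $|\nabla^2\Phi|\leq c(1+|\nabla\Phi|)$ and Cauchy--Schwarz to obtain a quadratic inequality of the form $y^2 \leq C_1 + C_2\,y$ for $y=\|\nabla\Phi\|_{L^2(e^{-\Phi}\mathrm{d}x)}$, which bounds $y$ after removing the cut-off.

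\textbf{Step 2 (Conditions (C2) and (C3)).} Essential self-adjointness of $(G_\Phi,C_c^\infty(\mathbb{R}^d))$ in $H_\Phi$ is the Dirichlet-form statement that $C_c^\infty$ is a core for the generator of the symmetric closure of $(f,g)\mapsto \int\nabla f\cdot\nabla g\,e^{-\Phi}\mathrm{d}x$. Under $\Phi\in C^2$ together with the Kato bounds obtained in Step~3, this is standard and can be cited directly (it is used repeatedly in the literature quoted in the excerpt, e.g.~\cite{DMS13}, \cite{GS12B}). Condition (C3) is just the Poincar\'e inequality in disguise: for $f\in C_c^\infty$ one has $(-G_\Phi f,f)_{H_\Phi}=\|\nabla f\|^2_{H_\Phi}$ and the assumption therefore yields $\mathrm{gap}(G_\Phi)\geq \Lambda>0$.

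\textbf{Step 3 (Condition (C1)).} This is the heart of the proof. I would use the integrated Bochner identity
\begin{equation*}
\|G_\Phi f\|^2_{H_\Phi} = \int|\mathrm{Hess}(f)|^2_{HS}\,e^{-\Phi}\mathrm{d}x + \int \mathrm{Hess}(\Phi)(\nabla f,\nabla f)\,e^{-\Phi}\mathrm{d}x,
\end{equation*}
valid for $f\in C_c^\infty(\mathbb{R}^d)$ (a $\Gamma_2$-computation with integration by parts). From this, $\sum_{i,j}\|\partial_i\partial_j f\|^2_{H_\Phi}$ is bounded by $\|G_\Phi f\|^2_{H_\Phi}$ plus $\int|\mathrm{Hess}(\Phi)||\nabla f|^2 e^{-\Phi}\mathrm{d}x$. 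The growth assumption $|\nabla^2\Phi|\leq c(1+|\nabla\Phi|)$ reduces control of the last integral to control of $\int|\nabla\Phi||\nabla f|^2 e^{-\Phi}\mathrm{d}x$, which by Cauchy--Schwarz is dominated by $(\int|\nabla\Phi|^2|\nabla f|^2 e^{-\Phi}\mathrm{d}x)^{1/2}\|\nabla f\|_{H_\Phi}$. The integral $\int|\nabla\Phi|^2|\nabla f|^2 e^{-\Phi}\mathrm{d}x$ is handled by a second integration by parts using the same divergence identity as in Step~1, producing terms of type $\int\mathrm{Hess}(f)(\nabla f,\nabla\Phi)\,e^{-\Phi}\mathrm{d}x$ and $\int|\nabla f|^2\Delta\Phi\,e^{-\Phi}\mathrm{d}x$; by Cauchy--Schwarz and Young's inequality, the Hessian term can be absorbed into a small multiple of $\sum_{i,j}\|\partial_i\partial_j f\|^2_{H_\Phi}$. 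Finally $\|\nabla f\|_{H_\Phi}^2 = (-G_\Phi f,f)_{H_\Phi}\leq \tfrac{1}{2\varepsilon}\|G_\Phi f\|^2_{H_\Phi} + \tfrac{\varepsilon}{2}\|f\|^2_{H_\Phi}$, so every lower-order norm collapses to the required form $a\|G_\Phi f\|_{H_\Phi}+b\|f\|_{H_\Phi}$. This gives $G_\Phi$-boundedness of $\partial_i\partial_j$; then $G_\Phi$-boundedness of $(\partial_i\Phi)\partial_j$ follows by writing $\nabla\Phi\cdot\nabla f = \Delta f - G_\Phi f$ component-wise and applying the bound just obtained.

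\textbf{Main obstacle.} The delicate point is the chain of integration by parts needed to estimate $\int|\nabla\Phi|^2|\nabla f|^2 e^{-\Phi}\mathrm{d}x$: the argument is circular unless one is careful to keep the Hessian term on a separate footing from the mixed term, so that Young's inequality can absorb it without reintroducing $|\nabla\Phi|^2$ factors. Once this algebraic bookkeeping is done cleanly, the Poincar\'e inequality (C3) closes the estimate by controlling $\|\nabla f\|_{H_\Phi}$ by $\|G_\Phi f\|_{H_\Phi}$ and $\|f\|_{H_\Phi}$, and the proposition follows.
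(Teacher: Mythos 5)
Your proposal is correct in substance, but it takes a deliberately self-contained route whereas the paper's proof is essentially a proof-by-citation: the paper obtains $\nabla\Phi\in L^2(e^{-\Phi}\mathrm{d}x)$ from Villani's Hypocoercivity memoir, essential self-adjointness of $G_\Phi$ from the literature on Dirichlet operators, (C3) from the Poincar\'e assumption, and (C1) directly from a lemma of Conrad--Grothaus that already packages the Kato bound under the hypothesis $|\nabla^2\Phi|\leq c(1+|\nabla\Phi|)$. Your Steps~1 and~3 reconstruct the content of exactly those two cited lemmas, and your mechanism (the weighted Bochner identity $\|G_\Phi f\|^2_{H_\Phi}=\int|\mathrm{Hess}(f)|^2 e^{-\Phi}\mathrm{d}x + \int \mathrm{Hess}(\Phi)(\nabla f,\nabla f)e^{-\Phi}\mathrm{d}x$, followed by integration by parts on $\int|\nabla\Phi|^2|\nabla f|^2 e^{-\Phi}\mathrm{d}x$ and absorption via Young) is the standard engine behind those references, so the mathematical approach is really the same; what the paper buys by citing is brevity, and what your account buys is transparency about where the growth condition on $\nabla^2\Phi$ actually enters. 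Two small imprecisions are worth flagging: in Step~2 you condition essential self-adjointness on ``the Kato bounds obtained in Step~3,'' which inverts the logical order and is unnecessary (the cited self-adjointness results stand on their own under $\Phi\in C^2$); and at the end of Step~3, bounding the individual operators $(\partial_i\Phi)\partial_j$ does not follow from writing $\nabla\Phi\cdot\nabla f = \Delta f - G_\Phi f$, which only controls the diagonal sum $\sum_i(\partial_i\Phi)\partial_i f$ --- instead one should simply observe that $\|(\partial_i\Phi)\partial_j f\|^2_{H_\Phi}\leq \int|\nabla\Phi|^2|\nabla f|^2 e^{-\Phi}\mathrm{d}x$, and this is exactly the integral you have already controlled by $\|G_\Phi f\|_{H_\Phi}$ and $\|f\|_{H_\Phi}$ in the absorption step. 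With that correction the argument is complete.
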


\section{Definitions and proofs} \label{Regular sub-Markovian semigroups and associated laws}

This section is devoted to give the proofs to all assertions made in Section \ref{section_Results}. In this context, we need to introduce and recapitulate first some basic definitions and knowledge about probability laws and regular sub-Markovian semigroups in Subsection \ref{subsection_laws} and Subsection \ref{subsection_laws_and_associated_semigroups}. The both last named subsections do not contain new material and are essentially based on results developed in the PhD-thesis of Florian Conrad, see \cite{Con11} and see also \cite{CG10}. We remark that Subsection \ref{subsection_laws_and_associated_semigroups} below is only needed for the proof of Theorem \ref{Thm_Martingale_Problem_via_regular semigroups} and recall that Theorem \ref{Thm_Martingale_Problem_via_regular semigroups} is a result for verifying the martingale property in Assumption (S). So, for understanding the abstract ergodicity framework, Theorem \ref{Thm_Martingale_Problem_via_regular semigroups} is not necessarily required. Consequently, the interested reader may skip Section \ref{subsection_laws_and_associated_semigroups} in first reading and may directly switch to Section \ref{subsection_proofs_to_abstract_setting} in which all proofs for the abstract ergodicity setting can be found.

\subsection{Basics about probability laws}\label{subsection_laws}

Let us start with standard definitions of probability laws, see e.g.~\cite[Ch.~3]{EK86}. The notion of time reversal is taken from \cite[Sec.~2.1.3]{Con11}. 

\begin{Df} \label{Df_probability_law}
In the following, let $(E,r)$ be a  separable metric space equipped with its Borel $\sigma$-algebra $\mathcal{B}(E)$ generated by the open sets. Let $I=[0,\infty)$ or $I=[0,T]$ for\index{$C([0,\infty);E)$} some $T>0$. $C(I;E)$$\index{$C(I;E)$}$ denotes the space of continuous paths $\omega=(\omega_t)_{t \in I}$ taking values in $E$. $C(I;E)$ can itself be equipped with a metric (inducing  uniform convergence on compact intervals) via
\begin{align*}
d(\omega,\nu)=\int_0^\infty e^{-s} \sup_{0 \leq t \leq s} \left[r(\omega_t,\nu_t) \wedge 1\right] \d s \quad \mbox{for all}~\omega,\nu \in C([0,\infty);E)
\end{align*}
in case $I=[0,\infty)$ (see \cite[Eq.~(10.5)]{EK86}) and via
\begin{align*}
d(\omega,\nu)=\sup_{0 \leq t \leq T} \left[r(\omega_t,\nu_t)\right]\quad \mbox{for all }\omega,\nu \in C([0,T];E)
\end{align*}
in case $I=[0,T]$. Here $\wedge$ denotes the minimum\index{$\wedge$}. The associated $\sigma$-field on $C(I;E)$ generated by the open sets is denoted by $\mathcal{F}_C$\index{$\mathcal{F}_C$}. In the following, $(\mathcal{F}_t^0)_{t \in I}$ denotes the filtration\index{evaluation of paths $X_t$} generated by the paths\index{elementary filtration $(\mathcal{F}_t^0)_{t \in I}$}, i.e., $\mathcal{F}_t^0=\sigma(X_s\,|\,0 \leq s \leq t)$\index{$\sigma \{ X_s~|~0 \leq s \geq t\}$} for each $t \in I$. Here
\begin{align*}
X_s \colon C(I;E) \to E,\quad X_s(\omega) \df \omega_s\quad \mbox{for all }s \geq 0.
\end{align*}
It is well-known that $\mathcal{F}_C = \mathcal{F}_\infty^0\df \sigma \{ X_s~|~s \geq 0\}$ in case $I=[0,\infty)$ and $\mathcal{F}_C = \mathcal{F}_T^0$ in case $I=[0,T]$, see e.g.~\cite[Prob.~3.25]{EK86}. A \textit{probability law} $\mathbb{P}$\index{probability law $\mathbb{P}$} is defined to be a probability measure on $\left(C(I;E),\mathcal{F}_C\right)$ and its corresponding expectation is denoted by $\mathbb{E}[\cdot]$ or by $\mathbb{E}_{\mathbb{P}}[\cdot]$\index{$\mathbb{E}[\cdot]$ or $\mathbb{E}_{\mathbb{P}}[\cdot]$}. For any such law $\mathbb{P}$ its \textit{initial probability distribution $\mu$}\index{initial distribution $\mu$ of a law $\mathbb{P}$} is defined to be $\mathbb{P} \circ X_0^{-1} $. $\mathbb{P}$ is said to have the \textit{invariant measure $\mu$}\index{invariant measure of a law} if $\mathbb{P} \circ X_t^{-1}=\mu$ for all $t \in I$. Now let $\mathbb{P}$ be a law on $C([0,\infty);E)$ and let $T>0$. Define
\begin{align*} \index{$\text{restr}_{[0,T]}$}
\text{restr}_{[0,T]} \colon C([0,\infty);E) \to C([0,T];E), \quad \omega= \left( \omega_t \right)_{t \geq 0} \mapsto \left( \omega_t \right)_{t \in [0,T]}.
\end{align*}
It is a continuous, hence measurable mapping. We denote the pushforward measure on $C([0,T];E)$ of $\mathbb{P}$ under $\text{restr}_{[0,T]}$ by $\mathbb{P}_{T}$\index{$\mathbb{P}_T$}, i.e., $\mathbb{P}_T = \mathbb{P} \circ \text{restr}_{[0,T]}^{-1}$. The mapping 
\begin{align*}
\tau_T\colon C([0,T];E) \rightarrow C([0,T];E),\quad \tau_{T}\left((\omega_s)_{s \in [0,T]}\right)\df(\omega_{T-s})_{s \in [0,T]}
\end{align*}
is called the \textit{time-reversal}\index{time-reversal $\tau_T$}. It is also continuous, hence measurable. Moreover, it is easy to see that $\tau_T$ is $\mathcal{F}_T^0 / \mathcal{F}_T^0$-measurable; The probability law $\mathbb{P}_{T} \circ \tau_T^{-1}$\index{$\mathbb{P}_{T} \circ \tau_T^{-1}$} on $C([0,T];E)$ is called the \textit{time-reversal of $\mathbb{P}_{T}$}.
\end{Df}

With these definitions at hand we obtain the following lemma. It contains statements similar as the ones used in the beginning of the proof of \cite[Lem.~2.1.8]{Con11}, or see the proof of \cite[Lem.~5.1]{CG10} equivalently. However, the assumptions in the upcoming lemma are completely different compared to \cite[Lem.~2.1.8]{Con11}. Below $\int_0^t \mathrm{d}s$\index{$\int_0^t \mathrm{d}s$} denotes the Lebesgue integral $\int_{[0,t]} \mathrm{d}s$.

\begin{Lm} \label{Lm_well_definedness_integrability}
Let $E$ be a separable metric space and let $\mu$ be a probability measure on $(E,\mathcal{B}(E))$. Let $\mathbb{P}$ be a probability law on $C([0,\infty);E)$ which admits $\mu$ as invariant measure.
\begin{itemize}
\item[(i)] Let $t \geq 0$ and choose $f \in \mathcal{L}^1(E,\mu)$. One has that $f(X_t)$ is integrable w.r.t.~$\mathbb{P}$ and
\begin{align*}
\mathbb{E}\left[  f(X_t) \right] = \int_E f \, \d \mu \leq  \|f\|_{L^1(E,\mu)}.
\end{align*}
In particular, the $\mathcal{F}^{0}_t$-measurable random variable $f(X_t)$ is $\mathbb{P}$-a.s.~well-defined for each $f \in L^1(E,\mu)$. This means that any two $\mu$-versions of $f$ yield $\mathbb{P}$-a.s.~the same random variable $f(X_t)$. 
\item[(ii)] Let $t \geq 0$ and let $f \in \mathcal{L}^1(E, \mu)$. Then the mapping
\begin{align}  \label{numerate_mapping}
[0,t] \times C([0,\infty);E) \ni (s,\omega) \mapsto f(\omega_s) \in \mathbb{R}
\end{align}
is $\mathcal{B}([0,t]) \otimes \mathcal{F}^{0}_t$-measurable and integrable w.r.t.~$\mathrm{d} s \otimes \mathbb{P}$. Hence by Fubini's theorem $\int_0^t f(X_s)\d s$ exists $\mathbb{P}$-a.s., extends to an $\mathcal{F}^{0}_t$-measurable, $\mathbb{P}$-integrable random variable which satisfies
\begin{align*}
\mathbb{E}\left[ \int_0^t f(X_s) \d s \right]  = \int_0^t \mathbb{E}\left[ f(X_s)\right] \d s  \leq   t\, \|f\|_{L^1(E,\mu)}.
\end{align*}
In particular, $\int_0^t f(X_s) \d s$ is also $\mathbb{P}$-a.s.~well-defined for each $f \in L^1(E,\mu)$. 
\item[(iii)] Let $t \geq 0$ and choose $f \in \mathcal{L}^2(E,\mu)$. Then $f(X_t)$ is square integrable w.r.t.~$\mathbb{P}$, $\mathcal{F}^{0}_t$-measurable and the mapping
\begin{align} \label{label_mapping_progr_measurable}
[0,t] \times C([0,\infty);E) \ni (s,\omega) \mapsto f(\omega_s) \in \mathbb{R}
\end{align}
is $\mathcal{B}([0,t]) \otimes \mathcal{F}^{0}_t$-measurable and square integrable w.r.t.~$\mathrm{d} s \otimes \mathbb{P}$. Furthermore, $\int_0^t f(X_s)\d s$ exists $\mathbb{P}$-a.s., extends to an $\mathcal{F}^{0}_t$-measurable, square integrable random variable w.r.t.~$\mathbb{P}$ and satisfies
\begin{align}\label{eq_property_from_microscopic}
\|f(X_t)\|_{L^2(\mathbb{P})} =  \|f\|_{L^2(E,\mu)}, \quad \left\|\int_0^t f(X_s) \, \mathrm{d}s\right\|_{L^2(\mathbb{P})} \leq t\, \|f\|_{L^2(E,\mu)}.
\end{align}
In particular, $f(X_t)$ and $\int_0^t f(X_s) \d s$ are also $\mathbb{P}$-a.s.~well-defined for all $f \in L^2(E,\mu)$. 
\end{itemize}
All statements are valid in the same form when $\mathbb{P}$ is assumed to be a probability law on $C([0,T];E)$ for an arbitrary $T>0$. In this case $[0,\infty)$ and $t \geq 0$ must correspondingly be replaced by $[0,T]$ and  $t \in [0,T]$ above.
\end{Lm}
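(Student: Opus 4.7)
For part (i), the main tool is the invariance property $\mathbb{P}\circ X_t^{-1}=\mu$. Fix a Borel representative of $f\in\mathcal{L}^1(E,\mu)$; since $X_t\colon C([0,\infty);E)\to E$ is continuous (hence $\mathcal{F}_t^0/\mathcal{B}(E)$-measurable) the composition $f(X_t)$ is an $\mathcal{F}_t^0$-measurable real random variable. Invariance gives the transformation formula $\int |f(X_t)|\,\mathrm d\mathbb{P}=\int |f|\,\mathrm d\mu=\|f\|_{L^1(E,\mu)}$, which yields both integrability and the stated equality $\mathbb{E}[f(X_t)]=\int f\,\mathrm d\mu$. Well-definedness follows from the same identity applied to $|f-g|$: if $f=g$ $\mu$-a.e.\ then $\mathbb{P}(f(X_t)\neq g(X_t))\leq\mathbb{E}[|f-g|(X_t)]=\int|f-g|\,\mathrm d\mu=0$ (after truncation to control integrability if needed).

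For part (ii), the nontrivial point is to upgrade separate measurability to joint measurability of the map in \eqref{numerate_mapping}. Here I would argue that the evaluation map $\mathrm{ev}\colon[0,t]\times C([0,\infty);E)\to E$, $(s,\omega)\mapsto\omega_s$, is continuous (by the definition of the metric on $C([0,\infty);E)$ which induces uniform convergence on compacts) when $C([0,\infty);E)$ is equipped with its Borel $\sigma$-algebra $\mathcal{F}_C$; moreover for each fixed $s\in[0,t]$ the map $X_s$ is $\mathcal{F}_t^0$-measurable, and for each fixed $\omega$ the map $s\mapsto\omega_s$ is continuous. This combination (measurable in $\omega$ for each $s$, continuous in $s$ for each $\omega$) gives $\mathcal{B}([0,t])\otimes\mathcal{F}_t^0$-measurability of $\mathrm{ev}$ via a standard approximation by simple-in-$s$ functions. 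Composing with Borel $f$ yields the claim on \eqref{numerate_mapping}. Integrability with respect to $\mathrm ds\otimes\mathbb{P}$ and the stated bound then follow directly from Fubini together with part (i), and $\mathcal{F}_t^0$-measurability of $\int_0^t f(X_s)\,\mathrm ds$ comes from the Fubini conclusion that the integral exists $\mathbb{P}$-a.s.\ together with measurability of the slice integrals.

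For part (iii), I would apply (i) to $|f|^2\in\mathcal{L}^1(E,\mu)$ to obtain $\|f(X_t)\|^2_{L^2(\mathbb{P})}=\int|f|^2\,\mathrm d\mu=\|f\|^2_{L^2(E,\mu)}$. Square-integrability of the mapping in \eqref{label_mapping_progr_measurable} with respect to $\mathrm ds\otimes\mathbb{P}$ follows from Fubini once more. For the $L^2$-bound on the integral, I would use Jensen's inequality (or Cauchy--Schwarz with respect to the uniform measure on $[0,t]$), namely
\begin{equation*}
\Bigl(\int_0^t f(X_s)\,\mathrm ds\Bigr)^2\leq t\int_0^t f(X_s)^2\,\mathrm ds,
\end{equation*}
and take expectation, giving
\begin{equation*}
\Bigl\|\int_0^t f(X_s)\,\mathrm ds\Bigr\|^2_{L^2(\mathbb{P})}\leq t\int_0^t\mathbb{E}\bigl[f(X_s)^2\bigr]\,\mathrm ds=t^2\,\|f\|^2_{L^2(E,\mu)}.
\end{equation*}
Taking square roots yields the announced bound.

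The main obstacle I anticipate is the joint measurability step in part (ii): one must work with the elementary filtration $(\mathcal{F}_t^0)_{t\geq0}$ (not its completion) and verify that the evaluation map is $\mathcal{B}([0,t])\otimes\mathcal{F}_t^0$-measurable rather than merely $\mathcal{B}([0,t])\otimes\mathcal{F}_C$-measurable. Continuity of paths is precisely what makes this argument work, since it allows one to approximate $\mathrm{ev}$ in $s$ by step functions whose values are $\mathcal{F}_t^0$-measurable, with pointwise convergence in $\omega$. Once this measurability is secured, the remainder of the lemma reduces to bookkeeping with Fubini's theorem and Jensen's inequality. The final remark (the case $I=[0,T]$) requires no change of argument beyond replacing $[0,\infty)$ and $t\geq0$ throughout.
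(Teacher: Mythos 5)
Your proposal is correct and follows essentially the same path as the paper: part (i) by the transformation rule for the image measure $\mathbb{P}\circ X_t^{-1}=\mu$, part (ii) by joint measurability plus Fubini, and part (iii) by applying (i) to $f^2$ and then Jensen. The only genuine difference is in part (ii): the paper simply invokes the fact that the coordinate process $(X_s)_{s\ge 0}$ is $(\mathcal{F}_s^0)_{s\ge 0}$-progressively measurable on $C([0,\infty);E)$ (citing \cite[Prob.~2.1]{EK86}), whereas you reconstruct this fact from scratch via the ``continuous-in-$s$, $\mathcal{F}_t^0$-measurable-in-$\omega$, approximate by left-continuous step functions'' argument. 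That reconstruction is sound — and it is precisely the standard proof of progressive measurability for continuous adapted processes — so what you gain is self-containedness, at the cost of re-proving a textbook lemma. One minor point: your opening observation that $\mathrm{ev}$ is continuous for the topology on $[0,t]\times C([0,\infty);E)$ only yields $\mathcal{B}([0,t])\otimes\mathcal{F}_C$-measurability, which (as you yourself then note) is not sufficient; the step-function approximation is the part of your argument that actually delivers $\mathcal{B}([0,t])\otimes\mathcal{F}_t^0$-measurability, so the continuity remark is redundant and could be dropped to avoid suggesting it carries weight it does not.
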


\begin{proof}
We prove (i). Clearly, $f(X_t) \in \mathcal{L}^1(\mathbb{P})$ and $\mathbb{E}\left[f(X_t)\right] =  \int_E f\, \d \mu$ by the transformation rule of image measures since $\mathbb{P} \circ X_t^{-1} = \mu$. Thus (i) follows. We prove (ii). One knows that $(X_s)_{s \geq 0}$ is $(\mathcal{F}^0_s)_{s \geq 0}$-progressively measurable on $(C([0,\infty);E)$, see e.g.~\cite[Prob.~2.1]{EK86}. Hence also $(f(X_s))_{s \geq 0}$ is $(\mathcal{F}^0_s)_{s \geq 0}$-progressively measurable. Thus the mapping in \eqref{numerate_mapping} satisfies the stated measurability property. And by part (i) we have 
\begin{align*}
\int_0^t \mathbb{E} \big[ |f|(X_s) \big]  \d s = t\,\|f\|_{L^1(E, \mu)} < \infty.
\end{align*}
So, (ii) is shown. Finally, we prove (iii). Define $g \df f^2 \in \mathcal{L}^1(E,\mu)$. Note that even $f \in \mathcal{L}^1(E,\mu)$ since $\mu$ is a probability measure. Part (i) applied to $f$ and $g$ yields that $f(X_s)$ is square integrable w.r.t.~$\mathbb{P}$, $\mathcal{F}^{0}_s$-measurable and
\begin{align} \label{Hilfszeile}
\|f(X_s)\|_{L^2(\mathbb{P})}^2 = \|g(X_s)\|_{L^1(\mathbb{P})} = \|g\|_{L^1(E,\mu)} =  \|f\|_{L^2(E,\mu)}^2\quad \mbox{for all } s \geq 0.
\end{align}
Now part (ii) applied to $f \in \mathcal{L}^1(E,\mu)$ in particular yields that the mapping in \eqref{label_mapping_progr_measurable} is $\mathcal{B}([0,t]) \otimes \mathcal{F}^{0}_t$-measurable and integrable w.r.t.~$\mathrm{d} s \otimes \mathbb{P}$, hence $\int_0^t f(X_s)\d s$ is $\mathcal{F}^{0}_t$-measurable by Fubini's theorem. Part (ii) applied now to $g$ yields that the mapping in \eqref{label_mapping_progr_measurable} is even square integrable w.r.t.~$\mathrm{d} s \otimes \mathbb{P}$. Finally, by  using Jensen's inequality (or H"{o}lder's inequality) we can infer that the inequality
\begin{align*}
\left(\int_0^t f(X_s) \d s\right)^2 \leq t \int_0^t f^2(X_s) \d s =  t \int_0^t g(X_s) \d s 
\end{align*}
holds $\mathbb{P}$-a.s.~on $(C([0,\infty);E),\mathcal{F}_t^0,\mathbb{P})$. By part (ii) note that $\int_0^t g(X_s) \d s \in L^1(\mathbb{P})$ and by additionally using \eqref{Hilfszeile} we get
\begin{align*}
\mathbb{E} \left[ \left(\int_0^t f(X_s) \d s\right)^2  \right] \leq t^2 \|f\|_{L^2(E,\mu)}^2.
\end{align*}
This finishes part (iii) of the proof. The proof in case $\mathbb{P}$ is a law on $C([0,T];E)$ is analogous.
\end{proof}

\begin{Rm} 
Let $E$ be a separable metric space and let $\mathbb{P}$ be a law on $C([0,\infty);E)$ having the invariant probability measure $\mu$. $\mathbb{P}$ induces the law $\mathbb{P}_t$ on $C([0,t];E)$ for $t>0$. Choose $f \in \mathcal{L}^1(E,\mu)$. By Lemma \ref{Lm_well_definedness_integrability} one can define the integrable random variable $\int_0^t f(X_s) \, \mathrm{d}s$ once on $(C([0,\infty);E),\mathcal{F}_t^0,\mathbb{P})$ and once on $(C([0,t];E),\mathcal{F}_t^0,\mathbb{P})$. One easily verifies the natural identity 
\begin{align} \label{Rm_identity_restriction_integral_ergodicity}
\int_0^t f(X_s) \, \mathrm{d}s = \int_0^t f(X_s) \, \mathrm{d}s \circ \text{restr}_{[0,t]}
\end{align}
which is satisfied $\mathbb{P}$-a.s.~on $(C([0,\infty);E),\mathcal{F}_t^0,\mathbb{P})$.
\end{Rm}

\subsection{Basics about regular sub-Markovian semigroups} \label{subsection_laws_and_associated_semigroups}

In this section we closely follow \cite{Con11}, see especially \cite[Sec.~2.1]{Con11} together with \cite[Sec.~2.1.3]{Con11}. We adapt some definitions and statement to our relevant situation but do not need all assumptions in the generality as introduced in the last mentioned reference. Especially, we are only considering laws that automatically correspond to conservative diffusion processes and do not adjoin a cemetry $\Delta$ to the underlying state space $E$. Moreover, we only assume $E$ to be a metric space later on. Here the word \textit{diffusion}\index{diffusion} formally means a process with continuous sample paths and \textit{conservative}\index{conservative} means that the process stays on $E$ for all times, i.e., does not die. We start with the definition of regular sub-Markovian s.c.c.s. 

\begin{Df}
Let $(E,\mathcal{B},\mu)$ be a $\sigma$-finite measure space. Let $(T_t)_{t \geq 0}$ be a sub-Markovian s.c.c.s.~on $L^2(E,\mu)$. $(T_t)_{t \geq 0}$ is called \textit{regular} iff the adjoint semigroup\index{semigroup!regular} $(\hat{T}_t)_{t \geq 0}$ is sub-Markovian on $L^2(E,\mu)$ as well. Hence $(\hat{T}_t)_{t \geq 0}$ is also a sub-Markovian s.c.c.s.~on $L^2(E,\mu)$.
\end{Df}

Here a s.c.c.s.~$(T_t)_{t \geq 0}$ on $L^p(E,\mu)$, $p \in [1,\infty)$, is called sub-Markovian\index{semigroup!sub-Markovian} if $0 \leq T_t f \leq 1$ for all $f \in L^p(E,\mu)$ with $0 \leq f \leq 1$. Here the ordering relation $\leq$ is clearly understood $\mu$-a.e. Then one obtains the following result.

\begin{Pp} \label{Pp_regular_sccs}
Let $(E,\mathcal{B},\mu)$ be a $\sigma$-finite measure space. There is a one-to-one correspondence between sub-Markovian s.c.c.s.~$(T_{t,1})_{t \geq 0}$ on $L^1(E,\mu)$ and regular sub-Markovian s.c.c.s.~$(T_{t,2})_{t \geq 0}$ on $L^2(E,\mu)$. Furthermore, the semigroups are related via
\begin{align} \label{relation_T1_and_T2}
T_{t,1}=T_{t,2}\quad \mbox{on } L^1(E,\mu) \cap L^\infty(E,\mu).
\end{align} 
\end{Pp}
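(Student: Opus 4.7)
The plan is to establish the bijection by constructing each semigroup from the other through a combined interpolation--duality argument, and then checking that the two constructions are mutually inverse. The whole argument is classical for Dirichlet forms, but worth spelling out in the general $\sigma$-finite setting.

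\medskip\noindent\textbf{From $L^1$ to $L^2$.} Starting from a sub-Markovian s.c.c.s.~$(T_{t,1})$ on $L^1(E,\mu)$, I would first extract an $L^\infty$-contraction: applying sub-Markovianity to $\|f\|_\infty^{-1}f_\pm$ for $f \in L^1 \cap L^\infty$ gives $\|T_{t,1}f\|_{L^\infty} \le \|f\|_{L^\infty}$. Combined with $L^1$-contractivity, Riesz--Thorin interpolation extends $T_{t,1}|_{L^1\cap L^\infty}$ to a contraction $T_{t,2}$ on $L^2$. The semigroup law and sub-Markovianity of $T_{t,2}$ pass through via the $L^2$-dense subspace $L^1\cap L^\infty$ (density uses $\sigma$-finiteness). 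Strong continuity on $L^2$ would be verified first on $L^1\cap L^\infty$ via the elementary bound
\begin{align*}
\|T_{t,1}f - f\|_{L^2}^2 \le \|T_{t,1}f - f\|_{L^\infty}\,\|T_{t,1}f - f\|_{L^1} \le 2\|f\|_{L^\infty}\,\|T_{t,1}f - f\|_{L^1} \to 0,
\end{align*}
and then transferred to $L^2$ by density and uniform contractivity.

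\medskip\noindent\textbf{Regularity of the resulting $(T_{t,2})$.} This is the pivotal step. I would identify, on $L^2\cap L^\infty$, the $L^2$-adjoint $\hat{T}_{t,2}$ with the $L^1$--$L^\infty$ dual $T_{t,1}^*$ of $T_{t,1}$ through
\begin{align*}
(T_{t,2}f, g)_{L^2} = \int (T_{t,1}f)\,g\,\mathrm{d}\mu = \int f\,(T_{t,1}^*g)\,\mathrm{d}\mu = (f, T_{t,1}^*g)_{L^2}
\end{align*}
for $f \in L^1\cap L^2$, $g \in L^2\cap L^\infty$. Since $T_{t,1}^*$ is a positive contraction on $L^\infty$ dual to a sub-Markovian operator, it preserves $[0,1]$; density then lifts this sub-Markov property from $L^2\cap L^\infty$ to all of $L^2$, yielding regularity of $(T_{t,2})$.

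\medskip\noindent\textbf{From $L^2$ to $L^1$.} Conversely, given a regular sub-Markovian s.c.c.s.~$(T_{t,2})$ on $L^2$, I would exploit the regularity as follows: for $f \in L^1\cap L^2$ with $f\ge 0$ and $g \in L^2\cap L^\infty$ with $0\le g\le 1$, $0\le\hat{T}_{t,2}g\le 1$ yields
\begin{align*}
\int (T_{t,2}f)\,g\,\mathrm{d}\mu = \int f\,(\hat{T}_{t,2}g)\,\mathrm{d}\mu \le \|f\|_{L^1}.
\end{align*}
Sweeping $g$ over indicators of finite-measure subsets of $\{T_{t,2}f>0\}$ (using $\sigma$-finiteness) upgrades this to $T_{t,2}f \in L^1$ with $\|T_{t,2}f\|_{L^1}\le\|f\|_{L^1}$. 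Splitting a general $f$ into $f_\pm$ and extending by density of $L^1\cap L^2$ in $L^1$ produces the desired $L^1$-contraction $T_{t,1}$; the semigroup property and sub-Markovianity descend from the $L^2$-side. Strong continuity on $L^1$ would be checked on the dense class of bounded functions of finite-measure support, combining $L^2$-strong continuity with a uniform $L^1$-domination obtained by applying the previous inequality to $|T_{t,2}f - f|$.

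\medskip\noindent\textbf{Mutual inverse and main obstacle.} Both constructions coincide on $L^1\cap L^2$ by definition, and since $L^1\cap L^\infty \subseteq L^1\cap L^2$ in the $\sigma$-finite setting (use $\int f^2 \le \|f\|_\infty\|f\|_1$), the identity \eqref{relation_T1_and_T2} follows and the correspondence is bijective. I expect the regularity step to be the main obstacle: identifying the $L^2$-adjoint with the $L^1$--$L^\infty$ dual, and symmetrically recovering $\|\cdot\|_{L^1}$ as a supremum against $L^2\cap L^\infty$ test functions, both hinge on $\sigma$-finiteness and a careful density argument, and without the regularity assumption the $L^2 \Rightarrow L^1$ passage fails outright.
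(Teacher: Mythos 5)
The paper itself gives no in-text proof here --- it only cites \cite[Lem.~1.3.11, Lem.~1.3.14]{Con11} --- so there is no internal argument to compare against. Your route (Riesz--Thorin between the $L^1$-contraction and the $L^\infty$-bound forced by sub-Markovianity to build $T_{t,2}$; identifying $\hat{T}_{t,2}$ with the $L^1$--$L^\infty$ dual $T_{t,1}^*$ on $L^2\cap L^\infty$ for regularity; pairing against $\hat{T}_{t,2}g\le 1$ and exhausting $E$ by finite-measure sets for the converse $L^1$-bound) is the classical Dirichlet-form argument, and those steps, together with the mutual-inverse check, are sound as sketched.

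The one step that is not actually delivered as written is strong continuity of $(T_{t,1})_{t\ge 0}$ on $L^1$ in the $L^2\Rightarrow L^1$ direction. You invoke ``a uniform $L^1$-domination obtained by applying the previous inequality to $|T_{t,2}f-f|$,'' but no fixed $L^1$-majorant for $\{|T_{t,2}f-f|\}_{t>0}$ is available, and $L^2$-convergence plus an $L^1$-bound does not by itself yield $L^1$-convergence. What does work, on the same dense class, is a Fatou/Scheff\'e argument: take $f\ge 0$ bounded with support $A$, $\mu(A)<\infty$; along any $t_n\to 0$ extract a subsequence with $T_{t_n,1}f\to f$ a.e.\ (from $L^2$-convergence); Fatou gives $\int f\,\mathrm{d}\mu\le\liminf_n\int T_{t_n,1}f\,\mathrm{d}\mu$ while the $L^1$-contraction gives $\int T_{t_n,1}f\,\mathrm{d}\mu\le\int f\,\mathrm{d}\mu$, so $\int T_{t_n,1}f\,\mathrm{d}\mu\to\int f\,\mathrm{d}\mu$, and Scheff\'e then yields $\|T_{t_n,1}f-f\|_{L^1}\to 0$; since every sequence contains such a subsequence, the full limit holds. (Equivalently, split $\int|T_{t,1}f-f|\,\mathrm{d}\mu$ over $A$ and $A^c$: the $A$-part vanishes by Cauchy--Schwarz against $1_A\in L^2$, and $\int_{A^c}T_{t,1}f\,\mathrm{d}\mu=\int T_{t,1}f\,\mathrm{d}\mu-\int_A T_{t,1}f\,\mathrm{d}\mu$ is squeezed between $0$ and $\int f\,\mathrm{d}\mu-\int_A T_{t,1}f\,\mathrm{d}\mu\to 0$.) With that fix your proof is complete.
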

For the proof of the proposition see \cite[Lem.~1.3.11(i)]{Con11}, \cite[Lem.~1.3.11(ii)]{Con11} and \cite[Lem.~1.3.14(ii)]{Con11}. Moreover, let $(L_i,D(L_i))$, $i=1,2,$ be the generator of $(T_{t,i})_{t \geq 0}$. If $f \in D(L_2)$ such that $f$ and $L_2f$ are both elements from $L^1(E,\mu)$ it follows that $f \in D(L_1)$ and $L_1f=L_2f$, see \cite[Lem.~1.3.11(iii)]{Con11} or \cite[Lem.~1.3.14(iii)]{Con11}.\label{pageref_generator_L1_and_L2}

Furthermore, we need the definition of associatedness of a probability law with a sub-Markovian s.c.c.s.~as defined in \cite[Def.~2.1.3]{Con11}, or as in \cite{CG10} equivalently. For the rest of this section, $E$ is always a separable metric space equipped with the associated Borel $\sigma$-algebra $\mathcal{B}(E)$ generated by the open sets.

\begin{Df} \label{Df_associatedness_probabilitylaw_semigroup}\index{associatedness!semigroup and law}
Let $E$ be a separable metric space. Let $\mathbb{P}$ be a probability law on $C([0,\infty);E)$ with initial probability distribution $\mu$. Let $(T_t)_{t \geq 0}$ be a sub-Markovian s.c.c.s.~on $L^p(E,\mu)$ for an $p \in [1,\infty)$. Then $\mathbb{P}$ is said to be associated with $(T_t)_{t \geq 0}$\index{semigroup!associatedness with a law $\mathbb{P}$} if for all nonnegative $f_1,\ldots,f_n \in L^\infty(E,\mu)$, $0 \leq t_1 \leq \cdots \leq t_n$, $n \in \mathbb{N}$, it holds
\begin{align*}
\mathbb{E}\left[f_1(X_{t_1}) \cdots f_n(X_{t_n})\right]=\mu\left( T_{t_1} \left( f_1 T_{t_2-t_1} \left(f_2 \cdots T_{t_{n-1}-t_{n-2}} \left( f_{n-1} T_{t_{n}-t_{n-1}} f_n\right)\right)\right)\right).
\end{align*}
Here $\mu(f)\df\int_E f \d \mu$\index{$\mu(f)$} for some $f \in L^1(E,\mu)$. Associatedness on $C([0,T];E)$ for some $T \geq 0$ is defined in the analogous way.
\end{Df}

\begin{Rm}\label{Rm_associatedness}
$ $
\begin{itemize}
\item[(i)] Of course, one should formulate the associatedness condition first only for nonnegative functions from $\mathcal{L}^\infty(E,\mu)$. However, the right hand side respects $\mu$-equivalence classes of functions from $\mathcal{L}^\infty(E,\mu)$. Thus the associatedness condition makes sense also for functions from $L^\infty(E,\mu)$. Moreover, a sufficient criterion for verifying the associatedness condition is given in \cite[Lem.~2.1.4]{Con11}.
\item[(ii)] By \eqref{relation_T1_and_T2} and Proposition \ref{Pp_regular_sccs} note that a probability law $\mathbb{P}$ is associated with some regular sub-Markovian s.c.c.s.~$(T_{t,2})_{t \geq 0}$ on $L^2(E,\mu)$ iff $\mathbb{P}$ is associated with the corresponding sub-Markovian s.c.c.s.~$(T_{t,1})_{t \geq 0}$ from $L^1(E,\mu)$.
\item[(iii)] Let $E$ be as above. Then there is at most one probability law $\mathbb{P}$ on $C([0,\infty);E)$ with initial probability distribution $\mu$ which is associated with $(T_t)_{t \geq 0}$. This follows with the same argumentation as in \cite[Rem.~2.1.5]{Con11} since $\mathcal{F}_C$ is already generated by the cylinder sets\index{cylinder sets} 
\begin{align*}
\{ X_{s_1} \in A_1 \} \cap \cdots \cap \{ X_{s_k} \in A_k\},\quad 0 \leq s_1 \leq \cdots \leq s_k < \infty,\quad k \in \mathbb{N},\quad A_{i} \in \mathcal{B}(E).
\end{align*}
The analogous uniqueness statement is valid in case $\mathbb{P}$ is assume to be a probability law on $C([0,T];E)$ for some $T>0$.
\end{itemize}
\end{Rm}

Next, we need two more well-known lemmas. For completeness, we recapitulate their proofs below. For the following lemma, see e.g.~\cite[Lem.~1.3.21, Lem.~2.1.14]{Con11} and \cite[Ch.~II, Prop.~4.1]{MR92}. Conservativity and $\mu$-invariance is defined on page \pageref{Def_conservativity_mu_invariance}.

\begin{Lm} \label{Lm_mu_invariance_and_sub_Marko_implies_regularity}
Let $(E,\mathcal{B},\mu)$ be a probability space. Let $(T_{t,2})_{t \geq 0}$ be a $\mu$-invariant sub-Markovian s.c.c.s.~on $L^2(E,\mu)$. Then $(T_{t,2})_{t \geq 0}$ is conservative and regular.
\end{Lm}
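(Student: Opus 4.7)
The plan is to establish conservativity first, and then use it (and its adjoint counterpart) to derive regularity.

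For conservativity, I would use that $\mu$ is a probability measure, so the constant function $\mathbf{1}$ belongs to $L^2(E,\mu)$. By sub-Markovianity applied to $\mathbf{1}$, one gets $0 \leq T_{t,2}\mathbf{1} \leq \mathbf{1}$ $\mu$-a.e. On the other hand, $\mu$-invariance gives $\int_E T_{t,2}\mathbf{1}\,\mathrm{d}\mu = \int_E \mathbf{1}\,\mathrm{d}\mu = 1$. Since $\mathbf{1} - T_{t,2}\mathbf{1} \geq 0$ and has zero integral, it vanishes $\mu$-a.e., which is exactly $T_{t,2}\mathbf{1}=\mathbf{1}$.

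For regularity, I need to show that the adjoint s.c.c.s.~$(\hat{T}_{t,2})_{t\geq 0}$ on $L^2(E,\mu)$ is sub-Markovian. The plan has two parts. First, I would verify that $\hat{T}_{t,2}$ preserves nonnegativity. A preliminary observation is that $T_{t,2}$ itself preserves nonnegativity on all of $L^2(E,\mu)$: for $0 \leq f \in L^2 \cap L^\infty$ this follows from sub-Markovianity applied to $f/\|f\|_\infty$, and the general $0\leq f \in L^2$ case then follows by approximating with $f\wedge n \uparrow f$ in $L^2$ and invoking $L^2$-continuity of $T_{t,2}$. Now for $f,g \in L^2(E,\mu)$ with $f,g\geq 0$, one has
\begin{align*}
(\hat{T}_{t,2}f,g)_{L^2(\mu)} = (f, T_{t,2}g)_{L^2(\mu)} \geq 0,
\end{align*}
and letting $g$ range over nonnegative simple functions (or characteristic functions of measurable sets) shows that $\hat{T}_{t,2}f \geq 0$ $\mu$-a.e.

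Second, I would observe that $\mu$-invariance of $T_{t,2}$ rewrites, via duality, as $\hat{T}_{t,2}\mathbf{1}=\mathbf{1}$: indeed, for every $f \in L^2(E,\mu)$ one has $(f,\hat{T}_{t,2}\mathbf{1})_{L^2(\mu)} = (T_{t,2}f,\mathbf{1})_{L^2(\mu)} = (f,\mathbf{1})_{L^2(\mu)}$. Combining this with nonnegativity preservation, for $0\leq f \leq 1$ the function $\mathbf{1}-f$ is nonnegative, hence $\hat{T}_{t,2}(\mathbf{1}-f) \geq 0$, which gives $\hat{T}_{t,2}f \leq \hat{T}_{t,2}\mathbf{1}=\mathbf{1}$, while nonnegativity gives $\hat{T}_{t,2}f \geq 0$. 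Thus $\hat{T}_{t,2}$ is sub-Markovian and $(T_{t,2})_{t\geq 0}$ is regular.

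The main (very mild) obstacle is the rigorous extension of nonnegativity preservation of $T_{t,2}$ from the sub-Markovian bound on $[0,1]$-valued functions to all nonnegative elements of $L^2(E,\mu)$, because only then does the duality argument $(f,T_{t,2}g)_{L^2(\mu)} \geq 0$ cover a rich enough class of $g$ to conclude $\hat{T}_{t,2}f\geq 0$ $\mu$-a.e. This is a routine truncation-plus-continuity argument, but I would make sure to spell it out before invoking duality.
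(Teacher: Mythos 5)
Your proof is correct and your overall strategy (establish conservativity first, then deduce regularity of the dual via duality, sub-Markovianity of $T_{t,2}$, and $\mu$-invariance) matches the paper's. The one difference worth noting is organizational: the paper never needs the truncation argument you flag as the main obstacle. It tests $\hat{T}_{t,2}f$ against $[0,1]$-valued $g \in L^2(E,\mu)$ directly, observing $0 \leq f\,T_{t,2}g \leq T_{t,2}g$ and then $\int_E f\,T_{t,2}g\,\mathrm{d}\mu \leq \mu(T_{t,2}g) = \mu(g)$ by $\mu$-invariance, which yields the two-sided bound $0 \leq \int_E g\,\hat{T}_{t,2}f\,\mathrm{d}\mu \leq \mu(g)$ in one stroke and hence $0 \leq \hat{T}_{t,2}f \leq 1$ by choosing $g$ to be appropriate indicator functions. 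Your route instead proves two stronger intermediate facts, namely that $T_{t,2}$ preserves nonnegativity on all of $L^2(E,\mu)$ (requiring the monotone-truncation step) and that $\hat{T}_{t,2}\mathbf{1} = \mathbf{1}$, and then combines them. That is perfectly valid and perhaps more conceptually transparent, but it does a bit more work than strictly necessary; the paper's version sidesteps the $L^2$-extension of positivity entirely by staying in the class of $[0,1]$-valued test functions throughout.
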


\begin{proof}
By the sub-Markovian property we have $T_{t,2}1 -1 \geq 0$, $t \geq 0$. But $\mu$-invariance implies $\mu(T_{t,2}1 -1)=0$, hence $T_{t,2}1=1$ for each $t \geq 0$; The regularity statement follows by the same calculation performed at the end of the proof of \cite[Ch.~II, Prop.~4.1]{MR92}. Indeed, let $f,g \in L^2(E,\mu)$ with $0\leq f,g \leq 1$. By the sub-Markovian property and $\mu$-invariance we get $0 \leq \int_E f \,T_{t,2}g \d \mu \leq \mu(T_{t,2}g)=\mu(g)$. Thus $0 \leq \int_E g \,\hat{T}_{t,2}f \d \mu \leq \int g \d \mu$. Hence easily $0 \leq \hat{T}_{t,2}f \leq 1$ for each $f \in L^2(E,\mu)$ with $0 \leq f \leq 1$ and all $t \geq 0$.
\end{proof}

The next lemma is obtained from \cite[Lem.~2.1.14]{Con11}.


\begin{Lm} \label{Lm_properties_semigroup_related_law}
Let $E$ be a separable metric space and $\mathbb{P}$ a probability law on $C([0,\infty);E)$ with initial probability distribution $\mu$. Assume that $\mathbb{P}$ is associated with a sub-Markovian s.c.c.s.~$(T_{t,2})_{t \geq 0}$ on $L^2(E,\mu)$. Then the following statements hold.
\begin{enumerate}
\item[(i)]
$(T_{t,2})_{t \geq 0}$ is conservative.
\item[(ii)]
$\mu$ is a an invariant measure for $\mathbb{P}$ iff $\mu$ is invariant for $(T_{t,2})_{t \geq 0}$.
\item[(iii)] Let $\mu$ be invariant for $\mathbb{P}$. Then the time-reversed law $\mathbb{P}_{T} \circ \tau_T^{-1}$ on $C([0,T];E)$, $T \geq 0$, is associated with $(\hat{T}_{t,2})_{t \in [0,T]}$. 
\item[(iv)] In the situation from (iii), $\mu$ is also an invariant measure for $\mathbb{P}_{T} \circ \tau_T^{-1}$.
\end{enumerate}
\end{Lm}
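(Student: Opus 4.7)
The plan is to treat the four parts in sequence since each uses the preceding ones; the workhorse throughout is the associatedness formula from Definition \ref{Df_associatedness_probabilitylaw_semigroup}, which translates probabilistic moments into nested semigroup expressions, together with the adjoint duality $\mu(g\cdot T_{s,2} h) = \mu(h \cdot \hat{T}_{s,2} g)$.

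For (i) I would apply associatedness with $n = 1$, $t_1 = t$ and $f_1 = 1 \in L^\infty(E,\mu)$ to obtain $1 = \mathbb{E}[1(X_t)] = \mu(T_{t,2} 1)$. Combined with the sub-Markovian estimate $0 \leq T_{t,2} 1 \leq 1$ and the fact that $\mu$ is a probability measure, this forces $T_{t,2} 1 = 1$ $\mu$-a.e. For (ii), both directions read off the $n=1$ identity $\mathbb{E}[f(X_t)] = \mu(T_{t,2} f)$ on nonnegative $f \in L^\infty(E,\mu)$: $\mu$-invariance of $\mathbb{P}$ equates the left side with $\mu(f)$, while $\mu$-invariance of $(T_{t,2})$ equates the right side with $\mu(f)$, and a standard monotone-class argument extends the resulting equality of probability measures from nonnegative $L^\infty$-functions to all Borel sets.

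Part (iii) is the technical heart and I expect it to be the main obstacle. First, by Lemma \ref{Lm_mu_invariance_and_sub_Marko_implies_regularity} together with (ii), $(T_{t,2})$ is not just conservative but actually \emph{regular}, so $(\hat{T}_{t,2})$ is itself a sub-Markovian s.c.c.s.\ and, since $\mu$ is $T_{t,2}$-invariant, also $\mu$-invariant and conservative, i.e.\ $\hat{T}_{s,2} 1 = 1$. For $0 \leq t_1 \leq \cdots \leq t_n \leq T$ and nonnegative $f_1, \ldots, f_n \in L^\infty(E, \mu)$ the definition of $\tau_T$ yields
\begin{align*}
\mathbb{E}_{\mathbb{P}_T \circ \tau_T^{-1}}[f_1(X_{t_1}) \cdots f_n(X_{t_n})] = \mathbb{E}_{\mathbb{P}}[f_1(X_{T-t_1}) \cdots f_n(X_{T-t_n})].
\end{align*}
Setting $s_i := T - t_i$ gives $0 \leq s_n \leq \cdots \leq s_1 \leq T$, so associatedness of $\mathbb{P}$ with $(T_{t,2})$ rewrites the right-hand side as
\begin{align*}
\mu\bigl(T_{T - t_n, 2}\bigl(f_n\, T_{t_n - t_{n-1},2}(f_{n-1}\, T_{t_{n-1} - t_{n-2}, 2}(\cdots T_{t_2 - t_1, 2} f_1))\bigr)\bigr).
\end{align*}
The outermost factor $T_{T - t_n, 2}$ then disappears under the integral by $\mu$-invariance. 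Next I would iteratively apply the adjoint duality, working through the nested expression from the outside inward: at each step the current outer multiplicative factor (built up from the $f_k$'s already peeled and the freshly produced $\hat{T}_{t_{j+1} - t_j, 2}$'s) gets fed into a new $\hat{T}_{t_{k+1} - t_k, 2}$ acting from the inside of the remaining $T$-expression. After $n - 1$ such swaps the formula becomes $\mu(f_1 \hat{T}_{t_2 - t_1, 2}(f_2 \hat{T}_{t_3 - t_2, 2}(\cdots \hat{T}_{t_n - t_{n-1}, 2} f_n)))$, and inserting a spurious outer $\hat{T}_{t_1, 2}$ changes nothing by $\mu$-invariance of $(\hat{T}_{t,2})$, producing exactly the $(\hat{T}_{t,2})$-associatedness identity; a clean induction on $n$ manages the bookkeeping.

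Part (iv) is immediate from the definitions: for $t \in [0, T]$ one has $X_t \circ \tau_T = X_{T-t}$ on $C([0, T]; E)$, so for any $A \in \mathcal{B}(E)$,
\begin{align*}
(\mathbb{P}_T \circ \tau_T^{-1}) \circ X_t^{-1}(A) = \mathbb{P}_T \circ X_{T-t}^{-1}(A) = \mathbb{P} \circ X_{T-t}^{-1}(A) = \mu(A),
\end{align*}
using the invariance of $\mu$ for $\mathbb{P}$ and the consistency $\mathbb{P}_T = \mathbb{P} \circ \mathrm{restr}_{[0,T]}^{-1}$. Thus (iv) drops out once (iii) is in hand; the reordering argument in (iii) is where essentially all the effort lies.
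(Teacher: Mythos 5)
Your proof follows the same structure and computations as the paper's for parts (i)--(iii): the $n=1$ associatedness identity for (i) and (ii), the substitution $s_i = T - t_i$, dropping the outer $T_{T-t_n,2}$ by invariance, iterating the duality pairing to convert the nested $T$-expression into the nested $\hat{T}$-expression, and finally inserting a harmless $\hat{T}_{t_1,2}$ by conservativity. For (iv) you diverge slightly, and in fact give a cleaner argument: you use only the relation $X_t \circ \tau_T = X_{T-t}$ together with invariance of $\mu$ for $\mathbb{P}$, so (iv) needs neither (iii) nor the associatedness machinery at all; the paper instead deduces it from the observation $\mu(\hat{T}_{t,2}f)=\mu(f)$ at the end of (iii) combined with the ``obvious'' direction of (ii) applied to the time-reversed law. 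Your remark that ``(iv) drops out once (iii) is in hand'' is slightly misleading, since your own argument does not use (iii), but this is a matter of phrasing, not of logic.
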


\begin{proof} We prove (i). The associatedness condition implies
\begin{align*}
\mu(T_{t,2}1)=\mathbb{E}\left[ 1(X_t) \right] =1 \quad \mbox{for all } t \in [0,\infty).
\end{align*}
We conclude that $\mu(1-T_{t,2}1)=0$ and thus $1-T_{t,2}1=0$ since $0 \leq T_{t,2}1 \leq 1$. We prove (ii). So, let $\mu$ be invariant for $\mathbb{P}$ and choose $A \in \mathcal{B}(E)$, $t \in [0,\infty)$. Then
\begin{align*}
\mu(1_A)=\mu(A) = \mathbb{P}(X_t^{-1}(A))=\mathbb{E}\left[ 1_A(X_t) \right]=\mu(T_{t,2} 1_A).
\end{align*}
Hence invariance of $\mu$ w.r.t.~$(T_{t,2})_{t \geq 0}$ follows. The other direction is obvious. Now let us prove (iii). First note that the associatedness statement makes sense since $(\hat{T}_{t,2})_{t \geq 0}$ is indeed sub-Markovian by Lemma \ref{Lm_mu_invariance_and_sub_Marko_implies_regularity}. So, let $f_1,\ldots,f_n \in \mathcal{L}^\infty(E,\mu)$ be nonnegative, $0 \leq t_1 \leq \cdots \leq t_n \leq T$, $n \in \mathbb{N}$. Then
\begin{align*}
\int_{C([0,T];E)} f_1(X_{t_1}) \cdots f_n(X_{t_n}) &\d \mathbb{P}_{T} \circ \tau_T^{-1} = \int_{C([0,T];E)} f_n(X_{T-t_n}) \cdots f_1(X_{T-t_1}) \d \mathbb{P}_{T}\\
&=\mu( T_{T-t_n,2} ( f_n T_{t_n-t_{n-1},2} (f_{n-1} \cdots T_{t_{3}-t_{2},2} ( f_{2} T_{t_{2}-t_{1},2} f_1))))\\
&=\mu( f_n T_{t_n-t_{n-1},2} (f_{n-1} \cdots T_{t_{3}-t_{2},2} ( f_{2} T_{t_{2}-t_{1},2} f_1)))\\
&=\mu( f_1 \hat{T}_{t_2-t_1,2} (f_2 \cdots \hat{T}_{t_{n-1}-t_{n-2},2} ( f_{n-1} \hat{T}_{t_{n}-t_{n-1},2} f_n)))\\
&=\mu( \hat{T}_{t_1,2} ( f_1 \hat{T}_{t_2-t_1,2} (f_2 \cdots \hat{T}_{t_{n-1}-t_{n-2},2} ( f_{n-1} \hat{T}_{t_{n}-t_{n-1},2} f_n)))).
\end{align*}
Here the third equality follows due to the invariance of $\mu$ w.r.t.~$(T_{t,2})_{t \geq 0}$. The last equality holds since
\begin{align} \label{conservativity_implies_invariance_of_dual_semigroup}
\mu(\hat{T}_{t,2}f) = (1,\hat{T}_{t,2} f)_{L^2(E,\mu)} = \left(T_{t,2} 1,f\right)_{L^2(E,\mu)} = \left(1,f\right)_{L^2(E,\mu)} = \mu(f)
\end{align}
is fulfilled for all $t \geq 0$ and all $f \in L^2(E,\mu)$. This also proves (iv).
\end{proof}

Now finally, we recapitulate a specific case of a result from \cite[Lem.~2.1.8]{Con11} (or see \cite[Lem.~5.1]{CG10} equivalently). As remarked in \cite[p.~62]{Con11} it is itself a combination of \cite[Prop.~1.4]{BBR06} and \cite[Theo.~4.6]{DIPP84}; in \cite{Con11} the statement is formulated on the space of c\`{a}dl\`{a}q paths taking values in a polish space. In the upcoming proposition we only assume that $E$ is a separable metric space and formulate the statement on $C([0,\infty);E)$. We remark that the proof carries over in exactly the same way without any modification.

\begin{Pp} \label{Pp_martingale_problem} (Martingale problem) \index{martingale problem}
Let $E$ be a separable metric space. Let $\mathbb{P}$ be a probability law on $C([0,\infty);E)$ with initial and invariant probability distribution $\mu$. Assume that $\mathbb{P}$ is associated with a sub-Markovian s.c.c.s.~$(T_{t,2})_{t \geq 0}$ on $L^2(E,\mu)$ (which is regular by Lemma \ref{Lm_mu_invariance_and_sub_Marko_implies_regularity} and Lemma \ref{Lm_properties_semigroup_related_law}). Denote the generator of $(T_{t,2})_{t \geq 0}$ on $L^2(E,\mu)$ by $(L_2,D(L_2))$. Then $\mathbb{P}$ solves the martingale problem for $(L_2,D(L_2))$, i.e., the process $(M_t^{[f],L_2})_{t \geq 0}$\index{$M^{[f],L_2}$}, $f \in D(L_2)$, defined by
\begin{align*}
M_t^{[f],L_2}=f(X_t) - f(X_0) - \int_0^t L_2f(X_s) \d s,\quad t \geq 0,
\end{align*}
is $\mathbb{P}$-integrable and $(M_t^{[f],L_2})_{t \geq 0}$ is an $(\mathcal{F}_t^{0})_{t \geq 0}$-martingale under $\mathbb{P}$. For all $f \in D(L_2)$ the martingale $(M_t^{[f],L_2})_{t \geq 0}$ is even square integrable and if furthermore $f^2 \in D(L_1)$, then 
\begin{align*}
N_t^{[f],L_1,L_2}\df\left(M_t^{[f],L_2}\right)^2 - \int_0^t \left(L_1(f^2)(X_s) - \left(2fL_2f\right)(X_s)\right) \d s,\quad t \geq 0,
\end{align*}
is $\mathbb{P}$-integrable \index{$N^{[f],L_1,L_2}$, $N^{[f],L_2}$}and defines also an $(\mathcal{F}_t^{0})_{t \geq 0}$-martingale under $\mathbb{P}$. Here $(L_1,D(L_1))$ denotes the generator of the sub-Markovian s.c.c.s.~$(T_{t,1})_{t \geq 0}$ on $L^1(E,\mu)$ associated to $(T_{t,2})_{t \geq 0}$, see Proposition \ref{Pp_regular_sccs}. Note that the requirement $f^2 \in D(L_1)$ is fulfilled if $f^2 \in D(L_2)$. In this case we also have $L_1 f^2 = L_2 f^2$ and we denote $N_t^{[f],L_1,L_2}$ for short by $N_t^{[f],L_2}$. 
\end{Pp}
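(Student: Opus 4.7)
The plan is to proceed in three stages: (i) well-definedness, integrability and adaptedness, (ii) the first-order martingale identity for $M^{[f],L_2}$, and (iii) the square integrability and martingale identity for the quadratic correction $N^{[f],L_1,L_2}$. Stage (i) is immediate: since $\mu$ is a probability measure one has $L^2(E,\mu)\subset L^1(E,\mu)$, so Lemma \ref{Lm_well_definedness_integrability}\,(iii) applied to $f$ and to $L_2 f$ shows that $f(X_t)$ and $\int_0^t L_2 f(X_s)\,\mathrm{d}s$ are $\mathcal{F}_t^0$-measurable, $\mathbb{P}$-a.s.\ well-defined, and square integrable under $\mathbb{P}$; adaptedness of $M_t^{[f],L_2}$ is then clear.

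For stage (ii), I would establish the martingale identity
\[
\mathbb{E}\!\left[\left(M_t^{[f],L_2}-M_s^{[f],L_2}\right)Z\right]=0
\]
for all $0\le s<t$ and all $Z$ of the form $Z=\prod_{i=1}^k g_i(X_{t_i})$ with $0\le t_1\le\cdots\le t_k\le s$ and nonnegative $g_i\in L^\infty(E,\mu)$; a monotone-class argument then extends this to all bounded $\mathcal{F}_s^0$-measurable $Z$, cf.\ Remark \ref{Rm_associatedness}\,(iii). Definition \ref{Df_associatedness_probabilitylaw_semigroup} is used to rewrite $\mathbb{E}[f(X_t)Z]$ and $\mathbb{E}[f(X_s)Z]$ as iterated semigroup expressions in $\mu$, while Fubini (justified by Lemma \ref{Lm_well_definedness_integrability}\,(ii)) converts $\mathbb{E}[Z\int_s^t L_2 f(X_u)\,\mathrm{d}u]$ into $\int_s^t \mathbb{E}[ZL_2 f(X_u)]\,\mathrm{d}u$. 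Matching the two sides is then reduced to the $L^2$-identity
\[
T_{t-s,2}f-f=\int_0^{t-s} T_{u,2}L_2 f\,\mathrm{d}u\quad\mbox{for }f\in D(L_2),
\]
combined with the semigroup law $T_{s-t_k,2}T_{u-s,2}=T_{u-t_k,2}$. A minor technicality is that $L_2 f$ need only lie in $L^2(E,\mu)$, not in $L^\infty$; this is handled by truncating $L_2 f$ at level $N$, observing that both sides of the associatedness identity are continuous in the last argument with respect to the $L^1(\mu)$-norm (using the contractivity of $(T_{t,2})$ on $L^1$ together with Proposition \ref{Pp_regular_sccs}), and passing to the limit.

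For stage (iii), square integrability of $M_t^{[f],L_2}$ follows from Minkowski's inequality together with the two bounds in Lemma \ref{Lm_well_definedness_integrability}\,(iii). To prove that $N^{[f],L_1,L_2}$ is a martingale, I would apply the argument of stage (ii) to $f^2\in D(L_1)$, invoking Proposition \ref{Pp_regular_sccs} and Remark \ref{Rm_associatedness}\,(ii) to replace $(T_{t,2})$ by $(T_{t,1})$: this yields that $M_t^{[f^2],L_1}:=f^2(X_t)-f^2(X_0)-\int_0^t L_1(f^2)(X_s)\,\mathrm{d}s$ is an $(\mathcal{F}_t^0)$-martingale. One then expands $(M_t^{[f],L_2})^2-(M_s^{[f],L_2})^2$ into $f^2(X_t)-f^2(X_s)$ plus cross terms of the form $f(X_\cdot)\int L_2 f(X_\cdot)\,\mathrm{d}s$; conditioning these cross terms on $\mathcal{F}_s^0$ and using stage (ii) together with the associatedness formula applied to the product $f\cdot L_2 f\in L^1(\mu)$ produces exactly the integrated correction $\int_s^t\bigl(L_1(f^2)-2fL_2 f\bigr)(X_u)\,\mathrm{d}u$ after rearrangement.

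The main obstacle will be the bookkeeping in stage (iii): although the heuristic of identifying $L_1(f^2)-2fL_2 f$ as (twice) the carré du champ is clear, the rigorous manipulation of the cross terms must be performed entirely at the level of the associatedness formula, since one cannot assume $f\in D(L_1)$ nor that the pointwise product $f\cdot L_2 f$ belongs to $D(L_2)$. Stages (i) and (ii) are essentially mechanical once the semigroup-integral identity is invoked; the overall argument is standard in the theory of sub-Markovian semigroups and, as the statement itself notes, is a straightforward adaptation of \cite[Prop.~1.4]{BBR06} and the càdlàg version in \cite{Con11}, the passage from a Polish to a separable metric state space and from càdlàg to continuous paths affecting none of the steps.
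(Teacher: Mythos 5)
The paper itself does not give an argument here: the proof of Proposition~\ref{Pp_martingale_problem} is the one-line citation to \cite[Lem.~2.1.8]{Con11} / \cite[Lem.~5.1]{CG10}, with the surrounding text noting that the cited proof (itself a combination of \cite[Prop.~1.4]{BBR06} and \cite[Theo.~4.6]{DIPP84}) carries over verbatim from càdlàg Polish-space paths to the present setting. So there is no paper-internal proof to match your outline against; what you propose is a reconstruction of that external argument, and you are explicit about this.

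Stages (i) and (ii) are on target and faithfully track how such a proof goes: Lemma~\ref{Lm_well_definedness_integrability} gives measurability and $L^2$-integrability, and the martingale identity for $M^{[f],L_2}$ reduces, via the associatedness formula and Fubini, to the semigroup identity $T_{t-s,2}f-f=\int_0^{t-s}T_{u,2}L_2f\,\mathrm{d}u$, with truncation handling the fact that $f$ and $L_2f$ are only in $L^2$ (you only mention truncating $L_2f$, but the same device must be applied to $f$ itself). Stage (iii) is where your description is genuinely under-specified. Writing $(M_t^{[f]})^2-(M_s^{[f]})^2$ as ``$f^2(X_t)-f^2(X_s)$ plus cross terms of the form $f(X_\cdot)\int L_2f(X_\cdot)\,\mathrm{d}s$'' omits the term $\bigl(\int_0^t L_2f(X_u)\,\mathrm{d}u\bigr)^2-\bigl(\int_0^s L_2f(X_u)\,\mathrm{d}u\bigr)^2$, which is not a cross term and is in fact the crux of the second-order computation. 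Its treatment is not a matter of applying the associatedness formula to $fL_2f$; rather one writes $\bigl(\int_s^t L_2f(X_u)\,\mathrm{d}u\bigr)^2=2\int_s^t L_2f(X_u)\bigl(\int_u^t L_2f(X_v)\,\mathrm{d}v\bigr)\,\mathrm{d}u$, conditions on $\mathcal{F}_u^0$ inside the outer integral using that $L_2f(X_u)$ is $\mathcal{F}_u^0$-measurable, and then invokes stage~(ii) \emph{at the intermediate time $u$} to replace $\mathbb{E}[\int_u^t L_2f(X_v)\,\mathrm{d}v\,|\,\mathcal{F}_u^0]$ by $\mathbb{E}[f(X_t)-f(X_u)\,|\,\mathcal{F}_u^0]$. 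Only after this iterated-conditioning step do the various $f(X_s)\,\mathbb{E}[L_2f(X_u)\,|\,\mathcal{F}_s^0]$ contributions cancel and the $L_1(f^2)-2fL_2f$ integrand emerges. You flag the bookkeeping as the main obstacle, which is honest, but the specific mechanism -- Fubini on the squared integral followed by the tower property with the first-order martingale identity applied at the inner time -- is the one piece of structure your outline should name rather than defer.
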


\begin{proof} See \cite[Lem.~2.1.8]{Con11} or see \cite[Lem.~5.1]{CG10}.
\end{proof}

\begin{Rm}
In the situation where $E$ is a separable metric space we have that $\mathcal{F}_C$ is equal to $\mathcal{F}_\infty^0$. However, in the whole abstract framework (above and below) the assumption $E$ being separable can be dropped. Indeed, if we only assume $E$ to be metric, then every statement from above and below stays valid if we initially introduce a probability law as a probability measure on $((C[0,\infty);E),\mathcal{F}_\infty^0)$. However, this is not the common definition of a probability law used in literature and would probably be confusing. Hence we always assume separability of $E$ for convenience.
\end{Rm}

After this short recapitulation, we can now go on to prove the assertions and statements claimed in the abstract ergodicity method in Section \ref{Results_abstract_ergodicity_method}. 

\subsection{Proofs to Section \ref{Results_abstract_ergodicity_method} (The abstract ergodicity method)}\label{subsection_proofs_to_abstract_setting}

For the rest of this section assume Conditions (A) and (S) from Section \ref{Results_abstract_ergodicity_method} and we use the notations introduced in Section \ref{Results_abstract_ergodicity_method}. In particular, let 
\begin{align*}
E,~H=L^2(E,\mu),~(S,D),~(A,D),~(L,D),~(L_2,D(L_2)),~\left(T_{t,2}\right)_{t \geq 0},~\mathbb{P}
\end{align*}
and the respective dual objects $(\hat{L},D)$, $(\hat{L}_2,D(\hat{L}_2))$, $\left(\hat{T}_{t,2}\right)_{t \geq 0}$, $\hat{\mathbb{P}}$ be as in (A) and (S) (or (A)' and (S)') in the following. We start with the proof of Corollary \ref{Pp_computation_quadratic_variation}.

\begin{proof}[Proof of Corollary \ref{Pp_computation_quadratic_variation}]
Let $t \geq 0$ and $f \in D$. We have\phantomsection\label{proof_Corollary_quadratic_variation}
\begin{align*}
\mathbb{E}\left[\left( M_t^{[f],L}\right)^2\right] = \mathbb{E}\left[ 2 \int_0^t \Gamma_L(f,f)(X_s) \d s \right] =  2 \int_0^t \mathbb{E}\left[ \,\Gamma_L(f,f)(X_s)  \right] \d s
\end{align*}
since $N^{[f],L}$ is a $\mathbb{P}$-integrable $\left(\mathcal{F}_h^0\right)_{h \geq0}$-martingale which starts at zero. Here in the last equality Fubini's theorem is used, see Lemma \ref{Lm_well_definedness_integrability}\,(ii) for details. By using the invariance $\mathbb{P} \circ X_s^{-1} = \mu$ for each $s \geq 0$ and (A3) we conclude
\begin{align*}
\mathbb{E} \left[ \,\Gamma_L(f,f)(X_s) \,\right] = \int_E \Gamma_L(f,f) \, \mathrm{d}\mu = - \,(Lf,f)_H=-\,(Sf,f)_H,\quad s \geq 0.
\end{align*}
Here the last equality holds since $L=S-A$ on $D$ and $(A,D)$ is antisymmetric on $H$ by (A2). The statement in the dual case follows by using exactly the same arguments with $\mathbb{P}$, $\mathbb{E}$ and $L$ replaced by $\hat{\mathbb{P}}$, $\hat{\mathbb{E}}$ and $\hat{L}$ above.
\end{proof}

With the knowledge about regular s.c.c.s.~and associated laws we are now able to prove Theorem \ref{Thm_Martingale_Problem_via_regular semigroups}. We mainly have to apply Proposition \ref{Pp_martingale_problem}. 

\begin{proof}[Proof of Theorem \ref{Thm_Martingale_Problem_via_regular semigroups}]
First of all, we have the identity\phantomsection\label{proof_Theorem_A_strich_und_s_strich}
\begin{align*}
Lf = \lim_{h \downarrow 0} \frac{1}{h} \left(T_{h,2}f - f\right),\quad f \in D,
\end{align*}
since $L_2=L$ on $D$. Hence by invariance of $\mu$ w.r.t.~$(T_{t,2})_{t \geq 0}$ we conclude that $\mu(Lf)=0$ for all $f \in D$. Moreover, invariance of $\mu$ w.r.t.~$(T_{t,2})_{t \geq 0}$ also implies that $\mathbb{P}$ admits $\mu$ as invariant measure by Lemma \ref{Lm_properties_semigroup_related_law}\,(ii). And since $\hat{\mathbb{P}}_T = \mathbb{P}_T \circ \tau_T^{-1}$ for $T \geq 0$, Lemma \ref{Lm_properties_semigroup_related_law}\,(iii) implies that $\hat{\mathbb{P}}$ is associated with $(\hat{T}_{t,2})_{t \geq 0}$ which is also clearly a sub-Markovian s.c.c.s.~on $L^2(E,\mu)$. The probability measure $\mu$ is furthermore invariant w.r.t.~$(\hat{T}_{t,2})_{t \geq 0}$ since $(T_{t,2})_{t \geq 0}$ is conservative, see Identity \eqref{conservativity_implies_invariance_of_dual_semigroup}. Now Lemma\,\ref{Lm_mu_invariance_and_sub_Marko_implies_regularity} or Lemma\,\ref{Lm_properties_semigroup_related_law}\,(i) shows that $(\hat{T}_{t,2})_{t \geq 0}$ is also conservative. Summarizing, $(\hat{T}_t)_{t \geq 0}$ is also a regular conservative $\mu$-invariant sub-Markovian s.c.c.s.~on $H$ and again by Lemma \ref{Lm_properties_semigroup_related_law}\,(ii) we conclude that $\hat{\mathbb{P}}$ admits $\mu$ as invariant measure. The same argumentation as in the beginning of this proof shows that $\mu(\hat{L}f) =0$ for all $f \in D$ since $\hat{L}_2 =L$ on $D$. So, indeed (A3) holds. The final statements that $M^{[f],L}$ and $N^{[f],L}$ are $(\mathcal{F}_t^{0})_{t \geq 0}$-martingales under $\mathbb{P}$ for each $f \in D$ and that $M^{[f],\hat{L}}$ and $N^{[f],\hat{L}}$ are $(\mathcal{F}_t^{0})_{t \geq 0}$-martingales under $\hat{\mathbb{P}}$ for all $f \in D$ is a consequence of Proposition \ref{Pp_martingale_problem} since $D$ is an algebra and $(L,D) \subset (L_2,D(L_2))$ as well as $(\hat{L},D) \subset (\hat{L}_2,D(\hat{L}_2))$.
\end{proof}

Now we continue proving the claimed statements in our abstract framework from Section \ref{Results_abstract_ergodicity_method}. Therefore, we need one more lemma. Recall that $(G,\overline{D_P}^{G})$ is understood as an operator living on $H_P$.

\begin{Lm} \label{Lm_closedness_range_S_and_range_G}
\begin{itemize}
\item[(i)] Assume Condition (E1). Then the range $\mathcal{R}(S) $ of $(S,\overline{D}^S)$ is a closed subspace of $H$. Thus $\mathcal{R}(S)=\mathcal{N}(S)^\bot$ where $\mathcal{N}(S)$ is the kernel of $(S,\overline{D}^S)$.
\item[(ii)] Assume Condition (E3). Then the range $\mathcal{R}(G) $ of $(G,\overline{D_P}^G)$ is a closed subspace of $H_P$. Thus $\mathcal{R}(G)=\mathcal{N}(G)^\bot$ where $\mathcal{N}(G) \subset H_P$ denotes the kernel of $(G,\overline{D_P}^G)$.
\end{itemize}
\end{Lm}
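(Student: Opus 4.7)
The plan is to reduce both parts to the same abstract principle: a densely defined, self-adjoint (hence closed) operator whose quadratic form is coercive on the orthogonal complement of its kernel has closed range, and for a self-adjoint operator that range automatically coincides with the orthogonal complement of the kernel via $\overline{\mathcal{R}(S)}=\mathcal{N}(S^*)^{\bot}$.

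For part (i), I would first invoke (E1) to observe that $(S,\overline{D}^S)$ is self-adjoint (it is the closure of an essentially self-adjoint operator). Since $S$ is self-adjoint, $P$ is its spectral projection onto the kernel and leaves $\overline{D}^S$ invariant; in particular $S$ maps $\overline{D}^S\cap\mathcal{N}(S)^{\bot}$ into $\mathcal{N}(S)^{\bot}$. Next, on $f\in D$ the microscopic coercivity of (E1) together with Cauchy–Schwarz gives
\begin{equation*}
\|Sf\|\,\|(I-P)f\| \;\geq\; |(Sf,f)_H| \;=\; -(Sf,f)_H \;\geq\; \Lambda_m\,\|(I-P)f\|^2,
\end{equation*}
hence $\|Sf\|\geq \Lambda_m \|(I-P)f\|$. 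Taking closures and noting that $SPf=0$, the inequality extends to $\|Sf\|\geq \Lambda_m\|(I-P)f\|$ for all $f\in\overline{D}^S$. Thus $S$ restricted to $\overline{D}^S\cap\mathcal{N}(S)^{\bot}$ is bounded below, so its range is closed in $H$. Since $\mathcal{R}(S)=S(\overline{D}^S\cap\mathcal{N}(S)^{\bot})$, we conclude $\mathcal{R}(S)$ is closed. Combining with the general identity $\overline{\mathcal{R}(S)}=\mathcal{N}(S^*)^{\bot}=\mathcal{N}(S)^{\bot}$ proves $\mathcal{R}(S)=\mathcal{N}(S)^{\bot}$.

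For part (ii) I would repeat the argument verbatim inside $H_P$, with $G$ in place of $S$ and the one-dimensional subspace $\mathbb{R}\cdot 1\subset H_P$ playing the role of the kernel. By (E3) $(G,\overline{D_P}^G)$ is self-adjoint on $H_P$ and $1\in\overline{D_P}^G$ with $G1=0$, so $\mathbb{R}\cdot 1\subset\mathcal{N}(G)$ and $\mathcal{R}(G)\subset(\mathbb{R}\cdot 1)^{\bot}$ (just compute $(Gf,1)_H=(f,G1)_H=0$). Writing $Qf:=(f,1)_H\cdot 1$ for the orthogonal projection in $H_P$ onto $\mathbb{R}\cdot 1$, macroscopic coercivity together with Cauchy–Schwarz on $D_P$ yields $\|Gf\|\geq \Lambda_M\|(I-Q)f\|$, which extends by closure to all of $\overline{D_P}^G$. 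Since $1\in\overline{D_P}^G$ the decomposition $f=Qf+(I-Q)f$ stays inside $\overline{D_P}^G$, and the estimate shows $G$ has closed range when restricted to $\overline{D_P}^G\cap(\mathbb{R}\cdot 1)^{\bot}$; because $G$ annihilates $\mathbb{R}\cdot 1$, this restricted range is all of $\mathcal{R}(G)$. Finally, self-adjointness gives $\overline{\mathcal{R}(G)}=\mathcal{N}(G)^{\bot}$, and closedness upgrades this to equality.

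The only step requiring care is the passage of the coercivity inequalities from the cores $D$ and $D_P$ to the closures $\overline{D}^S$ and $\overline{D_P}^G$; both sides of the inequality need to be continuous with respect to the graph norm of the respective operator. This is automatic because $(Sf,f)_H$ and $\|(I-P)f\|$ (respectively $(Gf,f)_H$ and $\|(I-Q)f\|$) depend continuously on $f$ and $Sf$ (respectively $f$ and $Gf$), so approximation by core elements preserves the bound. The rest is standard functional analysis for self-adjoint operators, and I expect no substantive obstacle beyond making sure that the projections $P$ and $Q$ commute appropriately with the respective operators — which follows for free from self-adjointness.
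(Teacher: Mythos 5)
Your proposal is correct and follows essentially the same argument as the paper: use microscopic (resp.\ macroscopic) coercivity together with Cauchy--Schwarz to obtain a lower bound for $S$ (resp.\ $G$) on the orthogonal complement of the kernel, deduce closedness of the range, and then invoke $\overline{\mathcal{R}(T)}=\mathcal{N}(T)^{\bot}$ for self-adjoint $T$. The only cosmetic differences are that you phrase the key step as a lower bound whereas the paper shows Cauchy sequences converge, and in (ii) you introduce the projection $Q$ explicitly whereas the paper subtracts $(g_n,1)_{H_P}$; these are the same computation.
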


\begin{proof}
We prove (i). Choose $h \in H$ such that $h =\lim_{n \to \infty} Sf_n$ where $f_n \in \overline{D}^S$, $n \in \mathbb{N}$. W.l.o.g.~we may assume that $Pf_n=0$ for each $n \in \mathbb{N}$. Thus the microscopic inequality in (E1), which carries over to each element from $\overline{D}^S$, yields
\begin{align*}
\|Sg\| \|g\| \geq - \left(Sg, g\right)_H \geq \Lambda_m \, \|g\|^2\quad \mbox{for all $g \in \overline{D}^S$ with $Pg=0$}.
\end{align*}
Thus we see that $(f_n)_{n \in \mathbb{N}}$ is a Cauchy sequence in $H$ with limit denoted by $f \in H$. By closedness of $(S,\overline{D}^S)$ we obtain $f \in \overline{D}^S$ and $h=Sf \in \mathcal{R}(S)$. Thus $\mathcal{R}(S)$ is a closed subset of $H$. The second part of (i) now follows by the well-known identity $\overline{\mathcal{R}(T)} =\mathcal{N}(T)^\bot$ which is satisfied for each selfadjoint operator $(T,D(T))$ on a Hilbert space, see e.g.~\cite[Lem.~8.19]{Gol85}. This finishes the proof of part (i). The proof of (ii) is similar. Indeed, let $h \in \overline{\mathcal{R}(G)}$. Here the closure is understood to be in $H_P$. Thus there exists $g_n \in D_P$, $n \in \mathbb{N}$, such that $Gg_n \to h$ in $H$ as $n \to \infty$. Since $1 \in \overline{D_P}^{G}$ and $G1=0$ we can infer that $G f_n=G g_n$ where 
\begin{align*}
f_n\df g_n - \left(g_n,1\right)_{H_P} \in \overline{D_P}^G \quad \mbox{for all } n \in \mathbb{N}.
\end{align*}
By the macroscopic inequality in (E3), which carries over to each element from $\overline{D}_P^{G}$, we can infer that 
\begin{align*}
\|f_n-f_m\|_{H} \leq \frac{1}{\Lambda_M} \| G \, (f_n-f_m)\|_{H} \quad \mbox{for all }n,m \in \mathbb{N}.
\end{align*}
Thus also $(f_n)_{n \in \mathbb{N}}$ is a Cauchy sequence in $H_P$ with limit denoted by $f \in H_P$. By closedness of $(G,\overline{D_P}^G)$ we obtain 
\begin{align*}
f \in \overline{D_P}^G \quad \mbox{and} \quad h=Gf \in \mathcal{R}(G).
\end{align*}
The second part of (ii) now follows by using additionally selfadjointness of $(G,\overline{D_P}^G)$. 
\end{proof}

We are now arriving at the proof of Proposition \ref{Pp_property_from_microscopic}. We remark that Proposition \ref{Pp_property_from_microscopic} is the generalization of \cite[Prop.~5.1]{GK08} and \cite[Lem.~6.6.10]{Con11} to our abstract setting. Thus also the proof below is the generalization of the proofs corresponding to \cite[Prop.~5.1]{GK08} and \cite[Lem.~6.6.10]{Con11}.

\begin{proof}[Proof of Proposition \ref{Pp_property_from_microscopic}]
By \phantomsection\label{proof_main_ingredient_proposition} Lemma \ref{Lm_closedness_range_S_and_range_G}\,(i) we have $\mathcal{N}(S)^\bot=\mathcal{R}(S)$ where $\mathcal{R}(S)$ is the range of $(S,\overline{D}^S)$. So, there exists $g \in \overline{D}^S$ such that $f=Sg$. Moreover, we may assume $g \in \overline{D}^S \cap \mathcal{N}(S)^\bot$ by using that $H=\mathcal{N}(S) \oplus \mathcal{N}(S)^\bot$. In particular, $Pg=0$. Further note that the microscopic inequality in (E1) is satisfied for each element from $\overline{D}^S$. So, by applying the microscopic inequality from (E1) to the previously chosen $g$ we obtain
\begin{align*}
\|f\| \|g\| \geq -\left(f,g\right)_H \geq \Lambda_m \,\|g\|^2.
\end{align*}
Hence $\|g \| \leq \tfrac{1}{\Lambda_m} \|f\|$. Altogether, it suffices to show that for each $g \in \overline{D}^S$ we have
\begin{align} \label{eq_2_property_from_microscopic}
\mathbb{E} \left[ \left( \frac{1}{t} \int_0^t Sg(X_s) \d s \right)^2 \right] \leq \frac{2}{t} \,\|g\| \|Sg\|.
\end{align}
We show this in the following. Therefore, let first $g \in D$. Note $2\,S = (L + \hat{L})$ on $D$. One easily verifies that
\begin{align} \label{equations_P_a_s}
2 \int_0^t Sg(X_s) \d s = \int_0^t Lg(X_s) \d s + \int_0^t \hat{L}g(X_s) \d s =- M_t^{[g],L} - {M}_t^{[g],\hat{L}} \circ \tau_t.
\end{align}
Above $\tau_t$ is the time-reversal on $C([0,t];E)$ as introduced in Definition \ref{Df_probability_law} and the occurring random variables in \eqref{equations_P_a_s} are considered on $(C([0,t];E),\mathcal{F}_t^0,\mathbb{P}_t)$. Here and in the following $\mathbb{P}_t$ and $\hat{\mathbb{P}}_t$ denote the laws induced by $\mathbb{P}$ and $\hat{\mathbb{P}}$ on $C([0,t];E)$, see Definition \ref{Df_probability_law}. Expectation is denoted by $\mathbb{E}_{\,t}$ and $\hat{\mathbb{E}}_{\,t}$. We remark that the equal signs in \eqref{equations_P_a_s} are understood $\mathbb{P}_t$-a.s.~on $(C([0,t];E),\mathcal{F}_t^0,\mathbb{P}_t)$.  Hence the Minkowski inequality implies
\begin{align*}
2\,\mathbb{E}_{\,t}\left[ \left(\int_0^t Sg(X_s) \d s\right)^2\right]^{\tfrac{1}{2}} &\leq \mathbb{E}_{\,t}\left[ \left(M_t^{[g],L}\right)^2 \right]^{\tfrac{1}{2}} + \mathbb{E}_{\,t}\left[ \left(M_t^{[g],\hat{L}} \circ \tau_t\right)^2 \right]^{\tfrac{1}{2}} \\
&=\mathbb{E}_{\,t}\left[ \left(M_t^{[g],L}\right)^2 \right]^{\tfrac{1}{2}} + \hat{\mathbb{E}}_{\,t}\left[ \left(M_t^{[g],\hat{L}}\right)^2 \right]^{\tfrac{1}{2}}.
\end{align*}
We used that $\hat{\mathbb{P}}_t= \mathbb{P}_t \circ \tau_t^{-1}$ by Condition (S). Now note that 
\begin{align*}
\mathbb{E}_{\,t}\left[ \left(M_t^{[g],L}\right)^2 \right]=\mathbb{E}\left[ \left(M_t^{[g],L}\right)^2 \right],\quad \hat{\mathbb{E}}_{\,t}\left[ \left(M_t^{[g],\hat{L}}\right)^2 \right]=\hat{\mathbb{E}}\left[ \left(M_t^{[g],\hat{L}}\right)^2 \right]
\end{align*}
which follows using Identity \eqref{Rm_identity_restriction_integral_ergodicity}. So, with the help of Corollary \ref{Pp_computation_quadratic_variation} we can further estimate
\begin{align*}
2\,\mathbb{E}_{\,t}\left[ \left(\int_0^t Sg(X_s) \d s\right)^2\right]^{\tfrac{1}{2}} &\leq  2 \,\sqrt{- 2\,t\, \left(Sg,g\right)_H}.
\end{align*}
Thus we get
\begin{align*}
\mathbb{E}\left[ \left(\int_0^t Sg(X_s) \d s\right)^2\right]=\mathbb{E}_{\,t}\left[ \left(\int_0^t Sg(X_s) \d s\right)^2\right] \leq -2 \, t\, \left(Sg,g\right)_H.
\end{align*}
So, \eqref{eq_2_property_from_microscopic} is satisfied for each $g \in D$. Finally, by using that $D$ is  a core for $(S,\overline{D}^S)$ and Estimate \eqref{eq_property_from_microscopic}, by approximation \eqref{eq_2_property_from_microscopic} is indeed fulfilled for all $g \in \overline{D}^S$. 
\end{proof}

Before proving Lemma \ref{Lm_after_H4} recall the definition of $G$ and $\left[LAP\right]$ from Section \ref{section_Results}. We shall mention explicitly that $\left[LAP\right]f$ for $f \in \overline{D_P}^G$ does not mean that $Pf \in \overline{D}^A$, $APf \in \overline{D}^L$ and $LAPf=\left[LAP\right]f$. However, there exists a sequence $f_n \in D$, $n \in \mathbb{N}$, such that $f_n \to f$ and $Gf_n \to Gf$ in $H$ as $n \to \infty$. Using \eqref{Eq_cauchy_APf} we see that $(APf_n)_{n \in \mathbb{N}}$ is also a Cauchy sequence in $H$ with limit denoted by $[AP]f \in H$. Since $LAPf_n \to [LAP]f$ in $H$ as $n \to \infty$, by closedness of $(L,\overline{D}^L)$ we obtain
\begin{align*}
[AP]f \in \overline{D}^L \quad \mbox{and} \quad L[AP]f=[LAP]f.
\end{align*} 
Nevertheless, this relation is nowhere required in the following.

\begin{proof}[Proof of Lemma \ref{Lm_after_H4}]
We prove (i).\phantomsection\label{proof_Lemma_before_maintheorem} We have the relation
\begin{align*}  
-G = PLAP \quad \mbox{on } D_P
\end{align*}
since $L=S-A$ on $AP(D)=AP(D_P)$ by \eqref{Technical_Condition_Ergodicity} and $PS=0$ on $\overline{D}^S$. The latter identity holds since $\mathcal{N}(S)^\bot =\mathcal{R}(S)$ by Lemma \ref{Lm_closedness_range_S_and_range_G}\,(i) and hence $P_{|\mathcal{R}(S)}=0$. Hence the first formula from \eqref{relation2_G_LAP} follows by definition of $\left[LAP\right]$ on $\overline{D_P}^G$. Note that Condition (E2) was essential used here for the construction of $\left[LAP\right]$. Now the second formula follows by using that $PG=G$ on $\overline{D_P}^G$ and the symmetry of $P$. Now we prove (ii). Therefore, let $f \in H_P$. It suffices to show that
\begin{align*}
P_G f = \left(f,1\right)_H
\end{align*}
where $P_G \colon H_P \to \mathcal{N}(G)$ denotes the orthogonal projection onto the kernel $\mathcal{N}(G)$ of $(G,\overline{D_P}^G)$. To prove this, note that if $f$ can be written as $f=g_1+g_2$ for some $g_1 \in \mathcal{N}(G)$ and some $g_2 \in \mathcal{N}(G)^\bot$, then $P_Gf=g_1$. Now we have
\begin{align*}
f=f-\left(f,1\right)_{H} + \left(f,1\right)_{H}.
\end{align*}
But $\left(f,1\right)_H \in \mathcal{N}(G)$ since constant functions are elements from $\mathcal{N}(G)$. So, it is left to show that
\begin{align*}
f-\left(f,1\right)_H \in \mathcal{N}(G)^\bot.
\end{align*}
Therefore, choose an arbitrary $g \in \mathcal{N}(G)$. The macroscopic coercivity inequality from (E3) (which is satisfied for all elements from $\overline{D}^G$) implies  $g=\left(g,1\right)_H$. As desired, we obtain
\begin{align*}
\left( f-\left(f,1\right)_H,g\right)_H = \left( f,g \right)_H - \left(f,1\right)_H \left(g,1\right)_H = 0.
\end{align*}
The proof is finished.
\end{proof}

Now we can prove our desired ergodicity theorem. The idea for the proof of Theorem \ref{main_ergodicity_theorem_with_rate_of_convergence} below is a generalization of the strategy developed originally in the concrete fiber lay-down setting in \cite{GK08}, see the proof of Theorem 5.3 therein. So, the following proof is also the generalization of the proof of \cite[Theo.~4.5]{CG10} (or from \cite[Theo.~6.6.5]{Con11} equivalently) which also relies on the strategy used for proving \cite[Theo.~5.3]{GK08}.

\begin{proof}[Proof of Theorem \ref{main_ergodicity_theorem_with_rate_of_convergence}]
By \phantomsection\label{proof_main_theorem_nr1}replacing $f$ through $f-\left(f,1\right)_H$, it suffices to prove the theorem for all $f \in H$ satisfying $\left(f,1\right)_H=0$. So, w.l.o.g.~we assume $\left(f,1\right)_H=0$ in the following. We further use the decomposition
\begin{align*}
f=f-Pf +Pf\quad \mbox{where}\quad f-Pf \in \mathcal{N}(S)^\bot,\quad Pf \in \mathcal{N}(S).
\end{align*}

\textit{Step 1:} Since $f-Pf$ is an element from $\mathcal{N}(S)^\bot$, we can apply Proposition \ref{Pp_property_from_microscopic} to $f-Pf$ and obtain
\begin{align} \label{Estimate_1_main_theorem}
\left\| \frac{1}{t} \int_0^t (f-Pf)(X_s) \d s  \right\|_{L^2(\mathbb{P})} \leq \frac{\sqrt{2}}{\sqrt{t \,\Lambda_m}} \left\| f-Pf \right\|_H \leq \frac{\sqrt{2}}{\sqrt{t \,\Lambda_m}} \left\| f \right\|_H
\end{align}

\textit{Step 2:} Let us consider $Pf$. Lemma \ref{Lm_after_H4}\,(ii) yields that $-Pf \in \mathcal{N}(G)^\bot$ since $P1=1$ which implies $\left(Pf,1\right)_H=0$. By Lemma \ref{Lm_closedness_range_S_and_range_G}\,(ii) we have $\mathcal{N}(G)^\bot=\mathcal{R}(G)$.  So, there exists $g \in \overline{D_P}^G$ such that
\begin{align*}
-Pf=Gg.
\end{align*}
W.l.o.g.~we may assume that $\left(g,1\right)_H=0$ since $G(g-\left(g,1\right)_H)=Gg$ and $\left(g,1\right)_H \in \overline{D_P}^G$ by the last assumption from (E3). By \eqref{relation2_G_LAP} we obtain
\begin{align*}
Pf=-Gg=P\left[LAP\right]g.
\end{align*}
Hence
\begin{align*}
P\left(Pf - \left[LAP\right]g\right)=0.
\end{align*}
So, the element $Pf - \left[LAP\right]g$ can also be estimated by Proposition \ref{Pp_property_from_microscopic} and we obtain
\begin{align*} 
\left\| \frac{1}{t} \int_0^t (Pf - \left[LAP\right]g)(X_s) \d s  \right\|_{L^2(\mathbb{P})} \leq \frac{\sqrt{2}}{\sqrt{t \,\Lambda_m}} \left\| Pf - \left[LAP\right]g \right\|_H.
\end{align*}
Next, we further estimate the right hand side of the previous inequality. Therefore, by  Lemma \ref{Lm_after_H4}\,(i) observe that
\begin{align*}
\left\| Pf - \left[LAP\right]g \right\|_H^{2} &= \left\| Pf \right\|_H^{2} + \left\| \left[LAP\right]g \right\|_H^{2} - 2 \left(Pf,\left[LAP\right]g \right)_H \\
&= \left\| \left[LAP\right]g \right\|_H^{2} -  \left\| Gg \right\|_H^2 \leq \left\| \left[LAP\right]g \right\|_H^{2}.
\end{align*}
Furthermore, by the Kato-bound from (E2) we can infer that
\begin{align*}
\left\| \left[LAP\right]g \right\|_H \leq c_1\, \|Gg\|_H + c_2\, \|g\|_H \leq \left(c_1 + \frac{c_2}{\Lambda_M}\right)\|f\|_H.
\end{align*}
In the last inequality, we have used that
\begin{align*}
\|Gg\|_H=\|Pf\|_H \leq \|f\|_H
\end{align*}
and the macroscopic coercivity inequality from (E3) which implies
\begin{align*}
-\left( Gg,g\right)_H \geq \Lambda_M \|g - \left(g,1\right)\|_H^2 = \Lambda_M \,\|g\|_H^{2}.
\end{align*}
Altogether, we obtain 
\begin{align} \label{Estimate_2_main_theorem}
\left\| \frac{1}{t} \int_0^t (Pf - \left[LAP\right]g)(X_s) \d s  \right\|_{L^2(\mathbb{P})} \leq \frac{\sqrt{2}}{\sqrt{t \,\Lambda_m}}\,\left(c_1 + \frac{c_2}{\Lambda_M}\right)\|f\|_H.
\end{align}

\textit{Step 3:} It is left to consider $\left[LAP\right]g$ where $g \in \overline{D_P}^G$ is chosen as in Step 2. But first, we need some preceding estimates. Therefore, choose $h \in D$ arbitrary. By using the invariance of $\mu$ w.r.t.~$\mathbb{P}$ (more precisely, see Lemma \ref{Lm_well_definedness_integrability}\,(iii)) and Corollary \ref{Pp_computation_quadratic_variation} we obtain
\begin{align*}
\left\| \frac{1}{t} \int_0^t Lh (X_s) \d s \right\|_{L^2(\mathbb{P})} &\leq \frac{1}{t}\left\| h(X_t) - h(X_0) \right\|_{L^2(\mathbb{P})} + \frac{1}{t} \left\| M_t^{[h],L} \right\|_{L^2(\mathbb{P})}\\
&\leq \frac{2}{t} \,\|h\|_H + \frac{\sqrt{2}}{\sqrt{t}} \,\sqrt{ - \left(Lh,h\right)_H}.
\end{align*}
An approximation yields that the last inequality even carries over to each $h \in \overline{D}^L$ by using \eqref{eq_property_from_microscopic}. And since $AP h \in \overline{D}^L$ for all $h \in D_P$, we get
\begin{align} \label{Equation_simplified:later_on_in _Corollary_with_H5}
\left\| \frac{1}{t} \int_0^t LAP h(X_s) \d s \right\|_{L^2(\mathbb{P})} &\leq \frac{2}{t} \,\|APh\|_H + \frac{\sqrt{2}}{\sqrt{t}} \,\sqrt{ - \left(LAPh,APh\right)_H} \\
&\leq  \frac{2}{t} \,\|APh\|_H + \frac{\sqrt{2}}{\sqrt{t}} \,\left( \|LAPh\|_H \| APh\|_H\right)^{\frac{1}{2}} \nonumber
\end{align}
Using the formula $\|APh\|_H^2 = -\left(Gh,h\right)_H$ for $h \in D_P$ and again an approximation via \eqref{eq_property_from_microscopic}, we obtain the desired estimation for $\left[LAP\right]g$ as
\begin{align*}
&\left\| \frac{1}{t} \int_0^t \left[LAP\right] g(X_s) \d s \right\|_{L^2(\mathbb{P})} \\
&\leq  \frac{2}{t} \, \sqrt{-\left(Gg,g\right)_H} + \frac{\sqrt{2}}{\sqrt{t}} \,\left( \|\left[LAP\right]g\|_H \, \sqrt{-\left(Gg,g\right)_H}\right)^{\frac{1}{2}}
\end{align*}
From the computation in  Step 2 we have
\begin{align*}
\|\left[LAP\right]g\|_H \leq \left(c_1 + \frac{c_2}{\Lambda_M}\right)\|f\|_H\quad \mbox{and}\quad \sqrt{-\left(Gg,g\right)_H} \leq \frac{1}{\sqrt{\Lambda_M}} \, \|f\|_H
\end{align*}
Thus we finally obtain
\begin{align} \label{Estimate_3_main_theorem}
&\left\| \frac{1}{t} \int_0^t \left[LAP\right] g(X_s) \d s \right\|_{L^2(\mathbb{P})} \leq \left(\frac{2}{t \, \sqrt{\Lambda_M}} + \frac{\sqrt{2\,c_1 + 2\,c_2\,\Lambda_M^{-1}}}{\sqrt{t}\, \sqrt[4]{\Lambda_M}} \right)\|f\|_H
\end{align}

\textit{Step 4:}  By using the decomposition
\begin{align*}
f=f-Pf +Pf - \left[LAP\right]g +\left[LAP\right]g
\end{align*}
with $g$ as in Step 2 and Estimates \eqref{Estimate_1_main_theorem}, \eqref{Estimate_2_main_theorem} and \eqref{Estimate_3_main_theorem}, the claim follows. Therefore, again use that $\left\|h\right\|_{L^2(\mu)}=\left\|h(X_0)\right\|_{L^2(\mathbb{P})}$ for each $h \in L^2(\mu)$.
\end{proof}

Theorem \ref{Cor_main_ergodicity_theorem_with_rate_of_convergence} now directly follows by modifying Step 3 of the previous proof through using the additionally introduced algebraic relation from (E4), see next. 

\begin{proof}[Proof of Theorem \ref{Cor_main_ergodicity_theorem_with_rate_of_convergence}]
By using (E4) in Step 3 of the proof of Theorem \ref{main_ergodicity_theorem_with_rate_of_convergence} and the relation $(LAPh,APh)_H=(SAPhAPh)_H$ for each $h \in D_P$ (since $(A,\overline{D}^A)$ is antisymmetric), Estimate \eqref{Equation_simplified:later_on_in _Corollary_with_H5} can be simplified as\phantomsection\label{proof_main_theorem_corollary}
\begin{align*}
\left\| \frac{1}{t} \int_0^t LAP h(X_s) \d s \right\|_{L^2(\mathbb{P})} &\leq \frac{2}{t} \,\|APh\|_H + \frac{\sqrt{2}}{\sqrt{t}} \,\sqrt{ c_3}\, \sqrt{- \left(Gh,h\right)_H},\quad h \in D_P.
\end{align*}
Now follow exactly the argumentation from the end of the proof to Step 3 of Theorem \ref{main_ergodicity_theorem_with_rate_of_convergence} to verify the claim.
\end{proof}




Before going on, we finally formulate Assumption (E3) in a different, but equivalent way. This equivalent formulation connects Assumption (E3) with the corresponding macroscopic coercivity assumption (H3) introduced in the hypocoercivity method in \cite[Sec.~1.3]{DMS13} or \cite{GS12B}. In this hypocoercivity setting, namely, the operator $PA^2P$ is considered as an operator living on $H$. For avoiding a bad notation, only for the upcoming lemma, the notation is changed: We denote the previously introduced operator $(G,\overline{D_P}^G)$ on $H_P$ more precisely as $(G_P,\overline{D_P}^{G_P})$ and further introduce $(G,D)$ as
\begin{align*}
G=PA^2P \quad \mbox{on }D.
\end{align*}
The latter is a densely defined, dissipative operator on $H$. Its closure on $H$ we denote by $(G,\overline{D}^G)$ in the upcoming lemma. 

\begin{Lm} \label{Lm_equivalent_condition_for_H4} Assume the technical condition from \eqref {Technical_Condition_Ergodicity}.
\begin{itemize}
\item[(i)] $(G,D)$ is essentially selfadjoint on $H$ if and only if $(G_P,D_P)$ is essentially selfadjoint on $H_P$ and $D_P \subset \overline{D}^G$.
\item[(ii)] Assume that $1 \in \overline{D}^S \cap \overline{D}^G$ and $S1=G1=0$. Then also $1 \in \overline{D_P}^{G_P}$ and $G_P 1=0$. Moreover, the macroscopic inequality in (E3) equivalently means that 
\begin{align} \label{spectral_gap_G}
-\left(Gf,f\right)_{H} \geq \Lambda_M \,\|Pf-\left(f,1\right)_H\|^2_{H} \quad \mbox{for all }f \in D.
\end{align}
\end{itemize}
\end{Lm}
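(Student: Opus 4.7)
The key structural observation is that $G = PA^2P$ factors through $P$ on $D$: for $f \in D$, one has $Pf \in D_P$ and $Gf = G_P(Pf) \in H_P$, so $G$ on $D$ takes values in $H_P$ and depends on $f$ only through $Pf$. From this, part (ii) is largely routine. The hypothesis $S1 = 0$ gives $1 \in \mathcal{N}(S)$, hence $P1 = 1$ and in particular $1 \in H_P$. Taking $f_n \in D$ with $f_n \to 1$ and $Gf_n \to G1 = 0$, one obtains $Pf_n \in D_P$, $Pf_n \to 1$ in $H_P$, and $G_P(Pf_n) = Gf_n \to 0$; hence $1 \in \overline{D_P}^{G_P}$ with $G_P 1 = 0$. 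For the equivalence of the two macroscopic inequalities, combine $(Gf,f)_H = (G_P(Pf), Pf)_{H_P}$ with $(f,1)_H = (Pf, 1)_{H_P}$ (using $1 \in H_P$): (E3) applied to $Pf \in D_P$ is exactly the displayed inequality applied to $f \in D$, and conversely each $f' \in D_P$ is of the form $Pf$ for some $f \in D$.

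For part (i), my criterion is that a symmetric dissipative operator is essentially selfadjoint iff $(I-G)(D)$ is dense; note $G$ is dissipative since $(Gf,f)_H = -\|APf\|^2$. A second consequence of the factorization is that $H_P^\perp \subset D(G^*)$ with $G^* = 0$ on $H_P^\perp$, because $(Gg,h)_H = 0$ for every $g \in D$ and every $h \in H_P^\perp$. Assuming $(G,D)$ essentially selfadjoint, so that $(G, \overline{D}^G) = G^*$, this yields $H_P^\perp \subset \overline{D}^G$ and hence for every $g \in D$, $Pg = g - (I-P)g \in \overline{D}^G$; i.e. $D_P \subset \overline{D}^G$. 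Essential selfadjointness of $(G_P, D_P)$ on $H_P$ then follows by applying the bounded orthogonal projection $P$ to the dense set $(I-G)(D) \subset H$: since $Gf \in H_P$, $P((I-G)f) = Pf - Gf = (I-G_P)(Pf)$, so $P((I-G)D) = (I-G_P)(D_P)$, and continuity and surjectivity of $P \colon H \to H_P$ give density of $(I-G_P)(D_P)$ in $H_P$.

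The backward direction is the main work. Assume $(G_P, D_P)$ is essentially selfadjoint on $H_P$ and $D_P \subset \overline{D}^G$; the goal is density of $(I-G)(D)$ in $H = H_P \oplus H_P^\perp$. I would first bootstrap $D_P \subset \overline{D}^G$ to $H_P^\perp \subset \overline{D}^G$ with $\overline{G} = 0$ there: for $v \in H_P^\perp$, choose $g_n \in D$ with $g_n \to v$ (so $Pg_n \to 0$); since $Pg_n \in D_P \subset \overline{D}^G$, the inclusion supplies approximating sequences in $D$, from which a diagonal extraction produces $h_n \in D$ such that $\tilde g_n := g_n - h_n$ satisfies $\tilde g_n \to v$ and $G\tilde g_n = G_P(Pg_n) - G_P(Ph_n) \to 0$. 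Then for an arbitrary $h = u + v$ with $u \in H_P$ and $v \in H_P^\perp$, essential selfadjointness of $G_P$ produces $u_n \in D_P$ with $(I-G_P)u_n \to u$; lifting each $u_n$ to $D$ via $D_P \subset \overline{D}^G$ and a second diagonal argument gives $f_n \in D$ with $(I-G)f_n \to u$, while the $\tilde g_n$ above yield $(I-G)\tilde g_n \to v$; linearity then delivers $(I-G)(f_n + \tilde g_n) \to h$. The main obstacle is precisely this bootstrap/approximation step: one must produce elements of $D$ that approximate $v \in H_P^\perp$ while simultaneously cancelling the uncontrolled $G$-values arising from their nonzero $P$-components, and the hypothesis $D_P \subset \overline{D}^G$ is exactly what allows one to subtract off those $P$-components without losing $G$-control.
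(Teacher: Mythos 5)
Your proof is correct, and part~(ii) and the forward implication of part~(i) run along the same lines as the paper's (the projection $P$ carries the dense set $(I-G)(D)$ onto $(I-G_P)(D_P)$; the paper phrases this dually by testing against an annihilating vector, but the two arguments are equivalent). Where you genuinely diverge is the backward implication of part~(i). You prove density of $(I-G)(D)$ directly: you first bootstrap $D_P \subset \overline{D}^G$ to $H_P^\perp \subset \overline{D}^G$ with $\overline{G}\,v=0$ there, and then splice approximating sequences for the $H_P$- and $H_P^\perp$-components via two diagonal extractions. The paper instead dualizes once more: take $g\in H$ with $(g,(I-G)h)_H=0$ for all $h\in D$, observe that this identity extends to $h\in\overline{D}^G$ and in particular to $h=Pf$, deduce $Pg=0$ from density of $(I-G_P)(D_P)$, and then a one-line computation gives $g=0$. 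The paper's route is shorter and avoids the diagonal arguments entirely. Your route works, but note that it silently uses the identity $\overline{G}(Pg)=Gg$ for $g\in D$ (i.e., that the $H$-closure of $(G,D)$ restricted to $D_P$ agrees with $G_P$); this is needed so that graph-norm approximation of $Pg_n$ by $h_n\in D$ forces $Gh_n\to Gg_n$ and hence $G\tilde g_n\to 0$. The identity does hold --- it follows from symmetry of $G$, the relation $(Gg,f)_H=(Pg,Gf)_H$ for $f\in D$, and the inclusion $\overline{G}\subset G^*$ --- but it deserves to be spelled out, since it is the exact point where the hypothesis $D_P\subset\overline{D}^G$ is consumed. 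The dual argument in the paper sidesteps this issue because it only needs to pass the pairing $(g,(I-G)h)_H$ to the limit, never to identify the limiting operator value.
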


\begin{proof} We start with (i). The proof is a generalization of the specific calculations done in the proof of \cite[Prop.~3.13]{GS12B}. First note that essential selfadjointness of $(G,D)$ on $H$ implies $D_P \subset \overline{D}^G$. Indeed, the identity $\left(Gf,h \right)_H =\left(Gf,g \right)_H =\left(f,Gg \right)_H$ for each $f \in \overline{D}^G$ and each $h=Pg$, $g \in D$, yields $h \in \overline{D}^G$ and $Gh=Gg$ by self-adjointness of $(G,\overline{D}^G)$. Thus for the rest of this proof we assume w.l.o.g.~$D_P \subset \overline{D}^G$; Now note that $(G,D)$ (and $(G_P,D_P)$ respectively) is a densely defined, symmetric and nonpositive definite linear operator on the respective Hilbert space. Thus essential selfadjointness is equivalent to essential m-dissipativity of these both operators on the respective Hilbert space, see e.g.~\cite[Rem.~1.1.20]{Con11}. Thus it remains to verify that $(I-G)(D)$ is dense in $H$ if and only if $(I-G_P)(D_P)$ is dense in $H_P$. Assume first that $(I-G)(D)$ is dense in $H$. Let $g \in H_P$ such that 
\begin{align*}
\left(g,(I-G_P)h\right)_{H_P}=0 \quad \mbox{for all }h=Pf,~f \in D.
\end{align*}
We have to show that $g$ must be zero. But this is clear since 
\begin{align*}
0=\left(g,(I-G_P)h\right)_{H_P}=\left(g,(I-G)f\right)_H
\end{align*}
for each such $f \in D$ with $h=Pf$. Here we have used that $G_Ph=Gf$, $PG=G$, $Pg=g$ and the symmetry of $P$. Thus $g=0$ since $(I-G)(D)$ is dense in $H$. Now let $(I-G_P)(D_P)$ be dense in $H_P$ and let $g \in H$ be chosen such that
\begin{align} \label{Eq_Lm_equivalent_condition_for_H4}
\left(g,(I-G)h\right)_H=0 \quad \mbox{for all }h \in D.
\end{align}
Again we need to show that $g=0$. Note that the previous equation carries over to each $h \in \overline{D}^G$, in particular is satisfied for all $h=Pf$ with $f \in D$ since $D_P \subset \overline{D}^G$. Thus for each such $h$ we obtain
\begin{align*}
\left(Pg,(I-G_P)h\right)_{H_P}=\left(Pg,(I-G)h\right)_H=\left(g,(I-G)h\right)_H=0.
\end{align*}
Here we used that  $D_P \subset \overline{D}^G$ easily implies the relation $G_{|D_P}=G_P$ on $D_P$. Thus $Pg=0$ since $(I-G_P)(D_P)$ is dense in $H_P$. Then Equation \eqref{Eq_Lm_equivalent_condition_for_H4} implies
\begin{align*}
\left(g,h\right)_H=\left(g,Gh\right)_H=\left(g,PGh\right)_H=\left(Pg,Gh\right)_H=0 \quad \mbox{for all } h \in D.
\end{align*}
This finally yields that $g=0$ because $D$ is dense in $H$. Part (ii) directly follows by a straightforward calculation.
\end{proof}


Now we go on and apply the abstract ergodicity framework to the $N$-particle Langevin dynamics.

\subsection{Proofs to Section \ref{Results_Applications_N_particle_Langevin} (The Langevin dynamics)} \label{subsection_proofs_to_Langevin_dynamics}

First recall the definitions and notations introduced in Section \ref{Results_Applications_N_particle_Langevin}. For the verification of the assumptions, we recapitulate sometimes calculations done in \cite{CG10} for a better understanding. We start with the proof of Proposition \ref{Pp_(P)_implies_A_and_S_Langevin}.

\begin{proof}[proof of Proposition \ref{Pp_(P)_implies_A_and_S_Langevin}]

Recall that the state space for the $N$-particle system is
\begin{align*}
E= \widetilde{\mathbb{R}^{dN}}\times \mathbb{R}^{dN} \quad \mbox{where}\quad \widetilde{\mathbb{R}^{dN}}\df \left\{ x \in \mathbb{R}^{dN}~\big|~\Phi(x) < \infty \right\}. 
\end{align*} 
We remark that this space is denoted with the symbol $\widetilde{E}$ in \cite{CG10}. Due to the first assumption from (C0), note that $E$ is an open subset of $\mathbb{R}^{dN} \times \mathbb{R}^{dN}$. Thus $E$ is equipped with the usual Euclidean metric. Accordingly to (A1) we set
\begin{align*}
H=L^2(E,\mu_{\Phi})=L^2(\mathbb{R}^{dN} \times \mathbb{R}^{dN},\mu_{\Phi,\beta}).
\end{align*}
Further note that $\mu_{\Phi,\beta}(E)=1$ by (P1). The set $D$ from (A2) is given by 
\begin{align*}
D\df C_c^\infty(E).
\end{align*}
The latter denotes the space of all infinitely often differentiable functions with compact support in $E$. Standard arguments imply that $D$ is dense in $H$. For the verification, use that $C_c(E)$ is dense in $L^2(E,\mu_{\Phi})$ (see \cite[Theo.~29.14]{Bau92}) and combine this with the fact that $C_c^\infty(E)$ is dense in $C_c(E)$ w.r.t.~the sup-norm. Here the last statement is implied by the extended version of the Stone-Weierstra"{s} theorem, see \cite[Sec.~7.38]{Sim63}.

Now the operator $(L,D)$ on $H$ is defined via \eqref{Eq_generator_N_particle_Langevin}. It is clearly well-defined by the assumptions from (C0). $S$ and $A$ are defined as
\begin{align} \label{Eq_generator_S_and_A_N_particle_Langevin}
S=\frac{\alpha}{\beta} \, \Delta_\omega - \alpha~ \omega \cdot \nabla_\omega \quad \mbox{and} \quad A=- \omega \cdot \nabla_x + \frac{1}{\beta}\,\nabla_x \Phi \cdot \nabla_\omega.
\end{align}
Integration by parts shows that $(S,D)$ and $(A,D)$ are satisfying the desired properties from (A2) and by using the same argument again, also (A3) is easily checked. For more details see \cite[Sec.~6.2]{Con11}. 

Of course, the construction of the laws $\mathbb{P}$ and $\hat{\mathbb{P}}$ on $C([0,\infty),E)$ solving the martingale problems as needed in (S) requires hard work. For the existence of the laws consider \cite[Theo.~3.1(iii)]{CG10} and \cite[Rem.~3.3]{CG10} which is based on modern methods from \cite{BBR06}. For the statements on the solutions to the required martingale problems see e.g.~\cite[Lem.~3.21]{CG10}. More precisely, the martingale problems are implied by Theorem \ref{Thm_Martingale_Problem_via_regular semigroups} in the Langevin case. Indeed, in \cite{CG10} a regular conservative $\mu_{\Phi,\beta}$-invariant s.c.c.s.~$(T_{t,2})_{t \geq 0}$ on $L^2(\mu_{\Phi,\beta})$ as required in (A)' is constructed which is associated with $\mathbb{P}$ as in (S)'. The existence of the desired semigroup and associatedness with the law $\mathbb{P}$ is proven in \cite{CG10}, see Theorem 3.1 therein, and is based on showing essential m-dissipativity of $(L,D)$ in $L^1(E,\mu_\Phi)$. Moreover, as desired, the generator $(L_2,D(L_2))$ indeed extends $(L,D)$ and the adjoint $(\hat{L}_2,D(\hat{L}_2))$ extends $(\hat{L},D)$. This is stated in \cite[Theo.~3.1]{CG10} and in \cite[Lem.~3.16(ii)]{CG10}.
\end{proof}

\begin{Rm} 
\begin{itemize}
\item[(i)] We mention that the expression regular is not used in \cite{CG10}. So, in this context, recall the definition of regularity from Section \ref{Regular sub-Markovian semigroups and associated laws} as well as Proposition \ref{Pp_regular_sccs}. With this at hand \cite[Theo.~3.1]{CG10} then really implies (A)'. The semigroup existence statement can equivalently also be found in \cite{Con11}, see Theorem 6.4.1 therein.
\item[(ii)] The reader may be confused since the definition of associatedness as in \cite{CG10} does not quite coincide with the definition of associatedness given in Section \ref{section_Results} (which is taken from \cite[Def.~2.1.3]{Con11}). However, the associatedness definition of $\mathbb{P}$ with $(T_{t,2})_{t \geq 0}$ from \cite{CG10} implies the desired associatedness condition as defined in Section \ref{section_Results}. This follows by a monotone class argument as in the proof of \cite[Lem.~2.1.4]{Con11}. Alternatively, for the existence of $\mathbb{P}$ and $\hat{\mathbb{P}}$ one may also consider \cite{Con11} directly, see Corollary 6.4.3 and Remark 6.4.4 therein. 
\end{itemize}
\end{Rm}

Now we verify (E1)-(E4). Some calculations are similar to calculations done in \cite{GS14} in a hypocoercivity setting or as in  \cite{CG10} where the orginal ergodicity elaboration for the $N$-particle Langevin dynamics can be found. In order to have a complete argumentation and presentation in this section, we stay detailed below. Of course, we always assume that Condition (C0) from Section \ref{Results_Applications_N_particle_Langevin} is satisfied without mention this explicitly. Conditions (C1)-(C3) are only needed in order to verify (E2) and (E3) and are therefore assumed later on. We start with (E1) and introduce  $P\colon H \to H$ in the ergodicity framework by
\begin{align*}
Pf=\int_E f \d \nu_\beta \quad \mbox{for each } f \in H.
\end{align*}
Note that $P$ can also be defined on $L^2(\mathbb{R}^{dN},\nu_\beta)$ and is an orthogonal projection in both cases. Recall the well-known fact that the Ornstein-Uhlenbeck operator $(S,C_c^\infty(\mathbb{R}^{dN}))$ is a nonpositive definite, essentially selfadjoint operator on $L^2(\mathbb{R}^{dN},\nu_\beta)$. Moreover, recall the Poincaré inequality for the Gaussian measure (see \cite{Be89}) which easily implies
\begin{align*}
\|\nabla_x f \|^2_{L^2(\nu_\beta)} \geq \beta \,\| f- Pf \|^2_{L^2(\nu_\beta)} \quad \mbox{for all } f \in C_c^\infty(\mathbb{R}^{dN}).
\end{align*}
We have the following (well-known) lemma. 

\begin{Lm} \label{Lm_Verifying_H1_N_particle_Langevin}
Assume that $\Phi$ satisfies (C0). The operator $(S,D)$ is nonpositive definite and essentially selfadjoint on $H$. For the kernel $\mathcal{N}(S)$ of its closure on $H$ we have
\begin{align*}
\mathcal{N}(S)=\mathcal{R}(P)=L^2(\widetilde{\mathbb{R}^{dN}},e^{-\Phi}\mathrm{d}x)
\end{align*}
Here $L^2(\widetilde{\mathbb{R}^{dN}},e^{-\Phi}\mathrm{d}x)$ is viewed as canonically be embedded in $L^2(E,\mu_{\Phi,\beta})$. In other words, $P$ is really the orthogonal projection onto $\mathcal{N}(S)$. Moreover,
\begin{align*}
-\left(Sf,f\right)_H \geq \alpha\,\| f- Pf \|^2_{H} \quad \mbox{for all } f \in D.
\end{align*}
In particular, Condition (E1) holds with $\Lambda_m=\alpha$.
\end{Lm}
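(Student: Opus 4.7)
The plan is to exploit the product structure $\mu_{\Phi,\beta} = e^{-\Phi(x)}\mathrm{d}x \otimes \nu_\beta$ together with the fact that $S = \tfrac{\alpha}{\beta}\Delta_\omega - \alpha\,\omega\cdot \nabla_\omega$ acts only in the velocity variable. Under the natural unitary identification $H \cong L^2(\widetilde{\mathbb{R}^{dN}}, e^{-\Phi}\mathrm{d}x) \otimes L^2(\mathbb{R}^{dN}, \nu_\beta)$ the operator $S$ corresponds to $I \otimes S_0$, where $(S_0, C_c^\infty(\mathbb{R}^{dN}))$ is the classical Ornstein--Uhlenbeck operator on $L^2(\mathbb{R}^{dN}, \nu_\beta)$. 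Everything will be deduced from classical facts about $S_0$ via this tensor decomposition plus Fubini.

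First I would record the standard facts about $S_0$: integration by parts against $\nu_\beta$ gives $-(S_0 g, g)_{L^2(\nu_\beta)} = \tfrac{\alpha}{\beta}\|\nabla g\|^2_{L^2(\nu_\beta)}$, so $S_0$ is nonpositive; the Gaussian Poincar\'e inequality (Poincar\'e constant $\beta$ for $\nu_\beta$) then upgrades this to $-(S_0 g, g)_{L^2(\nu_\beta)} \geq \alpha\,\|g - \int g\, \mathrm{d}\nu_\beta\|^2_{L^2(\nu_\beta)}$. Essential selfadjointness of $(S_0, C_c^\infty(\mathbb{R}^{dN}))$ on $L^2(\mathbb{R}^{dN}, \nu_\beta)$ is classical (e.g.\ via Hermite expansion or via the hypercontractive semigroup), and the kernel of its closure is exactly the constant functions.

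Next I would transfer these properties to $(S, D)$ on $H$. Nonpositivity is pointwise in $x$. The microscopic coercivity estimate follows by Fubini: for $f \in D$, applying the Gaussian Poincar\'e inequality fibrewise to $\omega \mapsto f(x, \omega)$ and integrating against $e^{-\Phi(x)}\mathrm{d}x$ yields
\begin{align*}
-(Sf, f)_H \,=\, \tfrac{\alpha}{\beta}\int \|\nabla_\omega f(x,\cdot)\|^2_{L^2(\nu_\beta)}\, e^{-\Phi(x)}\mathrm{d}x \,\geq\, \alpha\, \|f - Pf\|_H^2,
\end{align*}
so (E1) holds with $\Lambda_m = \alpha$. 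For essential selfadjointness I would show that the algebraic tensor domain $C_c^\infty(\widetilde{\mathbb{R}^{dN}}) \otimes_{\mathrm{alg}} C_c^\infty(\mathbb{R}^{dN})$ is contained in $D = C_c^\infty(E)$ and forms a core for $S$ there (via standard cut-off and mollification in each factor), and then invoke the classical tensor-product criterion (Reed--Simon, Thm.~VIII.33): essential selfadjointness of $S_0$ on a dense domain in $L^2(\nu_\beta)$ implies essential selfadjointness of $I \otimes S_0$ on the algebraic tensor product. This yields $(S,\overline{D}^S) = I \otimes \overline{S_0}$, and consequently
\begin{align*}
\mathcal{N}(S) \,=\, L^2(\widetilde{\mathbb{R}^{dN}}, e^{-\Phi}\mathrm{d}x) \otimes \mathcal{N}(\overline{S_0}) \,=\, L^2(\widetilde{\mathbb{R}^{dN}}, e^{-\Phi}\mathrm{d}x) \otimes \mathbb{R}\cdot 1,
\end{align*}
which under the canonical embedding is exactly the subspace of functions independent of $\omega$, that is, $\mathcal{R}(P)$; the identification of $P$ as the orthogonal projection onto $\mathcal{N}(S)$ is then immediate from the defining formula $Pf(x) = \int f(x,\omega)\,\mathrm{d}\nu_\beta(\omega)$.

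The main obstacle I anticipate is the core property, i.e.\ justifying that the tensor-product test functions are dense in $C_c^\infty(E)$ in the graph norm of $S$. This requires some care because $\widetilde{\mathbb{R}^{dN}}$ is only an open set of full measure under $e^{-\Phi}\mathrm{d}x$ (the singular set $\{\Phi = \infty\}$ may be quite irregular), so the cut-off and mollification argument in the $x$-variable must respect this set. Everything else reduces cleanly to classical Gaussian analysis combined with Fubini.
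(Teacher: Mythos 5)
Your proof is essentially correct and follows the same backbone as the paper's: reduce to the Ornstein--Uhlenbeck operator $S_0$ on $L^2(\nu_\beta)$, use the Gaussian Poincar\'e inequality with constant $\beta$ together with Fubini to get the microscopic coercivity estimate $-(Sf,f)_H \geq \alpha\|f-Pf\|_H^2$, and invoke the tensor-product criterion of Reed--Simon (Thm.~VIII.33) for essential selfadjointness before identifying $\mathcal{N}(S)$ with the $\omega$-independent functions.

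The one place where your route differs from the paper's --- and where you correctly flagged the ``main obstacle'' --- is the core property, and here the paper has a cleaner trick that entirely sidesteps your worry. You propose to prove, via cut-off and mollification in each factor, that $C_c^\infty(\widetilde{\mathbb{R}^{dN}})\otimes_{\mathrm{alg}} C_c^\infty(\mathbb{R}^{dN})$ is a \emph{core} for $(S,D)$ in the graph norm, and you note (rightly) that making this precise on the possibly irregular open set $\widetilde{\mathbb{R}^{dN}}$ takes some care. The paper avoids this direct density argument: it first establishes, via Reed--Simon VIII.33, that $S$ restricted to the algebraic tensor domain is essentially selfadjoint on $H$. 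Then, since $(S,D)$ with $D=C_c^\infty(E)$ is a \emph{symmetric} operator extending that tensor-domain operator, and since a selfadjoint operator has no proper symmetric extensions, the closure of $(S,D)$ must coincide with the selfadjoint closure already obtained, so $(S,D)$ is automatically essentially selfadjoint. This reverses the burden: instead of showing the small domain is dense in the large one in graph norm, one uses maximality of selfadjoint operators, which requires no mollification at all. Your proof is not wrong, but it would commit you to a technical verification that the published argument renders unnecessary; adopting the maximality argument would close the gap you anticipated without extra work.

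A small remark on the kernel identification: the paper does not pass through the tensor-factorization $\mathcal{N}(S)=L^2(e^{-\Phi}\mathrm{d}x)\otimes \mathbb{R}\cdot 1$ explicitly, but instead extracts $\mathcal{N}(S)=\mathcal{R}(P)$ directly from the coercivity inequality (once extended to $\overline{D}^S$) in one direction, and from a standard approximation showing $L^2(\widetilde{\mathbb{R}^{dN}},e^{-\Phi}\mathrm{d}x)\subset\overline{D}^S$ with $S=0$ there in the other. Your tensor-product identification gives the same conclusion and is equally valid once essential selfadjointness is in hand.
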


\begin{proof}
As seen in \cite{CG08}, essential selfadjointness of $(S,C_c^\infty(\mathbb{R}^{dN}))$ in $L^2(\mathbb{R}^{dN},\nu_\beta)$ implies essential selfadjointness of $(S,C_c^\infty(\widetilde{\mathbb{R}^{dN}}) \otimes C_c^\infty(\mathbb{R}^{dN}))$ in $H$ by \cite[Theo.~VIII.33]{RS80}. Thus essential selfadjointness of $(S,D)$ in $H$ follows since selfadjoint operators do not possess proper symmetric extensions. Now the Poincaré inequality above yields
\begin{align*}
-\left(Sf,f\right)_H = \frac{\alpha}{\beta} \, \|\nabla_x f \|^2_{H} \geq \alpha \| f- Pf \|^2_{H}\quad \mbox{for all } f \in D.
\end{align*}
Clearly, the last inequality then carries over to each $f \in \overline{D}^S$. Hence for some $f \in \overline{D}^S$ with $Sf=0$ it follows that $f=Pf$. Vice versa, standard approximation shows that 
\begin{align*}
L^2(\widetilde{\mathbb{R}^{dN}},e^{-\Phi}\mathrm{d}x) \subset \overline{D}^S\quad \mbox{and} \quad Sf=0 \quad \mbox{for each }f \in L^2(\widetilde{\mathbb{R}^{dN}},e^{-\Phi}\mathrm{d}x).
\end{align*}
The proof is finished.
\end{proof}

Now we verify the technical condition in \eqref{Technical_Condition_Ergodicity}. In this context we also prove (E4). Part (i) of the upcoming lemma is similar to \cite[Lem.~3.7]{CG10}. Since notations differ below, we present the full proof. 

\begin{Lm} \label{Lm_verification_H2_langevin}
Assume that $\Phi$ satisfies (C0).
\begin{itemize}
\item[(i)]
Then $P(D)=C_c^\infty(\widetilde{\mathbb{R}^{dN}}) \subset \overline{D}^A$ and $Af=-\omega \cdot \nabla_x f$ for all $f \in C_c^\infty(\widetilde{\mathbb{R}^{dN}})$. Moreover, for each such $f$ we also have that 
\begin{align*}
g\df Af \in \overline{D}^A \cap \overline{D}^S \cap \overline{D}^L \quad \mbox{and} \quad Lg=Sg-Ag
\end{align*}
with the natural representation formulas for $Sg$ and $Ag$ as in  \eqref{Eq_generator_S_and_A_N_particle_Langevin}. 
\item[(ii)] Condition (E4) is fulfilled with $c_3=\alpha$.
\end{itemize}
\end{Lm}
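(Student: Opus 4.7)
For part (i), the plan is to exhibit an explicit approximating sequence. Pick any $f \in C_c^\infty(\widetilde{\mathbb{R}^{dN}})$; since $f$ has no decay in $\omega$, it is not itself in $D=C_c^\infty(E)$, so I would introduce a velocity cut-off $\chi_n(\omega)\df\chi(\omega/n)$ with a fixed $\chi \in C_c^\infty(\mathbb{R}^{dN})$ equal to $1$ near $0$, and set $f_n(x,\omega)\df f(x)\chi_n(\omega)\in D$. By dominated convergence $f_n \to f$ in $H$. A direct computation using $\nabla_\omega f_n=f(x)\,\nabla_\omega\chi_n$ and $\|\nabla_\omega\chi_n\|_\infty=O(1/n)$ gives
\begin{align*}
A f_n = -(\omega\cdot\nabla_x f)\chi_n + \tfrac{1}{\beta}(\nabla_x\Phi\cdot\nabla_\omega\chi_n)f,
\end{align*}
and the error term tends to $0$ in $H$ because $\nabla_x\Phi\in L^2(e^{-\Phi}\mathrm{d}x)$ by (C0) and $f$ is compactly supported bounded, while the main term converges to $-\omega\cdot\nabla_x f$. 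Closedness of $(A,\overline{D}^A)$ then yields $f\in\overline{D}^A$ with the claimed representation. Note also that $Pf=f$ since $f$ is independent of $\omega$, so $P(D)=C_c^\infty(\widetilde{\mathbb{R}^{dN}})$.

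Next, for $g\df Af=-\omega\cdot\nabla_x f$, I would use the same cut-off trick with $g_n\df-(\omega\cdot\nabla_x f)\chi_n\in D$. Straightforward differentiation gives
\begin{align*}
S g_n &\longrightarrow \alpha\,\omega\cdot\nabla_x f,\\
A g_n &\longrightarrow \sum_{i,j}\omega_i\omega_j\,\partial_{x_i x_j}f - \tfrac{1}{\beta}\nabla_x\Phi\cdot\nabla_x f
\end{align*}
in $H$, where again every term containing a derivative of $\chi_n$ is killed by the $O(1/n)$ bound together with the integrability of $\nabla_x\Phi$ and the compact support of $f$. Closedness of $S$, $A$ and $L$ yields $g\in\overline{D}^S\cap\overline{D}^A\cap\overline{D}^L$, and because $L=S-A$ on $D$ the identity $Lg=Sg-Ag$ passes to the limit. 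The main technical nuisance here is controlling the term $\nabla_x\Phi\cdot(\omega\cdot\nabla_x f)\,\nabla_\omega\chi_n$; this is exactly where assumption (C0)(iii), namely $\nabla_x\Phi\in L^2(e^{-\Phi}\mathrm{d}x)$, is indispensable.

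Part (ii) then reduces to a single calculation. From (i) we have $APf=-\omega\cdot\nabla_x f$ and $SAPf=\alpha\,\omega\cdot\nabla_x f$, so by Fubini and $\int\omega_i\omega_j\,\mathrm{d}\nu_\beta=\beta^{-1}\delta_{ij}$,
\begin{align*}
(SAPf,APf)_H = -\alpha\sum_{i,j}\int\omega_i\omega_j\,\mathrm{d}\nu_\beta\int\partial_{x_i}f\,\partial_{x_j}f\,e^{-\Phi}\mathrm{d}x = -\tfrac{\alpha}{\beta}\|\nabla_x f\|_{H_\Phi}^2.
\end{align*}
On the other hand, applying $A$ once more and projecting by $P$ to annihilate all odd polynomials in $\omega$ gives $Gf=PA^2Pf=\tfrac{1}{\beta}(\Delta_x f-\nabla_x\Phi\cdot\nabla_x f)=\tfrac{1}{\beta}G_\Phi f$, whence $(Gf,f)_H=-\tfrac{1}{\beta}\|\nabla_x f\|_{H_\Phi}^2$. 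Comparing the two expressions yields $(SAPf,APf)_H=\alpha\,(Gf,f)_H$, so (E4) holds with $c_3=\alpha$. No serious obstacle is expected beyond the cut-off argument in part (i); everything else is bookkeeping on Gaussian moments.
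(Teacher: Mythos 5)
Your proof is correct and takes essentially the same route as the paper: for part (i) a velocity cut-off $\chi_n$, dominated convergence powered by the $O(1/n)$ bounds on $\nabla_\omega\chi_n$ and $\nabla_\omega^2\chi_n$ together with $|\omega|,|\omega|^2\in L^2(\nu_\beta)$ and $\nabla_x\Phi\in L^2(e^{-\Phi}\mathrm{d}x)$, then closedness of $A$, $S$, $L$; for part (ii) the algebraic identity $SAPf=\alpha\,\omega\cdot\nabla_x f=-\alpha\,APf$. The only cosmetic difference is in (ii), where you evaluate the Gaussian second moments $\int\omega_i\omega_j\,\mathrm{d}\nu_\beta=\beta^{-1}\delta_{ij}$ by hand, while the paper instead invokes the abstract relation $\|APf\|_H^2=-(Gf,f)_H$ from \eqref{Eq_cauchy_APf}; the conclusion $c_3=\alpha$ is the same.
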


\begin{proof}
We prove (i). Let $f \in C_c^\infty(\widetilde{\mathbb{R}^{dN}})$. Choose some $\varphi \in C_c^\infty(\mathbb{R}^{dN})$ such that $0 \leq \varphi \leq 1$, $\varphi =1$ on $B_1(0)$ and $\varphi=0$ outside $B_2(0)$ where $B_r(z)$ denotes the open ball w.r.t.~the Euclidean norm of radius $r>0$ around the point $z \in \mathbb{R}^{dN}$.  Define 
\begin{align*}
\varphi_n(z) \df \varphi(\frac{z}{n}) \quad \mbox{for each $z \in \mathbb{R}^{dN}$, $n \in \mathbb{N}$}.
\end{align*}
Then there exists a constant $C < \infty$, independent of $n \in \mathbb{N}$, such that
\begin{align} \label{eq_boundedness_derivatives_varphi_n}
|\partial_{i} \varphi_n(z)| \leq  \frac{C}{n}, \quad |\partial_{ij} \varphi_n(z )| \leq \frac{C}{n^2}\quad \mbox{for all } z \in \mathbb{R}^{dN},~1 \leq i,j \leq dN.
\end{align}
Moreover, clearly $0 \leq \varphi_n \leq 1$ for all $n \in \mathbb{N}$ and $\varphi_n \to 1$ pointwisely on $\mathbb{R}^{dN}$ as $n \to \infty$. Define $f_n(x,\omega)\df f(x) \,\varphi_n(\omega)$ for $(x,\omega) \in E$ and $n \in \mathbb{N}$. Then dominated convergence implies
\begin{align*}
\omega \cdot \nabla_x f_n =  \varphi_n ~\omega \cdot \nabla_{x} f  \stackrel{n \rightarrow \infty}{\longrightarrow} \omega \cdot \nabla_{x} f,\quad \nabla_x \Phi \cdot \nabla_\omega f_n =  f ~\nabla_x \Phi \cdot \nabla_{\omega} \varphi_n  \stackrel{n \rightarrow \infty}{\longrightarrow} 0
\end{align*} 
with convergence in $H=L^2(e^{-\Phi} \mathrm{d} x \otimes \nu_\beta)$. Here we have used that $|\omega| \in L^2(\nu_\beta)$, \eqref{eq_boundedness_derivatives_varphi_n} and $\nabla_x \Phi \in L^2(e^{-\Phi} \mathrm{d}x)$. Thus $f \in \overline{D}^A$ and the formula for $Af$ is shown.\\
Now let $g$ be of the form $g=\omega_i \, h$ where $h \in C_c^\infty(\widetilde{\mathbb{R}^{dN}})$. Here $\omega_i$ denotes the coordinate function $\mathbb{R}^{dN} \ni \omega \mapsto \omega _i \in \mathbb{R}$ for some $1 \leq i \leq dN$. Define $g_n$, $n \in \mathbb{N}$, by 
\begin{align*}
g_n(x,\omega)=\varphi_n(\omega) \, \omega_i \, h(x) \quad \mbox{for all } (x,\omega) \in E.
\end{align*}
Then again by dominated convergence in combination with $|\omega|,|\omega|^2 \in L^2(\nu_\beta)$ and \eqref{eq_boundedness_derivatives_varphi_n} we can infer that
\begin{align*}
Sg_n = &\frac{\alpha}{\beta} \,h \left( 2\,\partial_{\omega_i} \varphi_n + \omega_i \,\Delta_\omega \,\varphi_n \right) - \alpha \,g_n - \alpha \, h\,\omega_i ~ \omega \cdot \nabla_\omega \varphi_n \\
&\stackrel{n \rightarrow \infty}{\longrightarrow} -\alpha \,g=\left(\frac{\alpha}{\beta} \,\Delta_\omega - \alpha \,\omega \cdot \nabla_\omega \right)g
\end{align*}
with convergence in $H$. Similarly, we obtain
\begin{align*}
\omega \cdot \nabla_x \,g_n = \varphi_n \, \omega_i ~\omega \cdot \nabla_x h \stackrel{n \rightarrow \infty}{\longrightarrow} \omega \cdot \nabla_x g
\end{align*}
as well as
\begin{align*}
\nabla_x \Phi \cdot \nabla_\omega g_n =h\,\omega_i\,\nabla_x\Phi \cdot \nabla_\omega \varphi_n + \varphi_n \, h \, \partial_{x_i} \Phi \stackrel{n \rightarrow \infty}{\longrightarrow} \nabla_x \Phi \cdot \nabla_\omega g
\end{align*}
with convergence in $H$ in each case. Now we prove (ii). Thus for $f \in D$ we denote
\begin{align} \label{Df_f_S_Langevin_case}
f_S\df Pf \in C_c^\infty(\widetilde{\mathbb{R}^{dN}}).
\end{align}
By part (i) we obtain for each $f \in D$ that
\begin{align*}
SAPf=-S\left(\omega \cdot \nabla_x f_S\right)=\alpha~ \omega \cdot \nabla_x f_S = -\alpha \,APf.
\end{align*}
In particular, the algebraic relation from Condition (E4) is fulfilled with $c_3=\alpha$. 
\end{proof}

Next we verify (E2) and (E3). First we compute $(G,D_P)=(PA^2P,D_P)$ and still assume that (C0) is satisfied. So, by using the notations from Section \ref{Results_abstract_ergodicity_method} we have
\begin{align*}
D_P=P(D)=C_c^\infty(\widetilde{\mathbb{R}^{dN}}),\quad H_P=P(H)=L^2(\widetilde{\mathbb{R}^{dN}},e^{-\Phi}\mathrm{d}x)=H_{\Phi}
\end{align*} 
Furthermore, for $f \in D$ we get
\begin{align*}
A^2Pf=- A \left( \omega \cdot \nabla_x f_S \right) = \left( \omega, \nabla^2_x f_S \,\omega\right)_{\text{euc}} - \frac{1}{\beta}\,\nabla_x \Phi  \cdot \nabla_x f_S
\end{align*}
where $\nabla^2_x$ denotes the Hessian matrix in Euclidean space and $f_S$ is defined in \eqref{Df_f_S_Langevin_case}. Thus observe 
\begin{align*}
PA^2Pf=\frac{1}{\beta} \,\Delta_x f_S - \frac{1}{\beta} \,\nabla_x \Phi  \cdot \nabla_x f_S \quad \mbox{for all } f \in D.
\end{align*}
And therefore
\begin{align} \label{Relation_G_G_psi}
G= \frac{1}{\beta} \left( \Delta_x - \nabla_x \Phi  \cdot \nabla_x \right) = \frac{1}{\beta} G_{\Phi} \quad \mbox{on } D_P=C_c^\infty(\widetilde{\mathbb{R}^{dN}}).
\end{align}
This connects $G$ with $G_{\Phi}$ and (E2), (E3) can easily be verified, see next. However, first note that (C1) implies that there exists constants $K_i(\Phi) \in [0,\infty)$, $1 \leq i \leq 4$, independent of $f \in C_c^\infty(\widetilde{\mathbb{R}^{dN}})$ and only depending on the choice of $\Phi$ such that
\begin{align} \label{Eq_Kato_bound_E3}
&\sum_{i,j=1}^{dN} \left\| \partial_{x_i} \partial_{x_j} f \right\|_{H_\Phi} \leq K_1(\Phi) \left\| G_\Phi f \right\|_{H_\Phi} + K_3(\Phi) \left\| f \right\|_{H_\Phi},\\
&\sum_{i=1}^{dN} \left\| \left(\partial_{x_i}\Phi \right) \, \partial_{x_i} f \right\|_{H_\Phi} \leq K_2(\Phi) \left\| G_\Phi f \right\|_{H_\Phi} + K_4(\Phi) \left\| f \right\|_{H_\Phi}. \nonumber
\end{align}

\begin{Lm} \label{Lm_Verifying_H3_H4_N_particle_Langevin}
Assume that $\Phi$ satisfies (C0)-(C3). Then the ergodicity conditions (E2) and (E3) are fulfilled. Moreover, the constants $c_1$ and $c_2$ from (E2) can be computed as
\begin{align*}
c_1= \frac{\alpha\, \sqrt{\beta}}{\sqrt{\text{\normalfont{gap}}(G_{\Phi})}} + \sqrt{3} \,  K_1(\Phi) +  K_2(\Phi),\quad c_2=  \frac{\sqrt{3}}{\beta} \, K_3(\Phi) + \frac{1}{\beta} \,K_4(\Phi).
\end{align*}
Here the constants $K_i(\Phi) \in [0,\infty)$ are obtained by the Kato-bound from (C1) in Equation \eqref{Eq_Kato_bound_E3} and are only depending on the choice of $\Phi$. Moreover,
\begin{align*}
\Lambda_M = \frac{\text{\normalfont{gap}}(G_{\Phi})}{\beta}.
\end{align*}
\end{Lm}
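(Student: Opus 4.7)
The plan is to reduce both (E2) and (E3) to properties of the operator $G_\Phi$ via the identity $G = \tfrac{1}{\beta} G_\Phi$ on $D_P = C_c^\infty(\widetilde{\mathbb{R}^{dN}})$ recorded in \eqref{Relation_G_G_psi}, combined with the explicit formulas for $APf$ and $A^2Pf$ derived from Lemma \ref{Lm_verification_H2_langevin} and the computation just before \eqref{Relation_G_G_psi}.

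For (E3), essential selfadjointness of $(G, D_P)$ on $H_P = H_\Phi$ follows directly from (C2) by rescaling. To check $1 \in \overline{D_P}^G$ with $G1 = 0$, I would observe that $1 \in H_P$ (since $e^{-\Phi}\mathrm{d}x$ is a probability measure) and use selfadjointness of $\overline{G}$ together with the integration-by-parts identity $(G_\Phi f, 1)_{H_\Phi} = 0$ for $f \in D_P$ to verify that the resolvent $(I - \overline{G})^{-1}$ fixes the constant $1$. The macroscopic coercivity inequality then reduces to a direct rescaling of (C3): for $f \in D_P$ set $g := f - (f,1)_{H_\Phi}$, which lies in $D(\overline{G_\Phi})$ with $(g,1)_{H_\Phi} = 0$ and $\overline{G_\Phi} g = G_\Phi f$; applying (C3) to $g$ and dividing by $\beta$ yields $\Lambda_M = \text{\normalfont{gap}}(G_\Phi)/\beta$.

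For (E2), the first step is to combine Lemma \ref{Lm_verification_H2_langevin}(i)--(ii) with the formula for $A^2P$ to obtain, for $f \in D_P$,
\begin{equation*}
LAPf \,=\, \alpha\, \omega \cdot \nabla_x f \,-\, \left(\omega, \nabla_x^2 f\, \omega\right)_{\text{euc}} \,+\, \tfrac{1}{\beta}\, \nabla_x \Phi \cdot \nabla_x f.
\end{equation*}
By Minkowski it suffices to bound each of the three summands in $H$-norm separately. Using $\int \omega_i \omega_j \, \mathrm{d}\nu_\beta = \beta^{-1}\delta_{ij}$, the first summand has $H$-norm $\alpha \beta^{-1/2}\|\nabla_x f\|_{H_\Phi}$; then the identity $\|\nabla_x f\|_{H_\Phi}^2 = -(G_\Phi f, f)_{H_\Phi}$ combined with the Poincar\'e-type consequence $\|f-(f,1)_{H_\Phi}\|_{H_\Phi} \leq \|G_\Phi f\|_{H_\Phi}/\text{\normalfont{gap}}(G_\Phi)$ of (C3) (applied after subtracting the mean, which leaves $\nabla_x f$ and $G_\Phi f$ unchanged), together with the scaling $\|G_\Phi f\|_{H_\Phi} = \beta\|Gf\|_H$, produces the contribution $\frac{\alpha\sqrt{\beta}}{\sqrt{\text{\normalfont{gap}}(G_\Phi)}}\|Gf\|_H$ to $c_1$. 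The third summand carries no $\omega$-dependence, so Minkowski plus (C1) directly contributes $K_2(\Phi)\|Gf\|_H + \tfrac{1}{\beta}K_4(\Phi)\|f\|_H$.

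The main technical obstacle is the quadratic-form middle term $\left(\omega, \nabla_x^2 f\, \omega\right)_{\text{euc}}$, which I would bound summand-by-summand via Minkowski as $\sum_{i,j}\|\omega_i \omega_j\, \partial_{x_i}\partial_{x_j} f\|_H$ and then evaluate the fourth Gaussian moments $\int \omega_i^2 \omega_j^2\, \mathrm{d}\nu_\beta$, which equal $3\beta^{-2}$ when $i=j$ and $\beta^{-2}$ when $i \neq j$. Taking the worst case yields the uniform, dimension-free bound $\tfrac{\sqrt{3}}{\beta}\sum_{i,j}\|\partial_{x_i}\partial_{x_j} f\|_{H_\Phi}$, to which (C1) contributes $\sqrt{3}K_1(\Phi)\|Gf\|_H + \tfrac{\sqrt{3}}{\beta}K_3(\Phi)\|f\|_H$. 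Summing the three contributions produces exactly the stated constants $c_1$ and $c_2$. The delicate point is that a Cauchy--Schwarz bound on the matrix quadratic form would introduce a dimension-dependent factor, so the summand-by-summand Minkowski treatment is essential in order to preserve the dimension-free constant $\sqrt{3}$.
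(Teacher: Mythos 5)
Your proposal is correct and follows essentially the same route as the paper's proof: you reduce (E3) to (C2) and (C3) via the rescaling $G=\tfrac{1}{\beta}G_\Phi$, handle $1\in\overline{D_P}^G$ by combining the integration-by-parts identity with (essential) selfadjointness, derive the explicit formula $LAPf=\alpha\,\omega\cdot\nabla_x f-\sum_{i,j}\omega_i\omega_j\,\partial_{x_i}\partial_{x_j}f+\tfrac{1}{\beta}\nabla_x\Phi\cdot\nabla_x f$, and then bound the three summands term by term using the Gaussian moment computations ($\|\omega_i\omega_j\|_{L^2(\nu_\beta)}^2=3\beta^{-2}$ or $\beta^{-2}$), the spectral-gap estimate $-(Gf,f)_{H_P}\le\frac{1}{\Lambda_M}\|Gf\|_{H_P}^2$, and the Kato bound \eqref{Eq_Kato_bound_E3}, exactly recovering $c_1$, $c_2$ and $\Lambda_M$ as stated. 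Your closing observation that the Minkowski-per-entry treatment of the Hessian quadratic form is what keeps the $\sqrt{3}$ dimension-free is a nice complement, though it is implicit rather than emphasized in the paper's argument.
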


\begin{proof}
We first verify (E3). By the relation of $G$ with $G_\Phi$ from \eqref{Relation_G_G_psi} essential selfadjointness of $(G,D_P)$ as required in (E3) follows by (C2). Clearly, $1 \in H_P$. Now also $1 \in \overline{D_P}^G$ with $G1=0$. This follows as explained in \cite[p.~175]{Con11}. Indeed, integration by parts imply $\left(1,Gf\right)_{H_P}=0$ for each $f \in D_P$. Thus $1 \in \overline{D_P}^G$ and $G1=0$ since $(G,\overline{D_P}^G)$ is selfadjoint on $H_P$. The spectral gap condition in (C3) can now equivalently be restated as the macroscopic coercivity inequality required in (E3). Then $\Lambda_M$ can be chosen as $\Lambda_M=\frac{\text{\normalfont{gap}}(G_{\Phi})}{\beta}$. Next we prove (E2). We mention that the Kato-boundedness condition in (E2) is exactly the statement of \cite[Lem.~4.9]{CG10}. For completeness and in view of computing the rate of convergence we recall the argument here. Let $f \in C_c^\infty(\widetilde{\mathbb{R}^{dN}})$, thus $f=Pg=g_S$ for a suitable $g \in D$. Then the previous calculations give
\begin{align*}
LAPf=LAPg = \alpha~ \omega \cdot \nabla_x f - \sum_{i,j=1}^{dN} \omega_i \, \omega_j \,\partial_{x_i}\partial_{x_j} f   + \frac{1}{\beta}\,\nabla_x \Phi  \cdot \nabla_x f
\end{align*}
Further note $\| \nabla_x f \|^2_{H_P}= - \beta \, \left(G f, f \right)_{H_P}$ and $\| \omega \cdot \nabla_x f\|^2_H=\frac{1}{\beta} \|\nabla_x f\|_{H_P}^2$. Hence 
\begin{align*}
\| \omega \cdot \nabla_x f\|^2_H= -\left(Gf,f\right)_{H_P} = -\left(Gh,h\right)_{H_P} \leq \frac{1}{\Lambda_M} \, \| G h \|^2_{H_P} = \frac{\beta}{\text{\normalfont{gap}}(G_{\Phi})} \, \|Gf\|^2_H.
\end{align*}
where $h \df f - \left (f,1\right)_{H_\Phi} \in \overline{D_P}^G$. Moreover, one easily computes that 
\begin{align*}
\left\|\omega_i^2\right\|_{L^2(\mathbb{R}^{dN},\nu_\beta)}^2=\frac{3}{\beta^2},\quad \left\|\omega_i\, \omega_j\right\|_{L^2(\mathbb{R}^{dN},\nu_\beta)}^2=\frac{1}{\beta^2} \mbox{ for } i \not=j,\quad 1 \leq i,j \leq dN.
\end{align*}
Altogether, this yields
\begin{align*}
\|LAPf\|_H \leq &\frac{\alpha \, \sqrt{\beta}}{\sqrt{\text{\normalfont{gap}}(G_{\Phi})}} \|Gf\|_H  + \frac{\sqrt{3}}{\beta} \,\sum_{i,j=1}^{dN} \left\| \partial_{x_i}\partial_{x_j} f \right\|_{H_{P}} + \frac{1}{\beta} \, \left\|\nabla_x \Phi  \cdot \nabla_x f\right\|_{H_{P}}
\end{align*}
By the Kato-bound from (C1) (see Equation \eqref{Eq_Kato_bound_E3}) and Relation \eqref{Relation_G_G_psi} Condition (E2) is satisfied with the claimed values for $c_1$ and $c_2$.
\end{proof}

Altogether, we are are able to verify Theorem \ref{Ergodicity_theorem_N_particle_Langevin}.

\begin{proof}[proof of Theorem \ref{Ergodicity_theorem_N_particle_Langevin}] 
Apply Corollary \ref{Cor_main_ergodicity_theorem_with_rate_of_convergence} and therefore use Proposition \ref{Pp_(P)_implies_A_and_S_Langevin} together with Lemma \ref{Lm_Verifying_H1_N_particle_Langevin}, Lemma \ref{Lm_verification_H2_langevin} and Lemma \ref{Lm_Verifying_H3_H4_N_particle_Langevin}. Then ergodicity with rate of convergence follows. The quantitative description of the rate in dependence of $\alpha,\beta \in (0,\infty)$ follows by a straightforward calculation. Indeed, one only has to use the concrete formulas for $\kappa_1$ and $\kappa_2$ from Corollary \ref{Cor_main_ergodicity_theorem_with_rate_of_convergence} and has to plug in the explicit expressions for $c_1, c_2, c_3,\Lambda_m$ and $\Lambda_M$ that are calculated above. Then the constants $A(\Phi)$ and $B(\Phi)$ from the statement are given by
\begin{align} \label{Specification_of_ni}
A(\Phi) = \sqrt{6} \, K_1(\Phi) + \sqrt{2}  \big( K_2(\Phi) +1 \big),\quad B(\Phi) = \sqrt{6} \, K_3(\Phi) + \sqrt{2} \, K_4(\Phi)
\end{align}
where the $K_i(\Phi) \in [0,\infty)$ are the constants occurring in the Kato-bound from \eqref{Eq_Kato_bound_E3}.
The proof is finished.
\end{proof}

\begin{Persp} \label{Outlook_ergodicity}
We have seen that our abstract ergodicity method applies to the $N$-particle Langevin dynamics with singular potentials. It is of interest to establish ergodicity also for the manifold-valued version of the Langevin dynamics. This manifold-valued version of the Langevin equation is derived e.g.~in \cite{GS12} and in \cite{LRS12} (where it is called the constrained Langevin dynamics). The interest for discussing ergodicity of the latter equation arised in \cite{LRS12} in which an ergodic statement for the constrained Langevin dynamics is outlined without convergence rate, see \cite[Prop.~3.2]{LRS12}.
\end{Persp}

Finally, let us already prove Proposition \ref{Lm_sufficient_criteria_implying_P_E_fiber_case} since it also fits to the situation considered here.

\begin{proof}[proof of Proposition \ref{Lm_sufficient_criteria_implying_P_E_fiber_case}]  First, we note that $\nabla_x \Phi \in H_\Phi$  is indeed satisfied due to \cite[Lem.~A24]{Vil09}. Thus Condition (C0) clearly holds. Moreover, our assumptions imply that (C2) is fulfilled, see \cite[Theo.~7]{BKR97} or \cite[Theo.~3.1]{Wie85}. (C3) obviously follows from the Poincar\'e inequality assumed for the potential $\Phi$. For the verification of (C1), apply \cite[Lem.~4.8]{CG10} which explicitly uses the bound $c< \infty$ on the growth behavior for $\nabla_x^2\Phi$.
\end{proof}

\subsection{Proofs to Section \ref{Results_Applications_generalized_fiber_lay_down} (The generalized fiber lay-down dynamics)} \label{subsection_proofs_to_generalized_fiber_lay_down}

Now we follow the definitions and notations for the generalized fiber lay-down dynamics introduced in Section \ref{Results_Applications_generalized_fiber_lay_down} and we always let $d \in \mathbb{N}$, $d \geq 2$. We start recapitulating some properties concerning the fiber lay-down generator proven in \cite{GS12B}. Moreover, we sometimes recall calculations done in \cite{GS12B} for a better understanding. The following theorem is proven in \cite{GS12B}.

\begin{Thm} \label{Thm_basic_properties_fiber_lay_down_generator}
Assume that $\Phi\colon \mathbb{R}^d \rightarrow \mathbb{R}$ satisfies (C0) and let $\sigma > 0$. Define
\begin{align*}
D:=C_c^\infty(\mathbb{M}),\quad H:=L^2(\mathbb{M},\mu_\Phi),\quad \mathbb{M}=\mathbb{R}^d \times \mathbb{S}, \quad \mu_\Phi=e^{-\Phi} \d x \otimes \nu..
\end{align*} 
On the predomain $D$ the generator $L$ associated to the fiber lay-down dynamics (see \eqref{Fiber_Operator_Intro}) is decomposed into $L=S-A$. Here $S$ and $A$ are defined on $D$ via
\begin{align*}
S=\frac{1}{2} \,\sigma^2 \, \Delta_{\mathbb{S}},~A= - \omega \cdot \nabla_x + \text{\normalfont{grad}}_{\mathbb{S}} \Psi \cdot \nabla_\omega~\mbox{ where }~\Psi(x,\omega)= \frac{1}{d-1} \,\nabla_x \Phi(x) \cdot \omega.
\end{align*}
Then the following properties are fulfilled.
\begin{itemize}
\item[(i)] $(S,D)$ is a symmetric and nonpositive definite linear operator on $H$. $(A,D)$ is an antisymmetric linear operator on $H$.
\item[(ii)] The probability measure $\mu_\Phi$ is invariant w.r.t.~$(S,D)$ and $(A,D)$. 
\item[(iii)] It holds $1 \in \overline{D}^L$ and $L1=0$. 
\item[(iv)] $(L,D)$ is essentially m-dissipative on $H$.
\end{itemize}
\end{Thm}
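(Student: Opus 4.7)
I would structure the proof by handling (i)--(iii) through direct integration-by-parts calculations on $\mathbb{R}^d \times \mathbb{S}$, and invoke the reference \cite{GS12B} for the essential m-dissipativity in (iv), which is the genuinely hard analytic step.

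For (i), I would first note that $S = \tfrac{\sigma^2}{2}\Delta_{\mathbb{S}}$ acts only in the velocity variable, so after integrating out $x$ the claim reduces to the well-known symmetry and nonpositivity of $\Delta_{\mathbb{S}}$ on $L^2(\mathbb{S},\nu)$; explicitly, $(Sf,g)_H = -\tfrac{\sigma^2}{2}\int_{\mathbb{M}} \langle \mathrm{grad}_{\mathbb{S}} f, \mathrm{grad}_{\mathbb{S}} g\rangle_{\mathbb{S}}\,\mathrm{d}\mu_\Phi$. For the antisymmetry of $A$, I would split it into the $x$-transport $-\omega\cdot\nabla_x$ and the spherical drift $\mathrm{grad}_{\mathbb{S}}\Psi\cdot\nabla_\omega$ and compute the symmetrization of each separately. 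Integration by parts against the factor $e^{-\Phi}$ produces a cross term $-\int fg\,(\omega\cdot\nabla_x\Phi)\,\mathrm{d}\mu_\Phi$ from the first piece. For the second piece, the key observation is that $\Psi(x,\cdot) = \tfrac{1}{d-1}\nabla_x\Phi(x)\cdot\omega$ is (at fixed $x$) a degree-one spherical harmonic, so $\Delta_{\mathbb{S}}\Psi(x,\omega) = -\omega\cdot\nabla_x\Phi(x)$; integrating by parts on the sphere therefore yields the opposite cross term $+\int fg\,(\omega\cdot\nabla_x\Phi)\,\mathrm{d}\mu_\Phi$, and the two cancel.

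For (ii), the same integration-by-parts computations with $g \equiv 1$ give $\int_{\mathbb{M}} Sf\,\mathrm{d}\mu_\Phi = 0 = \int_{\mathbb{M}} Af\,\mathrm{d}\mu_\Phi$, using $S1 = 0$ and $A1 = 0$ pointwise. For (iii), I would build a cut-off sequence $\chi_n \in C_c^\infty(\mathbb{R}^d)$ with $0\le\chi_n\le 1$, $\chi_n \equiv 1$ on the ball $B_n(0)$, and $|\nabla_x\chi_n|\le C/n$. Viewed as functions on $\mathbb{M}$ that are constant in $\omega$, they lie in $D$, and since $\chi_n$ has no $\omega$-dependence we get $L\chi_n = -\omega\cdot\nabla_x\chi_n$. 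Using the identity $\int_{\mathbb{S}} |\omega\cdot v|^2\,\mathrm{d}\nu(\omega) = |v|^2/d$ one finds $\|L\chi_n\|_H^2 \le \tfrac{1}{d}\int |\nabla_x\chi_n|^2 e^{-\Phi}\,\mathrm{d}x \to 0$, while $\chi_n \to 1$ in $H$ by dominated convergence. Closedness of $L$ then yields $1 \in \overline{D}^L$ with $L1 = 0$.

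The main obstacle is (iv). Dissipativity of $(L,D)$ on $H$ follows at once from (i), since $(Lf,f)_H = (Sf,f)_H \le 0$ for $f \in D$. The nontrivial point is to show that $(\lambda - L)(D)$ is dense in $H$ for some (equivalently, every) $\lambda > 0$, which is obstructed by the fact that $L$ is degenerate (the dissipative part acts only on the $\mathbb{S}$-variable) and that the coefficient $\nabla_x\Phi$ is merely locally Lipschitz and may grow. Rather than redoing this analysis, I would cite the essential m-dissipativity result from \cite{GS12B}, where this is established via a careful resolvent/regularisation argument tailored to the singular weight $e^{-\Phi}\mathrm{d}x\otimes\nu$.
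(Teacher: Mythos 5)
Your proposal is correct and follows the same overall strategy as the paper. The paper itself does not write out any of the calculations: it simply cites \cite[Lem.~3.5, Lem.~3.9]{GS12B} for (i)--(iii), remarking that these are ``just simple calculations,'' and cites \cite[Sec.~4]{GS12B} for the essential m-dissipativity in (iv), which it identifies as the genuinely hard step. You reproduce the content of those cited lemmas: the antisymmetry of $A$ via the cancellation of the two cross terms (using that $\omega\mapsto \nabla_x\Phi(x)\cdot\omega$ is a degree-one spherical harmonic, so $\Delta_{\mathbb{S}}\Psi = -\nabla_x\Phi\cdot\omega$), the invariance of $\mu_\Phi$ by the same integration-by-parts identities, and the cut-off argument $\chi_n \to 1$ with $\|L\chi_n\|_H^2 \le \tfrac{1}{d}\int|\nabla_x\chi_n|^2 e^{-\Phi}\,\mathrm{d}x \to 0$ for (iii). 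One cosmetic slip: with $L = S - A$ and $\chi_n$ independent of $\omega$ you get $L\chi_n = +\,\omega\cdot\nabla_x\chi_n$ rather than $-\,\omega\cdot\nabla_x\chi_n$, but the sign is irrelevant for the norm estimate. Deferring (iv) to \cite{GS12B} is exactly what the paper does, and is the right call since that is where the substantive resolvent analysis lives.
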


For the proof of (i),(ii) and (iii) see \cite[Lem.~3.5]{GS12B} and \cite[Lem.~3.9]{GS12B}. These are just simple calculations. The hardest part is of course to prove (iv), see \cite[Sec.~4]{GS12B}. Moreover, we refer to \cite[Sec.~3]{GS12B} for more details on the notations regarding the definition of $L$ and some basic calculation rules. 

With these properties at hand we can now start proving ergodicity with rate of convergence of the fiber lay-down dynamics. Therefore, we need to verify first the necessary data conditions required for applying the abstract ergodicity method from Section \ref{Results_abstract_ergodicity_method}; basically, the analytic dynamical system assumptions (A) therein are easily be implied (or are already shown) by the statements from Theorem \ref{Thm_basic_properties_fiber_lay_down_generator} above. However, it is left to verify Assumption (S). Therefore, we construct probability laws $\mathbb{P}$, $\hat{\mathbb{P}}$ as required in (A)' and (S)' such that $\mathbb{P}$ is associated with the fiber lay-down semigroup $(T_{t,2})_{t \geq 0}$. The construction of the laws uses an abstract scheme from the theory of generalized Dirichlet forms. However, the arguments below for verifying these abstract conditions are standard. Detailed definitions are not needed in the sequel and are therefore not introduced in the proof below. However, we give precise references to the literature where definitions and details can be found. Thus the interested reader who is not familiar with the theory of generalized Dirichlet\index{Dirichlet forms} forms may skip the arguments in first reading. 

\begin{proof}[proof of Proposition  \ref{Pp_(P)_implies_A_and_S_fiber_lay_down}]
In order to verify (A) and (S), we aim to apply Theorem \ref{Thm_Martingale_Problem_via_regular semigroups} and verify Conditions (A)' and (S)' from Section \ref{Results_abstract_ergodicity_method}. We first prove (A)'. Recall that $D=C_c^\infty(\mathbb{M})$ and $H=L^2(\mathbb{M},\mu_\Phi)$ with $\mu_\Phi=e^{-\Phi} \mathrm{d}x \otimes \nu$. Adopting the notations from (A1)-(A3), the manifold $\mathbb{M}=\mathbb{R}^d \times \mathbb{S}$ plays the role of $E$, $\mu_\Phi$ plays the role of $\mu$ therein and we equip $\mathbb{M}$ with the Euclidean metric induced by $\mathbb{R}^{2d}$. We remark that that we could also endow $\mathbb{M}$ with the metric induced by its Riemannian manifold structure. However, it is well-known that in the latter case $\mathbb{M}$ again becomes a separable metric space whose topology coincides with the relative topology induced by $\mathbb{R}^{2d}$. Hence we can infer that $(C([0,\infty);\mathbb{M}),\mathcal{F}_C)$ with $\mathcal{F}_C=\sigma \{ X_t ~|~t \geq 0\}$ does not depend on one of these metric structures our manifold $\mathbb{M}$ is endowed with. 

By Theorem \ref{Thm_basic_properties_fiber_lay_down_generator} we obtain that Conditions (A1), (A2) (and (A3)) are obviously satisfied. In the following, the closures 
\begin{align*}
(L,\overline{D}^L),\quad (S,\overline{D}^S),\quad (A,\overline{D}^A)
\end{align*}
as introduced after (A) on page \pageref{pageref_Conditions_analytic_A} are denoted by $(L,D(L))$, $(S,D(S))$, $(A,D(A))$ for the rest of this proof. Due to Theorem \ref{Thm_basic_properties_fiber_lay_down_generator}, the closure $(L,D(L))$ of $(L,D)$ in $H$ generates a s.c.c.s in $H$ that we call the fiber lay-down semigroup $(T_{t,2})_{t \geq 0}$.  Recall the identity
\begin{align*}
T_{t,2}f-f = \int_0^t T_{s,2} Lf \, \mathrm{d}s = \int_0^t L T_{s,2}f \, \mathrm{d}s,\quad t \geq 0,\quad f \in D(L).
\end{align*}
Thus conservativity of $(T_{t,2})_{t \geq 0}$ follows from Theorem \ref{Thm_basic_properties_fiber_lay_down_generator}\,(iii) and invariance of $\mu_\Phi$ w.r.t.~$(T_{t,2})_{t \geq 0}$ is satisfied since $\mu_\Phi$ is also invariant for $(L,D(L))$. Furthermore, it is easy to see that $(L,D)$ is an abstract diffusion operator on $L^2(\mathbb{M},\mu_\Phi)$ as defined in \cite[Def.~1.5]{Eb99}. This basically follows since $(L,D)$ is a second order partial differential operator without zero order term. This together with the property that $\mu_\Phi$ is invariant for $(L,D)$ implies that $(T_{t,2})_{t \geq 0}$ is indeed sub-Markovian, see \cite[Lem.~1.9]{Eb99}. The fact that the generator of the adjoint semigroup $(\hat{T_{t,2}})_{t \geq 0}$ extends $(\hat{L},D)$ is obvious. Altogether, the conditions from (A)' are shown and one obtains the desired semigroup $(T_{t,2})_{t \geq 0}$. But before proving (S)' we need some more preliminary considerations that will be used at the end of the proof. Therefore, consider the mapping
\begin{align*}
U\colon L^2(\mathbb{M},\mu_\Phi) \to  L^2(\mathbb{M},\mu_\Phi)
\end{align*}
defined by $Uf(x,\omega):=f(x,-\omega)$ for $(x,\omega) \in \mathbb{M}$. Clearly, $U$ is an unitary isomorphism mapping the constant functions to itself and $(L,D)$ is transformed under $U$ to $(\widehat{L},D)$ where $\hat{L}=S+A$ on $D$. Hence $(\hat{L},D)$ is again essentially m-dissipative on $\H$ and the constant functions are again elements of the kernel of its closure. Thus by using the same arguments as at the beginning of the proof it follows that the closure of $(\hat{L},D)$ in $H$ generates a conservative $\mu_\Phi$-invariant sub-Markovian s.c.c.s.~$(S_t)_{t \geq 0}$ on $H$. However, $(S_t)_{t \geq 0}$ coincides with the dual semigroup of $(T_{t,2})_{t \geq 0}$ in $H$. This follows since one easily sees that the generator of both semigroups is given by the closure of $(\hat{L},D)$ in $H$ by using the well-known fact that m-dissipative operators do not posses proper dissipative extensions.

Next we check (S)' and construct the desired laws $\mathbb{P}$ and $\hat{\mathbb{P}}$. We start with the construction of $\mathbb{P}$. We proceed similar as in the proof of \cite[Theo.~3]{CG08}, \cite[Theo.~2.5]{CG10} (or \cite[Theo.~6.3.2]{Con11} equivalently) and \cite[Cor.~2.7]{CG10} by applying the theory of generalized Dirichlet forms. Consider \cite{St99}, \cite{Tru03} and \cite{Tru05} (or \cite[Sec.~2.2]{Con11}) for basic notations used in the rest of this proof. The conditions that we need to verify below are obtained from several references and are summarized in \cite[Sec.~2.2]{Con11} in a nice overview. First of all, \cite[Lem.~1.9]{Eb99} implies that $(L,D)$ is even a m-dissipative Dirichlet operator. For the definition of a Dirichlet operator see e.g.~\cite[(1.18)]{Eb99}. Thus \cite[Prop.~I.4.7]{St99} (or consider \cite[Lem.~3.4]{Con05}) implies that $(L,D(L))$ generates a generalized Dirichlet form on $H$\index{Dirichlet forms}. And since $D=C_c^\infty(\mathbb{M})$ is a core for $(L,D(L))$ which is an algebra consisting of continuous functions and separating the points of $\mathbb{M}$, one easily verifies that this Dirichlet form is quasi-regular (see \cite[Def.~IV.1.7]{St99}) and satisfies Condition D3 from \cite[Ch.~IV.2]{St99} (or from \cite{Tru05} equivalently) by using \cite[Prop.~IV.2.1]{St99}. Thus the existence of a special standard process 
\begin{align*}
\mathbf{M}=\left( \Omega, \mathcal{M}, (\mathcal{M}_t)_{t \geq 0},  (x_t,\omega_t)_{t \geq 0}, \mathbb{P}_{(x,\omega) \in \mathbb{M}_\Delta} \right)
\end{align*} 
associated with $(T_{t,2})_{t \geq 0}$ follows from \cite[Theo.~IV.4.2]{St99}; the definition of associatedness in this situation is given below. However, since $(T_{t,2})_{t \geq 0}$ is conservative, we may assume that $\mathbf{M}$ has infinite lifetime $\mathbb{P}_{(x,\omega)}$-a.s.~for any initial point $(x,\omega) \in \mathbb{M}$, see the proof of \cite[Theo.~2.5]{CG10}. Thus in fact we do not need to join the cemetry $\Delta$ to $\mathbb{M}$ and may assume that the state space of $\mathbf{M}$ is just $\mathbb{M}$.\index{Hunt process} We further mention that $\mathbf{M}$ is even a Hunt process (cf.~\cite[Sec.~2.2]{Con11}). Moreover, since $(L,D)$ is a linear  partial differential operator of second order without zero order terms, it follows automatically that $\mathbf{M}$ has continuous sample paths $\mathbb{P}_{(x,\omega)}$-a.s.~for quasi-every $(x,\omega) \in \mathbb{M}$. The last property follows by \cite[Theo.~3.3]{Tru03} or \cite[Theo.~3.58]{Con05}.\index{Dirichlet forms}

Let us further recall that associatedness of the special standard process\index{associatedness!with a right process} $\mathbf{M}$ with $(T_{t,2})_{t \geq 0}$ here means that for each $t > 0$ and each $f \in L^\infty(\mathbb{M},\mu_\Phi)$ with bounded $\mu_\Phi$-version $\hat{f} \colon \mathbb{M} \to \mathbb{R}$ it holds that $p_t \hat{f}$ is $\mu_\Phi$-version of $T_{t,2}f$, see e.g.~\cite[Lem.2.2.8]{Con11}. Here $p_t \hat{f}(x,\omega)=\mathbb{E}_{(x,\omega)}[\hat{f}(x_t,\omega_t)]$, $(x,\omega) \in \mathbb{M}$, is the transition kernel of $\mathbf{M}$ and $\mathbb{E}_{(x,\omega)}$ denotes expectation w.r.t.~$\mathbb{P}_{(x,\omega)}$.

Now we follow the construction scheme \cite[Rem~2.2.9]{Con11}. First of all, one can always define the probability measure $\mathbb{P}\df \mathbb{P}_{\mu_\Phi}$ on $(\Omega,\mathcal{M})$ via
\begin{align*}
\mathbb{P}_{\mu_\phi} = \int_\mathbb{M} \mathbb{P}_{(x,\omega)} \, \mathrm{d}\mu_\Phi (x,\omega).
\end{align*}
where measurability of the integrand is ensured by the defining properties of a special standard process $\mathbf{M}$. Thus 
\begin{align*} 
\mathbb{P} \left(\big\{ \nu \in \Omega~\big|~ t \mapsto (x_t(\nu),\omega_t(\nu)) \in \mathbb{M} \mbox{~~is continuous on }[0,\infty) \big\}\right)=1.
\end{align*}
So, one can easily construct a $\mathcal{M} / \mathcal{F}_C$-measurable mapping
\begin{align*}
\iota \colon \Omega \to C([0,\infty);\mathbb{M})
\end{align*}
by using that $\mathcal{F}_C$ coincides with $\sigma \{ (x_t,\omega_t)~|~t \geq 0 \}$; here and in the following the evaluation of paths at time $t$ is also denoted by $(x_t,\omega_t)$ instead of $X_t$. The image measure of $\mathbb{P}$ under the mapping $\iota$ is denoted with the same symbol. Then $\mathbb{P}$ defines our desired probability law associated with $(T_{t,2})_{t \geq 0}$ as required in (A)' where associatedness is now understood as in Section \ref{Results_abstract_ergodicity_method}. Indeed, for all bounded nonnegative $f_1, \ldots, f_n \colon \mathbb{M} \to \mathbb{R}$, $n \in \mathbb{N}$, and all $0 \leq t_1 \leq \ldots \leq t_n < \infty$ the Markov property implies
\begin{align*}
\mathbb{E}_{(\cdot)}\left[ f_1(x_{t_1},\omega_{t_1}) \cdots f_n(x_{t_n},\omega_{t_n})\right] = p_{t_1} (f_1 p_{t_2-t_1}(f_2 \cdots p_{t_n - t_{n-1}}f_n) \cdots)
\end{align*}
and the right hand side is a $\mu_\Phi$-version of $T_{t_1,2} (f_1 T_{t_2-t_1,2}(f_2 \cdots T_{t_n - t_{n-1},2}f_n)$; for more details on this construction scheme we refer to \cite[Rem.~2.2.9]{Con11}. Finally, exactly the same arguments and construction scheme applies to $(\hat{L},D(\hat{L}))$ (the closure of $(\hat{L},D)$ in $H$). Hence there exists a probability law $\hat{\mathbb{P}}$ on $C([0,\infty);\mathbb{M}),\mathcal{F}_C)$ which is associated with $(S_t)_{t \geq 0} = (\hat{T}_{t,2})_{t \geq 0}$. In particular, $\hat{\mathbb{P}}_T$ is associated with $(\hat{T}_{t,2})_{t \in [0,T]}$ for each $T \geq 0$. However, also $\mathbb{P}_T \circ \tau_T^{-1}$ is associated with $(\hat{T}_{t,2})_{t \in [0,T]}$ by Lemma \ref {Lm_properties_semigroup_related_law} where $\tau_T$ is the time-reversal on $C([0,T];\mathbb{M})$. Consequently, by uniqueness (cf.~Remark \ref{Rm_associatedness}\,(iii)) we conclude 
\begin{align*}
\hat{\mathbb{P}}_T=\mathbb{P}_T \circ \tau_T^{-1}\quad \mbox{on $(C([0,T];\mathbb{M}),\mathcal{F}_C)$ for each $T \geq 0$}.
\end{align*}
Summarizing, also Condition (S)' is verified and the claim follows by Theorem \ref{Thm_Martingale_Problem_via_regular semigroups}.
\end{proof}

\begin{Rm} \label{Rm_martingale_problem_fiber_semigroup}\index{martingale problem}
By Proposition \ref{Pp_computation_quadratic_variation} it follows that the coordinate process $(x_t,\omega_t)_{t \geq 0}$ provides a martingale solution for the operator $(L,D)$ under the previously constructed law $\mathbb{P}=\mathbb{P}_{\mu_\Phi}$. Furthermore, under $\mathbb{P}$ the process $(x_t,\omega_t)_{t \geq 0}$ has initial distribution $\mu_\Phi$ and $\mu_\Phi$ as invariant measure and $\mathbb{P}$ is uniquely determined via associatedness with $(T_{t,2})_{t \geq 0}$. We mention that one can even prove pointwise statements. We do not need such pointwise statements in the sequel, however, we shall mention what else can be shown. Indeed, in the proof of Proposition \ref{Pp_(P)_implies_A_and_S_fiber_lay_down} we constructed a diffusion process
\begin{align*}
\mathbf{M}=\left( \Omega, \mathcal{M}, (\mathcal{M}_t)_{t \geq 0},  (x_t,\omega_t)_{t \geq 0}, \mathbb{P}_{(x,\omega) \in \mathbb{M}} \right)
\end{align*} 
whose transition kernel\index{transition semigroup/kernel} coincides with the semigroup $(T_{t,2})_{t \geq 0} $ $\mu_\Phi$-a.e.~on $\mathbb{M}$. And moreover, $\mathbf{M}$ even solves the martingale problem for the operator $(L,C_c^2(\mathbb{M}))$ under $\mathbb{P}_{(x,\omega)}$ for quasi-any starting point $(x,\omega) \in \mathbb{M}$. This can be shown as for the Langevin dynamics, see \cite[Theo.~3, Cor.~1]{CG08} and \cite[Theo.~5]{CG10} for details and notations. Following \cite{CG08}, even the construction of a weak solution to the underlying stochastic differential equation \eqref{Fiber_Model_Intro} with the help of this functional analytic approach based on the theory of Dirichlet forms seems to be possible. 

Summarizing, this remark show that we can connect the semigroup $(T_{t,2})_{t \geq 0}$ on the Hilbert space $L^2(\mathbb{M},\mu_\Phi)$ with a diffusion process which solves (in a suitable sense) our underlying stochastic fiber lay-down equation. Hence all our analytic considerations are indeed natural and one sees once more the strength of the theory of Dirichlet forms.
\end{Rm}

Now again only assume that $\Phi \colon \mathbb{R}^d \to \mathbb{R}$ satisfies Condition (C0) from Section \ref{Results_Applications_generalized_fiber_lay_down}. We define $P\colon H \rightarrow H$ in the fiber lay-down case as
\begin{align*}
Pf:=\int_{\mathbb{S}} f \, \mathrm{d}\nu,\quad f \in H.
\end{align*}
Then $P$ is really an orthogonal projection and the following statement holds.

\begin{Pp} \label{Lm_Verifying_H1_fiber_lay_down}
Let $d \in \mathbb{N}$, $d \geq 2$ and $\sigma \in (0,\infty)$. Assume that the potential $\Phi\colon \mathbb{R}^{d} \to \mathbb{R} $ satisfies (C0). The operator $(S,D)$ is a nonpositive definite, essentially self-adjoint operator in $H$. For the kernel $\mathcal{N}(S)$ of its closure $(S,\overline{D}^S)$ we have
\begin{align*}
\mathcal{N}(S)=\mathcal{R}(P)=L^2(\mathbb{R}^{d},e^{-\Phi}\mathrm{d}x)
\end{align*}
In other words, $P$ is really the orthogonal projection onto $\mathcal{N}(S)$. Moreover,
\begin{align} \label{Eq_microscopic_inequality_fiber_lay_down}
-\left(Sf,f\right)_H \geq \frac{1}{2} \sigma^2 (d-1) \,\| f- Pf \|^2_{H},\quad f \in D.
\end{align}
In particular, Condition (E1) holds with $\Lambda_m=\frac{1}{2} \sigma^2 (d-1)$.
\end{Pp}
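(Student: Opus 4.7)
The plan is to mirror the argument of Lemma \ref{Lm_Verifying_H1_N_particle_Langevin}. Nonpositive definiteness of $(S,D)$ is already recorded in Theorem \ref{Thm_basic_properties_fiber_lay_down_generator}(i), so the substantive content splits into (a) essential self-adjointness of $(S,D)$ on $H$, (b) identification $\mathcal{N}(S)=L^2(\mathbb{R}^d,e^{-\Phi}\mathrm{d}x)$ (canonically embedded into $H$), and (c) the microscopic coercivity inequality with constant $\tfrac{1}{2}\sigma^2(d-1)$.

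For (a) I would exploit the tensor product structure $H \cong L^2(\mathbb{R}^d, e^{-\Phi}\mathrm{d}x)\otimes L^2(\mathbb{S},\nu)$ together with the fact that $S=\tfrac{1}{2}\sigma^2\Delta_{\mathbb{S}}$ acts only in the spherical variable. The Laplace--Beltrami operator on the compact manifold $\mathbb{S}=\mathbb{S}^{d-1}$ with predomain $C^\infty(\mathbb{S})$ is essentially self-adjoint in $L^2(\mathbb{S},\nu)$ by classical elliptic theory on compact Riemannian manifolds (its closure is even self-adjoint with pure point spectrum given by the spherical harmonics). Invoking a tensor product criterion such as \cite[Theo.~VIII.33]{RS80}, the operator $(S, C_c^\infty(\mathbb{R}^d)\odot C^\infty(\mathbb{S}))$ is essentially self-adjoint in $H$; since the algebraic tensor product is contained in $D=C_c^\infty(\mathbb{M})$ and self-adjoint operators admit no proper symmetric extensions, $(S,D)$ itself is essentially self-adjoint.

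For (b) and (c) I would then invoke the spectral data of $-\Delta_{\mathbb{S}}$ on $\mathbb{S}^{d-1}$: its eigenvalues are $k(k+d-2)$, $k\in\mathbb{N}_{0}$, with the eigenvalue $0$ simple and corresponding to the constants, so the spectral gap equals $d-1$. This simultaneously gives the kernel statement (if $Sf=0$ for $f\in\overline{D}^{S}$, Fubini forces $f(x,\cdot)$ to be constant on $\mathbb{S}$ for $e^{-\Phi}\mathrm{d}x$-a.e.~$x$, hence $f=Pf$; the converse inclusion follows by approximating elements of $L^2(\mathbb{R}^d,e^{-\Phi}\mathrm{d}x)$ by elements of $C_c^\infty(\widetilde{\mathbb{R}^d})$ tensored with the constant function $1$) and the standard spherical Poincar\'e inequality
\begin{align*}
-\int_{\mathbb{S}} \Delta_{\mathbb{S}} g \cdot g\,\mathrm{d}\nu \;=\; \int_{\mathbb{S}} |\text{grad}_{\mathbb{S}} g|^{2}\,\mathrm{d}\nu \;\geq\; (d-1)\left\|g-\int_{\mathbb{S}} g\,\mathrm{d}\nu\right\|_{L^2(\nu)}^{2}
\end{align*}
valid for $g\in C^\infty(\mathbb{S})$.

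Applying this inequality pointwise in $x$ and integrating against $e^{-\Phi(x)}\mathrm{d}x$, Fubini then yields, for $f\in D$,
\begin{align*}
-(Sf,f)_{H} \;=\; \tfrac{1}{2}\sigma^{2}\!\int_{\mathbb{R}^{d}}\!\int_{\mathbb{S}} |\text{grad}_{\mathbb{S}} f(x,\omega)|^{2}\,\mathrm{d}\nu(\omega)\,e^{-\Phi(x)}\mathrm{d}x \;\geq\; \tfrac{1}{2}\sigma^{2}(d-1)\|f-Pf\|_{H}^{2},
\end{align*}
which is \eqref{Eq_microscopic_inequality_fiber_lay_down} and provides $\Lambda_{m}=\tfrac{1}{2}\sigma^{2}(d-1)$ as claimed. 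The main obstacle is essentially bookkeeping in step (a): $D=C_c^\infty(\mathbb{M})$ is not literally a tensor product, but density of algebraic tensor products in $C_c^\infty(\mathbb{M})$ combined with the no-proper-symmetric-extension principle for self-adjoint operators handles this exactly as in the Langevin case of Lemma \ref{Lm_Verifying_H1_N_particle_Langevin}; no new difficulty arises from the spherical geometry.
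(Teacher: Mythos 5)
Your proposal is correct and follows essentially the same route as the paper's proof: essential self-adjointness of $(\Delta_{\mathbb{S}},C^\infty(\mathbb{S}))$ on $L^2(\mathbb{S},\nu)$ lifted to $H$ via the tensor-product criterion \cite[Theo.~VIII.33]{RS80} and the no-proper-symmetric-extension principle, the spherical Poincar\'e inequality with constant $d-1$ integrated against $e^{-\Phi}\mathrm{d}x$ to obtain \eqref{Eq_microscopic_inequality_fiber_lay_down}, and the kernel identification via the coercivity estimate in one direction and approximation in the other. The paper's wording for the inclusion $\mathcal{N}(S)\subset\mathcal{R}(P)$ is marginally cleaner — it simply applies the already-extended coercivity inequality to $f\in\overline{D}^S$ with $Sf=0$ to get $\|f-Pf\|=0$, bypassing any pointwise-in-$x$ Fubini considerations — but this is a cosmetic difference, not a gap.
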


\begin{proof}
The argument is similar as the one used for proving Proposition \ref{Lm_Verifying_H1_N_particle_Langevin}. First it is well known that $(\Delta_{\mathbb{S}},C^\infty(\mathbb{S}))$ is essentially self-adjoint in $L^2(\mathbb{S},\nu)$, see e.g.~\cite{Tri72}. Thus essential self-adjointness of $(S,D)$ in $H$ follows by \cite[Theo.~VIII.33]{RS80} (or use elementary arguments). As already argued in \cite[Prop.~3.12]{GS12B} the Poincaré inequality on $\mathbb{S}$ directly implies Inequality \eqref{Eq_microscopic_inequality_fiber_lay_down}. Then clearly \eqref{Eq_microscopic_inequality_fiber_lay_down} carries over to each $f \in \overline{D}^S$. Hence for some $f \in \overline{D}^S$ with $Sf=0$ it follows that $f=Pf$. Vice versa, standard approximation shows that $L^2(\mathbb{R}^{d},e^{-\Phi}\mathrm{d}x) \subset \overline{D}^S$ and $Sf=0$ for each $f \in L^2(\mathbb{R}^{d},e^{-\Phi}\mathrm{d}x)$, see also \cite[Lem.~3.8]{GS12B}.
\end{proof}

We further mention that the technical condition required in \eqref{Technical_Condition_Ergodicity} is obviously satisfied since 
\begin{align*}
P(D)=C_c^\infty(\mathbb{R}^{d}) \subset C_c^\infty(\mathbb{M})=D.
\end{align*}
This yields the formula
\begin{align*}
Af=-\omega \cdot \nabla_x f\quad \mbox{for all } f \in C_c^\infty(\mathbb{R}^{d}).
\end{align*}
Hence for each such $f$ we have $Af \in D$ and \eqref{Technical_Condition_Ergodicity} is fulfilled. Next, we verify first the algebraic relation required in (E4).

\begin{Pp} \label{Lm_verification_E3_fiber_lay_down}
Let $d \in \mathbb{N}$, $d \geq 2$. Assume that $\Phi$ satisfies (C0) and let $\sigma \in (0,\infty)$. Then Condition (E4) is fulfilled with $c_3=\frac{1}{2} \sigma^2 (d-1)$.
\end{Pp}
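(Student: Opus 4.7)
The plan is to verify the sufficient form of (E4) given at the end of its statement, namely $SAP(D)\subset \overline{D}^A$ together with the algebraic identity $PA\,SAP=-c_3\,PA^2P$ on $D$. The key computational observation is that $APf$ is linear in the velocity variable $\omega$, so the spherical Laplacian acts on it by a scalar.

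First I would compute $APf$ explicitly for $f\in D_P=C_c^\infty(\mathbb{R}^d)$. Since such $f$ depends only on $x$, all $\nabla_\omega$-terms in $A$ vanish and
\begin{align*}
APf(x,\omega)=-\omega\cdot\nabla_x f(x).
\end{align*}
In particular $APf\in C_c^\infty(\mathbb{M})=D\subset\overline{D}^A$, which disposes of the domain requirement. Next, fixing $x$, the map $\mathbb{S}\ni \omega\mapsto -\omega\cdot\nabla_x f(x)$ is the restriction of a linear form on $\mathbb{R}^d$, i.e.\ a spherical harmonic of degree $1$. By the classical eigenvalue formula for the Laplace-Beltrami operator on $\mathbb{S}^{d-1}$ (eigenvalue $-\ell(\ell+d-2)$ on spherical harmonics of degree $\ell$), the $\ell=1$ eigenvalue equals $-(d-1)$, hence
\begin{align*}
\Delta_{\mathbb{S}}\bigl(-\omega\cdot\nabla_x f(x)\bigr)=-(d-1)\bigl(-\omega\cdot\nabla_x f(x)\bigr).
\end{align*}
Consequently $SAPf=\tfrac12\sigma^2\Delta_{\mathbb{S}}(APf)=-\tfrac12\sigma^2(d-1)\,APf$, which is the key pointwise identity.

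From this proportionality I would read off both conditions. Applying $PA$ to both sides gives
\begin{align*}
PA\,SAP f=-\tfrac12\sigma^2(d-1)\,PA^2Pf \quad\text{for all }f\in D,
\end{align*}
so the sufficient algebraic relation from (E4) holds with $c_3=\tfrac12\sigma^2(d-1)$. Equivalently, taking the $H$-inner product of $SAPf=-\tfrac12\sigma^2(d-1)APf$ with $APf$ and invoking identity \eqref{Eq_cauchy_APf}, namely $(Gf,f)_H=-\|APf\|^2$, I obtain
\begin{align*}
(SAPf,APf)_H=-\tfrac12\sigma^2(d-1)\,\|APf\|^2=\tfrac12\sigma^2(d-1)\,(Gf,f)_H,
\end{align*}
which is precisely the defining form of (E4) with the claimed constant.

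There is essentially no obstacle here beyond recalling the correct normalisation of the first non-trivial eigenvalue of $\Delta_{\mathbb{S}}$; the linearity of $APf$ in $\omega$ collapses the computation to a one-line eigenvalue statement, and no singular analysis of $\Phi$ enters because all $\Phi$-dependence in $A$ is killed by $\nabla_\omega$ acting on a function of $x$ only.
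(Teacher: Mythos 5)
Your proof is correct and follows essentially the same route as the paper's: compute $APf=-\omega\cdot\nabla_x f$ for $f\in D_P$, invoke $\Delta_{\mathbb{S}}\omega=-(d-1)\omega$ (you derive it from the degree-$1$ spherical-harmonic eigenvalue; the paper cites a lemma from \cite{GKMS12}), conclude $SAPf=-\tfrac12\sigma^2(d-1)APf$, and read off (E4). Your additional check that $SAP(D)\subset\overline{D}^A$ and the direct verification of the inner-product form are harmless elaborations of the same computation.
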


\begin{proof}
As seen before, we have
\begin{align*}
Af_S=-\omega \cdot \nabla_x f_S, \quad P f=f_S \in C_c^\infty(\mathbb{R}^{d})\quad \mbox{for all } f \in D.
\end{align*}
Furthermore, recall the identity $\Delta_\mathbb{S} \,\omega =- (d-1) \,\omega$ by \cite[Lem.~7.1]{GKMS12}. Thus we can infer 
\begin{align} \label{Eq_verification_E3_fiber_lay_down}
SAf= \frac{1}{2} \,(d-1)\, \sigma^2 \,\omega \cdot \nabla_x f =-\frac{1}{2} (d-1) \sigma^2 \, Af\quad \mbox{for each } f \in C_c^\infty(\mathbb{R}^{d}).
\end{align}
Hence (E4) is fulfilled with the claimed value for $c_3$.
\end{proof}

Now we verify (E2) and (E3). Therefore, let first $\Phi$ be as in the preceding proposition recall the definition of $G_\Phi$ and $H_\Phi$ from the introduction. We further note that Conditions (C0)-(C3) for proving ergodicity of the fiber lay-down dynamics are similar to the conditions for proving ergodicity of the $N$-particle Langevin dynamics. This is simply due to the fact that both dynamics admit the same macroscopic evolution operator $PA^2P$, see below.  But first we need some formulas that are already computed in \cite{GKMS12} and \cite{GS12B}.  We recall that
\begin{align} \label{formula_AAP}
A^2Pf = - A \left( \omega \cdot \nabla_x f_S\right)=\left(\omega, \nabla^2 f_S \,\omega\right)_{\text{euc}} - \frac{1}{d-1} \left( (I-\omega \otimes \omega) \nabla \Phi, \nabla f_S\right)_{\text{euc}}.
\end{align}
Above $f \in D$ and again $f_S:=Pf \in C_c^\infty(\mathbb{R}^d)$. By using the Gaussian integral formula this implies
\begin{align*}
Gf=PA^2Pf= \frac{1}{d} \left( \Delta_x - \nabla_x \Phi \cdot \nabla_x \right) f_S.
\end{align*}
So, by using the notations from Section \ref{Results_abstract_ergodicity_method} we have
\begin{align*}
D_{P}=P(D)=C_c^\infty(\mathbb{R}^{d}),\quad H_{P}=P(H)=L^2(\mathbb{R}^{d},e^{-\Phi}\mathrm{d}x)=H_{\Phi}
\end{align*}
as well as
\begin{align} \label{Relation_G_G_psi_fiber_case}
G= \frac{1}{d} \left( \Delta_x - \nabla_x \Phi  \cdot \nabla_x \right)= \frac{1}{d} \, G_{\Phi} \quad \mbox{ on } D_{P}.
\end{align}
Thus $G$ indeed looks like the operator from the $N$-particle Langevin dynamics and consequently, (E2) and (E3) can similarly be verified in the upcoming proposition. But first note that (C1) implies that there exists constants $K_1(d,\Phi),K_2(d,\Phi) \in [0,\infty)$  independent of $f \in C_c^\infty(\mathbb{R}^{d})$ and only depending on the choice of $\Phi$ (and on $d$) such that
\begin{align} \label{Eq_Kato_bound_E3_fiber}
\sum_{i,j=1}^{d} \left\| \partial_{x_i} \partial_{x_j} f \right\|_{H_\Phi} + \frac{1}{d-1} \, \sum_{i,j =1}^{d} \left\| \left(\partial_{x_i}\Phi \right) \partial_{x_j}f \right\| \leq K_1(d,\Phi) \left\| G_\Phi f \right\|_{H_\Phi} + K_2(d,\Phi) \,\left\| f \right\|_{H_\Phi}
\end{align}

The desired proposition reads as follows.

\begin{Pp} \label{Lm_Verifying_H3_H4_fiber_lay_down}
Assume that $\Phi$ satisfies (C0), (C1), (C2) and (C3). Then also (E2) and (E3) are fulfilled. Moreover, the constants $c_1$ and $c_2$ from (E2) are of the form
\begin{align*}
c_1= \frac{\sqrt{d}\,(d-1)}{2\, \sqrt{\text{\normalfont{gap}}(G_{\Phi})}} \,\sigma^2 + d \, K_1(d,\Phi),\quad c_2= K_2(d,\Phi).
\end{align*}
Here the constant $K_1(d,\Phi),K_2(d,\Phi) \in [0,\infty)$ are obtained by the Kato-bound from (C1) in \eqref{Eq_Kato_bound_E3_fiber} and are only depending on the choice of $\Phi$ (and on the dimension $d$). Moreover,
\begin{align*}
\Lambda_M = \frac{\text{\normalfont{gap}}(G_{\Phi})}{d}.
\end{align*}
\end{Pp}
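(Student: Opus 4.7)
The plan is to reduce both (E2) and (E3) to the corresponding spectral data of the macroscopic operator $G_\Phi$ on $H_\Phi$ via the identity $G = \tfrac{1}{d} G_\Phi$ on $D_P = C_c^\infty(\mathbb{R}^d)$ established in~\eqref{Relation_G_G_psi_fiber_case}, and then to estimate $LAPf$ term by term using the explicit formulas for $A^2Pf$ and $SAPf$ already computed above.

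First I would handle (E3). Essential selfadjointness of $(G,D_P)$ on $H_P = H_\Phi$ is immediate from (C2), since $G$ and $G_\Phi$ differ only by a positive factor on the same dense domain $C_c^\infty(\mathbb{R}^d)$. Obviously $1 \in H_P$ because $e^{-\Phi}\mathrm{d}x$ is a probability measure; integration by parts gives $(1, Gf)_{H_P}=0$ for all $f \in D_P$, and selfadjointness of $(G,\overline{D_P}^G)$ then yields $1 \in \overline{D_P}^G$ with $G1 = 0$ (exactly as in the Langevin argument in Lemma~\ref{Lm_Verifying_H3_H4_N_particle_Langevin}). Finally, the spectral gap assumption (C3) for $G_\Phi$ transfers via $G=G_\Phi/d$ to the macroscopic coercivity inequality with $\Lambda_M = \mathrm{gap}(G_\Phi)/d$, as claimed.

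For (E2) I would write $LAPf = SAPf - A(APf) = SAPf - A^2Pf$ for $f \in D_P$ and plug in \eqref{formula_AAP} together with \eqref{Eq_verification_E3_fiber_lay_down}. This splits $LAPf$ into three pieces: the transport-type term $\tfrac{1}{2}\sigma^2(d-1)\,\omega\!\cdot\!\nabla_x f_S$, the Hessian term $(\omega,\nabla^2 f_S\,\omega)_{\text{euc}}$, and the gradient--potential term $\tfrac{1}{d-1}((I-\omega\otimes\omega)\nabla\Phi,\nabla f_S)_{\text{euc}}$. The first piece is controlled directly by $\|Gf\|$: using $\int_{\mathbb{S}}\omega_i\omega_j\,\mathrm{d}\nu = \tfrac{1}{d}\delta_{ij}$ one gets $\|\omega\!\cdot\!\nabla_x f_S\|_H^2 = \tfrac{1}{d}\|\nabla f_S\|_{H_\Phi}^2 = -(Gf,f)_{H_P}$, and then the macroscopic coercivity inequality together with $(Gf,1)_{H_P}=0$ yields $\|\omega\!\cdot\!\nabla_x f_S\|_H \le \sqrt{d/\mathrm{gap}(G_\Phi)}\,\|Gf\|_H$, producing the $\sigma^2$--dependent part of $c_1$. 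For the remaining two pieces I would apply Minkowski's inequality in the spherical integral, using the pointwise bounds $|\omega_i\omega_j|\le 1$ and $|\delta_{ij}-\omega_i\omega_j|\le 2$, to reduce everything to $\sum_{i,j}\|\partial_{x_i}\partial_{x_j}f_S\|_{H_\Phi}$ and $\sum_{i,j}\|(\partial_{x_i}\Phi)\,\partial_{x_j}f_S\|_{H_\Phi}$, and then invoke the Kato bound \eqref{Eq_Kato_bound_E3_fiber} furnished by (C1). This produces the constants $K_1(d,\Phi)$ and $K_2(d,\Phi)$ in the stated form of $c_1$ and $c_2$.

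I do not expect a real obstacle here; the conceptual work has already been done in Proposition~\ref{Lm_Verifying_H3_H4_N_particle_Langevin} for the Langevin case, and the only genuinely new ingredient is the explicit spherical moment $\int_{\mathbb{S}}\omega_i\omega_j\,\mathrm{d}\nu = \tfrac{1}{d}\delta_{ij}$ that ties $\|\omega\!\cdot\!\nabla_x f_S\|_H$ to $\|\nabla f_S\|_{H_\Phi}$. The only point requiring care is the bookkeeping of the numerical constants so that one ends up with precisely the $c_1$, $c_2$, $\Lambda_M$ displayed in the statement; this is routine but must be tracked attentively because the estimates for $c_1$ and $\Lambda_M$ both exploit the spectral gap of $G_\Phi$ and carry factors of $d$ and $d-1$ coming from the spherical geometry.
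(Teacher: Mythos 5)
Your proposal follows exactly the same route as the paper: (E3) is read off from $G=\tfrac{1}{d}G_\Phi$ and (C2), (C3) exactly as in the Langevin case, and (E2) is obtained from the three-term decomposition of $LAPf$, with the transport piece controlled by the spectral gap via $\|\omega\cdot\nabla_x f_S\|_H^2=-\left(Gf,f\right)_{H_P}$ and the remaining two pieces reduced to the Kato bound \eqref{Eq_Kato_bound_E3_fiber}. The one inaccuracy is the pointwise estimate $|\delta_{ij}-\omega_i\omega_j|\le 2$. On $\mathbb{S}$ one in fact has $1-\omega_i^2\in[0,1]$ for $i=j$ (since $\omega_i^2\le 1$) and $|\omega_i\omega_j|\le\tfrac12(\omega_i^2+\omega_j^2)\le\tfrac12$ for $i\neq j$, so the sharp bound is $|\delta_{ij}-\omega_i\omega_j|\le 1$. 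This matters because the Kato bound \eqref{Eq_Kato_bound_E3_fiber} carries the coefficient $\tfrac{1}{d-1}$ on the potential--gradient sum; with your slack factor $2$ you would instead have to absorb $\tfrac{2}{d-1}\sum_{i,j}\|(\partial_{x_i}\Phi)\,\partial_{x_j}f_S\|_{H_\Phi}$, and the resulting constants become $c_1=\tfrac{\sqrt{d}\,(d-1)}{2\sqrt{\text{gap}(G_\Phi)}}\sigma^2+2d\,K_1(d,\Phi)$ and $c_2=2\,K_2(d,\Phi)$, not the ones displayed in the statement. Replacing the bound by $\le 1$ gives precisely the coefficients claimed, which is what the paper uses.
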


\begin{proof}
We verify (E3). First of all, Conditions (C2) and (C3) together easily imply (E3) as in the proof of Lemma \ref{Lm_Verifying_H3_H4_N_particle_Langevin}. For convenience we recall the argument here. Following the notation in (E3) we have that $(G,D(G))\df (G,\overline{D_P}^G)$ denotes the closure of $(G,D_P)$ in $H_P$ where $P$ is given by $P=P$ in the ergodicity framework. Essential selfadjointness of $(G,D_P)$ in $H_{P}$ is implied by Condition (C2) by using the relation of $G$ with $G_\Phi$ from \eqref{Relation_G_G_psi_fiber_case}. As in \cite[Lem.~4.7]{CG10} it follows  $1\in D(G)$ and $G1=0$. Indeed, integration by parts implies $\left(1,Gf\right)_{H_P}=0$ for each $f \in D_P$. Thus $1 \in D(G)$ and $G1=0$ since $(G,D(G))$ is selfadjoint on $H_P$. By Relation \eqref{Relation_G_G_psi_fiber_case}, Condition (C3) yields the macroscopic coercivity inequality required in (E3) and the formula for $\Lambda_M$. We verify (E2). Let $f \in C_c^\infty(\mathbb{R}^d)=D_{P}$. Then by \eqref{formula_AAP} and \eqref{Eq_verification_E3_fiber_lay_down} we obtain
\begin{align*}
LAP f = \,&\frac{1}{2} \,(d-1) \,\sigma^2\,\omega \cdot \nabla_x f - \sum_{i,j=1}^{d} \omega_i \, \omega_j  \,\partial_{x_i}\partial_{x_j} f  + \frac{1}{d-1} \sum_{i,j=1}^{d} (\delta_{ij} - \omega_i \omega_j) \,\partial_{x_i} \Phi \,\partial_{x_j} f.
\end{align*}
The Gaussian integral formula yields $\| \omega \cdot \nabla_x f \|^2_{H}= \frac{1}{d}\, \| \nabla_x f \|^2_{H_\Phi}$, see \cite[Lem.~7.3]{GKMS12}. Define $\widetilde{f} \df f - \left(f,1\right)_{H_\Phi} \in D(G)$. We get
\begin{align*}
\| \omega \cdot \nabla_x f \|_{H}^2 = - \left( G f, f \right)_{H_\Phi} =  -\left( G \widetilde{f}, \widetilde{f} \right)_{H_\Phi} \leq \frac{1}{\Lambda_M} \| G \widetilde{f} \|_{H_\Phi}^2 = \frac{d}{\text{\normalfont{gap}}(G_{\Phi})} \| G f\|_H^2.
\end{align*}
This yields the estimate
\begin{align*}
\left\| LAPf \right\|_H \leq \,&\frac{\sqrt{d}\,(d-1)}{2\,\sqrt{\text{\normalfont{gap}}(G_{\Phi})}} \,\sigma^2 \,\|Gf\|_H  + \sum_{i,j=1}^{d} \left\| \partial_{x_i} \partial_{x_j} f \right\|_{H_\Phi}  + \frac{1}{d-1} \, \sum_{i,j=1}^{d} \left\| \left(\partial_{x_i}\Phi \right) \, \partial_{x_j} f \right\|_{H_\Phi}.
\end{align*}
By the Kato-bound provided in (C1) (see \eqref{Eq_Kato_bound_E3_fiber}) and the relation of $G$ with $G_\Phi$ from \eqref{Relation_G_G_psi_fiber_case} also the Kato-boundedness condition from (E2) is satisfied with the claimed values for $c_1$ and $c_2$ from the statement. 
\end{proof}

So, we are able to verify the ergodicity theorem with rate of convergence for the fiber lay-down dynamics.

\begin{proof}[Proof of Theorem \ref{Ergodicity_theorem_generalized_fiber_lay_down}] \phantomsection\label{proof_ergodicity_fiber}
We aim to apply Theorem \ref{Cor_main_ergodicity_theorem_with_rate_of_convergence}.  Now the dynamical system assumptions are established in Proposition \ref{Pp_(P)_implies_A_and_S_fiber_lay_down}. Uniqueness of the constructed law $\mathbb{P}=\mathbb{P}_{\mu_\Phi}$ associated with $(T_{t,2})_{t \geq 0}$ follows from Remark \ref{Rm_associatedness}. Condition (E1) is shown in Proposition \ref{Lm_Verifying_H1_fiber_lay_down}, (E2) and (E3) are shown in Proposition \ref{Lm_Verifying_H3_H4_fiber_lay_down} and (E4) is verified in Proposition \ref{Lm_verification_E3_fiber_lay_down}. Consequently, Theorem \ref{Cor_main_ergodicity_theorem_with_rate_of_convergence} implies ergodicity with rate of convergence. The quantitative description of the constants occurring in the rate of convergence are obtained by a straightforward calculation. Indeed, one only needs plugging the  constants $\Lambda_m$, $\Lambda_M$, $c_1$, $c_2$ and $c_3$ from the previous statements into the concrete rate predicted by Theorem \ref{Cor_main_ergodicity_theorem_with_rate_of_convergence}. Then $A(\Phi)$ and $B(\Phi)$ are calculated as
\begin{align} \label{Specification_of_mi}
A(\Phi) = \frac{2}{\sqrt{d-1}} \left(d \,K_1(d,\Phi) +1\right),\quad B(\Phi) = \frac{2\,d\, K_2(d,\Phi)}{\sqrt{d-1}}.
\end{align}
where $K_1(d,\Phi)$ and $K_2(d,\Phi)$ are the constants occurring in the Kato-bound from \eqref{Eq_Kato_bound_E3_fiber}. Finally, if $\Phi$ satisfies the stronger Assumptions (C1)-(C3) with the respective constants $\Lambda \in (0,\infty)$ and $c \in [0,\infty)$, then already by Lemma \ref{Lm_sufficient_criteria_implying_P_E_fiber_case} we obtain that $\Phi$ fulfills (C1)'-(C3)'. The statement $\text{\normalfont{gap}}(G_{\Phi}) \geq \Lambda$ is obvious. We remark that (C1)' only requires (C3), cf.~\cite[Lem.~4.8]{CG10}. And \cite[Lem.~4.8]{CG10} also shows that $K_1(d,\Phi)$ and $K_2(d,\Phi)$ then only depend on the value of $c$ and the dimension $d$. Thus $A(\Phi)$ and $B(\Phi)$ only depend on the value of $c$ and the dimension $d$. This finishes the proof.
\end{proof}

\begin{Rm}
Of course, as mentioned in \cite{CG10} (regarding the two-dimensional version of the fiber lay-down dynamics), more general assumptions on $\Phi$ may be allowed for covering also singular potentials analogously to the ones assumed for the $N$-particle Langevin dynamics from Section \ref{Results_Applications_N_particle_Langevin}. The construction scheme for the semigroup and the associated law then follows the scheme from \cite{CG10}. However, we are not interested in such a generalization for singular potentials for the fiber lay-down process and therefore do not discuss further details on this generalization.
\end{Rm}

\subsection*{Acknowledgment}
This work has been supported by Bundesministerium f\"ur Bildung und Forschung, Schwerpunkt \glqq Mathematik f\"ur Innovationen in Industrie and Dienstleistungen\grqq, Verbundprojekt ProFil, 05M10.

\end{document}